\newtheorem{lemma}{Lemma}[section]
\newtheorem{corollary}[lemma]{Corollary}
\newtheorem{definition}[lemma]{Definition}
\newtheorem{conjecture}[lemma]{Conjecture}
\newtheorem{remark}[lemma]{Remark}
\newtheorem{theorem}[lemma]{Theorem}
\theoremstyle{definition}
\numberwithin{equation}{section}
\newcommand{\R}{\mathbb R}
\newcommand{\N}{\mathbb N}
\newcommand{\eps}{\varepsilon}
\newcommand{\supp}{\operatorname{supp}}
\newcommand{\dif}{\mathrm{d}}
\newcommand{\beq}{\begin{equation}}
\newcommand{\eeq}{\end{equation}}
\newcommand{\beqq}{\begin{equation*}}
\newcommand{\eeqq}{\end{equation*}}
\newcommand{\ben}{\begin{eqnarray}}
\newcommand{\een}{\end{eqnarray}}
\newcommand{\beno}{\begin{eqnarray*}}
	\newcommand{\eeno}{\end{eqnarray*}}
\def \f{\frac}
\def\d{\delta}
\def\a{\alpha}
\def\e{\varepsilon}
\def\ld{\lambda}
\def\p{\partial}
\def\v{\varphi}
\theoremstyle{remark}
\numberwithin{equation}{section}
\newcommand{\F}{{\mathcal F}}
\newcommand{\ra}{{\textnormal{RapDec}}}
\begin{document}

	\def\d{\delta}
	\def\a{\alpha}
	\def\e{\varepsilon}
	\def\ld{\lambda}
	\def\p{\partial}
	\def\v{\varphi}
	
	\title[Square function estimates for FIO]{Square function estimates and Local smoothing for Fourier Integral Operators}
	\begin{abstract}
		We prove  a variable coefficient version of the square function estimate of Guth--Wang--Zhang. By a classical argument of Mockenhaupt--Seeger--Sogge, it implies the full range of sharp local smoothing estimates for $2+1$ dimensional Fourier integral operators satisfying the cinematic curvature condition. In particular, the local smoothing conjecture for wave equations on compact Riemannian surfaces is settled.
	\end{abstract}
		\author{Chuanwei Gao}
	\address{School of Mathematical Sciences, Capital Normal University, Beijing 100048, China}
	\email{cwgao@cnu.edu.cn}

	\author{Bochen Liu}
	\address{Department of Mathematics \& International Center for Mathematics, Southern University of Science and Technology, Shenzhen 518055, China}
	\email{Bochen.Liu1989@gmail.com}
	
	\author{Changxing Miao}
	\address{Institute for Applied Physics and Computational Mathematics, Beijing, China}
	\email{miao\textunderscore changxing@iapcm.ac.cn}
	\author{Yakun Xi}
	\address{School of Mathematical Sciences, Zhejiang University, Hangzhou 310027, PR China}
	\email{yakunxi@zju.edu.cn}
	
	\thanks{}
	\maketitle
	\maketitle
	\setcounter{tocdepth}{1}
	\tableofcontents
	\section{Introduction}
	\label{sect:introd}
	\subsection{Local smoothing conjectures}Suppose $(M,g)$ is a $d$-dimensional smooth Riemannian manifold without boundary, and $u(x,t)$ is a solution to the Cauchy problem
	\begin{equation}
	\label{eq:Wave}
	\begin{cases}
	(\partial_t^2-\Delta_g )\,u(x,t)= 0,\,(x,t)\in M\times \R\,\\
	u(0,x)=u_0(x),\ 
	\partial_tu(x,0)=u_1(x)\\
	\end{cases} ,
	\end{equation} where $\Delta_g$ denotes the associated  Beltrami--Laplacian operator. 
	
	When the underlying manifold is Euclidean, i.e., $M=\R^d$ with flat metric, Sogge \cite{Sogge91} conjectured that for $p\ge p_d= \frac{2d}{d-1}$, the space-time norm $\|u\|_{L^p(\R^d\times[1,2])}$ is bounded by suitable Sobolev norms of the initial data, which represents a gain in regularity compared to fixed time estimates.
	\begin{conjecture}
		[{\it Local smoothing in $\R^{d+1}$\rm}]\label{Conj. 1} Suppose $M=\R^d$ equipped with flat metric, $u$ is a solution to the Cauchy problem \eqref{eq:Wave}, and $s_p=\frac{d-1}2-\frac {d-1}p$. Then for any $p\ge p_d=\frac{2d}{d-1}$, $\sigma<\frac1p$, we have
		\beq
		\label{eq:Conj. 1}
		\|u\|_{L^p(\R^d\times[1,2])}\le C_\sigma(\|u_0\|_{L^p_{s_p-\sigma}(\R^d)}+\|u_1\|_{L^p_{s_p-1-\sigma}(\R^d)}).
		\eeq
	\end{conjecture}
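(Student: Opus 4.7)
The plan is to follow the classical reduction of Mockenhaupt--Seeger--Sogge, which deduces the full local smoothing estimate from an $L^{p_d}$ square-function estimate for the half-wave propagator localized to Whitney caps on the light cone, and then to invoke (for $d=2$) the Guth--Wang--Zhang square-function theorem for the cone in $\R^3$.

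First, writing $u(x,t)=\sum_{\pm} e^{\pm it\sqrt{-\Delta}}f_\pm(x)$ and applying a Littlewood--Paley decomposition, I would reduce \eqref{eq:Conj. 1} to a dyadic frequency-localized bound: for each $\lambda\gg 1$ and $p\ge p_d$,
\beq\label{FreqLoc}
\|e^{it\sqrt{-\Delta}}P_\lambda f\|_{L^{p}(\R^d\times[1,2])}\lesssim_\e \lambda^{s_p+\e}\|P_\lambda f\|_{L^{p}(\R^d)}.
\eeq
By interpolation with the trivial $L^2$ energy bound it suffices to establish \eqref{FreqLoc} at the endpoint $p=p_d=\tfrac{2d}{d-1}$.

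Second, I would perform a Whitney decomposition of the $\lambda$-frequency annulus into angular sectors of aperture $\lambda^{-1/2}$, writing $P_\lambda f=\sum_\nu f_\nu$ so that each piece $u_\lambda^\nu:=e^{it\sqrt{-\Delta}}f_\nu$ has space-time Fourier support in a $\lambda\times\lambda^{1/2}\times\cdots\times\lambda^{1/2}$-slab tangent to the light cone $\{\tau=|\xi|\}\subset\R^{d+1}$. The Mockenhaupt--Seeger--Sogge argument (pigeonholing on the set of caps contributing at each space-time point, then exploiting $L^2$-orthogonality and the finite-overlap structure of the slabs) reduces \eqref{FreqLoc} at $p=p_d$, up to an $\lambda^\e$ loss that exactly accounts for the $\sigma<1/p$ gap in the conjecture, to the square-function estimate
\beq\label{SqFn}
\Big\|\sum_\nu u_\lambda^\nu\Big\|_{L^{p_d}(\R^d\times[1,2])}\lesssim_\e \lambda^\e \Big\|\Big(\sum_\nu|u_\lambda^\nu|^2\Big)^{1/2}\Big\|_{L^{p_d}(\R^d\times[1,2])}.
\eeq

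Third, I would establish \eqref{SqFn} itself. For $d=2$ (so $p_d=4$), this is precisely the sharp $L^4$ square-function estimate for the truncated light cone in $\R^3$ proved by Guth--Wang--Zhang via polynomial partitioning, and combining it with the previous step completes the proof of Conjecture~\ref{Conj. 1} in that dimension. In higher dimensions $d\ge 3$ the analogous $L^{p_d}$ cone square-function estimate remains open, and this is where I expect the principal obstacle to lie: running a Guth--Wang--Zhang-style polynomial partitioning argument in $\R^{d+1}$ would require a suitable higher-dimensional Kakeya/Brascamp--Lieb input that is not presently available. As a weaker unconditional fallback, Bourgain--Demeter's $\ell^2$ decoupling for the cone substitutes for \eqref{SqFn} in the strictly narrower range $p\ge \tfrac{2(d+1)}{d-1}$, yielding Conjecture~\ref{Conj. 1} in that subrange for every $d\ge 2$.
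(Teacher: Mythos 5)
You are addressing a \emph{conjecture}: the paper offers no proof of it, and it remains open for $d\ge 3$; the paper only records that Guth--Wang--Zhang settled the case $d=2$. Your outline for $d=2$ is the standard and correct route, and it is the same architecture the paper adapts to the variable-coefficient setting: Littlewood--Paley reduction to a dyadic frequency-localized bound, angular decomposition into $\lambda^{-1/2}$-sectors, the Mockenhaupt--Seeger--Sogge reduction, and finally the Guth--Wang--Zhang $L^4$ square function estimate for the cone. One caveat on your second step: the MSS passage from the square function estimate back to the local smoothing bound is not only ``pigeonholing plus $L^2$-orthogonality'' --- after dualizing, it requires a Nikodym-type maximal function estimate over the $1\times\lambda^{1/2}\times\lambda$ tubes dual to the caps, together with a classical square function inequality for the angular pieces of $f$ itself; this is exactly what the paper's Appendix supplies in the FIO setting, quoting the maximal function theorem of \cite{MSSJ}. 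For $d=2$ both inputs are available, so your argument is complete in that dimension; for $d\ge 3$ the cone square function estimate at $p_d$ is open (and sharp Nikodym bounds are also problematic), as you correctly note, so the conjecture itself remains out of reach by this method.
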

	Here $s_p$ represents the order of regularity from the fixed time estimate, thus Conjecture \ref{Conj. 1} asserts that integrating in time provides a $\sigma<1/p$ order of  regularity gain for $p\ge p_d$.
	Conjecture \ref{Conj. 1}, if true, would be a remarkable result that would imply several major open problems in modern harmonic analysis, including the Bochner--Riesz mean, Fourier restriction, and Kakeya--Nikodym conjectures, all of which are wide open in dimensions $d\ge3$. See e.g., \cite{Tao99} for a detailed discussion. In a recent work of Guth--Wang--Zhang \cite{GWZ}, Conjecture \ref{Conj. 1} was verified for $d=2$. 
	
	The situation for a general compact Riemannian manifold $(M,g)$ is quite different. It was shown by Minicozzi and Sogge \cite{MS} (see also \cite{SXX}) that there exist compact manifolds $(M,g)$ which exhibit a certain Kakeya  compression  phenomenon that forbids favorable Kakeya-type estimates. Therefore, for such manifolds, local smoothing fails for all $\sigma<1/p$ if
	\beq
	 p<\bar p_{d,+}:=
	\begin{cases}
		\frac{2(3d+1)}{3d-3},\quad \text{if}\; d\; \text{is odd},\\
		\frac{2(3d+2)}{3d-2}, \quad \text{if}\; d\;\text{is even}.
	\end{cases}
	\eeq
	This result naturally leads to the following conjecture.
	\begin{conjecture}
		[{\it Local smoothing for compact manifolds\rm}]\label{Conj. 2} Let $(M,g)$ be a compact Riemannian manifold with $u$ being a solution to the Cauchy problem \eqref{eq:Wave}. For any $p\ge \bar p_{d,+}$, $\sigma<\frac1p$, we have
		\beq
		\label{eq:Conj. 2}
		\|u\|_{L^p(\R^d\times[1,2])}\le C_\sigma(\|u_0\|_{L^p_{s_p-\sigma}(\R^d)}+\|u_1\|_{L^p_{s_p-1-\sigma}(\R^d)}).
		\eeq
	\end{conjecture}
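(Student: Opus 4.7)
The plan is to reduce Conjecture~\ref{Conj. 2} (in the case $d=2$, which is what the paper announces to settle) to a variable coefficient square function estimate for Fourier integral operators satisfying the cinematic curvature hypothesis, and then to establish this square function estimate as a genuine analog of the Guth--Wang--Zhang theorem in the variable coefficient setting.

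\textbf{Step 1: Reduction to FIOs.} First I would use a standard parametrix construction to write the half-wave propagators $\me^{\mi t \lap{g}}$, microlocalized to a small conic neighborhood, as Fourier integral operators of the form
\beq
T_\lambda f(x,t) = \int \me^{\mi \v(x,t,\xi)} a(x,t,\xi) \widehat{f}(\xi) \, \ud \xi,
\eeq
where $\v$ is homogeneous of degree $1$ in $\xi$ and the symbol $a$ is supported where $|\xi| \sim \lambda$. The cinematic curvature condition, which for the half-wave equation amounts to non-vanishing of the Gaussian curvature of the cospheres, is inherited from the ellipticity and the geometry of $g$. A Littlewood--Paley decomposition then reduces local smoothing \eqref{eq:Conj. 2} to uniform-in-$\lambda$ estimates for each dyadic frequency $\lambda$.

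\textbf{Step 2: Mockenhaupt--Seeger--Sogge reduction to a square function inequality.} Following \cite{MS} in spirit, decompose the $\lambda$-piece into pieces $T_{\lambda,\theta}$ localized to $\lambda^{-1/2}$-sectors $\theta$ on the cosphere. The desired local smoothing bound at the critical exponent $p = \bar p_{d,+}$ is implied, via orthogonality and interpolation with trivial $L^2$ and $L^\infty$ estimates, by a square function inequality of the form
\beq
\label{eq:plan-sq}
\Bigl\| \sum_\theta T_{\lambda,\theta} f \Bigr\|_{L^p(\R^d \times [1,2])} \le C_\epsilon \lambda^\epsilon \Bigl\| \Bigl(\sum_\theta |T_{\lambda,\theta} f|^2 \Bigr)^{1/2} \Bigr\|_{L^p}.
\eeq
In dimension two this is the variable coefficient version of Guth--Wang--Zhang at $p=4$.

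\textbf{Step 3: The variable coefficient square function estimate.} The core of the argument is to prove \eqref{eq:plan-sq}. My strategy would be a multi-scale induction: at the finest scale $\lambda^{-1}$ the phase $\v$ is, after a suitable rescaling $x \mapsto \lambda^{-1/2} x$ etc., close enough to a quadratic (constant coefficient) phase that the plates of $T_{\lambda,\theta} f$ look like curved tubes whose directions form a $\lambda^{-1/2}$-separated set on a curved cone. One then transplants the Guth--Wang--Zhang incidence/broad-narrow analysis onto these tubes, using a wave packet decomposition adapted to the FIO and Taylor expanding $\v$ around the centre of each ball at scale $R$, with $R$ running through dyadic values up to $\lambda$. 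The key technical ingredient will be to show that error terms produced by freezing $\v$ at its second-order Taylor polynomial, and by replacing the curved plates by straight ones at an intermediate scale, are absorbable by the $\lambda^\epsilon$ loss.

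\textbf{Main obstacle.} The hardest part is not the reduction in Step~2, which is by now classical, but Step~3: the Guth--Wang--Zhang square function estimate relies on polynomial partitioning and a rather delicate high/low frequency decomposition on the cone, which both interact poorly with the non-translation-invariance of $T_\lambda$. In particular, cinematic curvature only controls the second fundamental form of the cospheres but the polynomial method exploits finer algebraic structure of the cone in $\R^{2+1}$. Handling this will likely require a careful broad-narrow induction on scales in which at each scale one approximates $\v$ by a constant coefficient phase, invokes the flat Guth--Wang--Zhang bound as a black box on balls of size depending on the bilinear/multilinear parameters, and sums the errors via a bootstrapping on $\epsilon$, ensuring that the $\lambda^\epsilon$ losses do not compound across scales.
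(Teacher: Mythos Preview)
Your Steps~1 and~2 are correct and match the paper: reduction to a dyadically localized FIO satisfying the cinematic curvature condition, followed by the Mockenhaupt--Seeger--Sogge argument reducing local smoothing at $p=4$ to the variable coefficient square function inequality \eqref{eq:plan-sq}. The reduction from the square function to the $L^4$ bound uses the Nikodym maximal estimate of \cite{MSSJ}, exactly as in the paper's Appendix.

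Step~3, however, has a genuine gap. You propose to close the induction by freezing $\varphi$ to its Taylor polynomial on balls of an intermediate scale and invoking the flat Guth--Wang--Zhang theorem as a black box, bootstrapping the errors. This is precisely the Beltran--Hickman--Sogge strategy that works for \emph{decoupling}, and the paper explicitly explains why it does \emph{not} suffice here: the GWZ induction is not a single-scale induction on $R$ but a two-parameter induction on $(r,R)$, carried by a constant $S(r,R)$ bounding an $\ell^4 L^2$-expression over balls $B_r$ by a multi-scale sum over boxes $U$ of dimensions $Rs^2\times Rs\times R$. There is no multiplicativity of the form $S(R)\le S(K)\,S(R/K)$ to exploit, so passing to a small scale $R\le \lambda^{1/2-\mu}$ where the phase is effectively flat only handles the regime where both $r$ and $R$ are small. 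You still need to control $S(r,R,\lambda)$ when $R$ is large (up to $\lambda^{1-\varepsilon}$) and $r$ is anywhere in $[1,R]$. The paper closes the induction with two further ingredients you are missing: a variable coefficient \emph{Kakeya-type estimate} (an analog of \cite[Lemma~1.4]{GWZ}, proved via a $\theta$-dependent change of variables that straightens the curved planks and allows the high/low decomposition on the moving cone) handling the case $r\ge R^{1/2+\varepsilon}$, and a variable coefficient \emph{Lorentz rescaling lemma} bridging the small-scale and Kakeya regimes through the intermediate scale $\lambda^{1/2-\mu}$.

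One factual correction: the GWZ argument does not use polynomial partitioning or a broad/narrow dichotomy. Its engine is the high/low frequency decomposition of $\sum_\theta |F_\theta|^2$ together with a planar Kakeya-type overlap estimate for the dual boxes; the paper's Section~\ref{S Kakeya} is devoted to reproducing exactly this mechanism in the variable coefficient setting, which is where the real work lies.
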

	Conjecture \ref{Conj. 2} is  open in all dimensions $d\ge2$.
	A classical way of attacking Conjecture \ref{Conj. 2} is to utilize a local parametrix to the Cauchy problem \eqref{eq:Wave} in terms of {\it Fourier integral operators}\footnote{See, e.g., \cite{Horm} for a thorough treatment for the basic theory of FIOs.} (FIOs).  In particular, it is known that
	\begin{equation}
	\label{eq:FIO}
	u(x,t)=\mathcal{ F}_0u_0(x,t)+\mathcal{F}_1u_1(x,t),
	\end{equation}
	where $\mathcal F_j\in I^{j-\frac14}(M\times \R,M;
	\mathscr C)$ are FIOs with canonical relation $\mathscr C$ satisfying the {\it cinematic curvature condition}. 	
	In local coordinates, a Fourier integral operator mapping functions on $\R^d$ to ones on $\R^{d+1}$ can be written as a finite sum of operators of the form
	\beq \label{eq:defFIO}
	\mathcal{F}f(z)=\frac{1}{(2\pi)^d}\int_{\R^d} e^{i\phi(z,\eta)}\,a(z,\eta)\, \hat{f}(\eta)\,d\eta,\quad z=(x,t)\in\R^{d+1},\ \eta\in \R^d,
	\eeq
	where $a\in S^{\sigma}(\R^{d+1}\times\R^d)$ is a symbol with compact support in $(x,t)$, and $\phi\in C^\infty(\R^{d+1}\times (\R^d\setminus \{0\}))$ is a phase function homogeneous of degree 1 in $\eta$. We say that $\mathcal F$ satisfies the (local) cinematic curvature condition if the following conditions are satisfied.
	
	\begin{itemize}
		\item{\bf Non-degeneracy condition.} \beq\label{CC1}{\rm rank} \;\partial_{x\eta}^2 \phi(x,t,\eta)=d\textnormal{ for all } (z,\eta)=(x,t,\eta)\in  {\rm supp}\;a.\eeq
		\item{\bf Cone condition.}  Consider the Gauss map $G: {\rm supp}\;a \rightarrow S^{d}$ by $G(z,\eta):=\frac{G_0(z,\eta)}{|G_0(z,\eta)|}$ where
		\beqq
		G_0(z,\eta):=\bigwedge^d_{j=1}\partial_{\eta_j}\partial_z\phi(z,\eta).
		\eeqq
		The curvature condition
		\beq\label{CC2}
		{\rm rank}\; \partial_{\eta\eta}^2\langle \partial_z\phi(z,\eta), G(z,\eta_0) \rangle|_{\eta=\eta_0}=d-1
		\eeq
		holds for all $(z,\eta_0)\in {\rm supp}\;a$.
	\end{itemize}
	
	Although we introduced the concept of FIOs in relation to Conjecture \ref{Conj. 2}, the numerology for their local smoothing conjectures is yet again different. It is shown in \cite{BHS} that examples constructed by Bourgain \cite{Bourgain91,Bourgain95} can be used to identify FIOs satisfying the cinematic curvature condition for which local smoothing fails for all $\sigma<1/p$, if 
	\beq
	 p<\bar{p}_{d}:=
	\begin{cases}
		\frac{2(d+1)}{d-1},\quad \text{if}\; d\; \text{is odd},\\
		\frac{2(d+2)}{d}, \quad \text{if}\; d\;\text{is even}.
	\end{cases}
	\eeq
	The local smoothing conjecture for FIOs is the following.
	\begin{conjecture}
		[{\it Local smoothing for FIOs\rm}]\label{Conj. 3} Let $\mathcal F$ be a FIO with symbol of order $\mu$ satisfying the cinematic curvature condition. For any $p\ge \bar p_{d}$, $\sigma<\frac1p$, we have
		\beq\label{eq:Conj. 3}
		\|\mathcal F f\|_{L^p(\R^{d+1})}\le C_\sigma\|f\|_{L^p_{\mu+s_p-\sigma}(\R^d)}.
		\eeq
	\end{conjecture}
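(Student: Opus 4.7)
The plan is to follow the classical strategy of Mockenhaupt--Seeger--Sogge (MSS), reducing Conjecture~\ref{Conj. 3} in dimension $d=2$ to a variable coefficient square function estimate in the spirit of Guth--Wang--Zhang (GWZ). First I would apply a Littlewood--Paley decomposition in frequency, splitting $\mathcal F=\sum_\lambda\mathcal F_\lambda$ into dyadic pieces whose symbols are supported at $|\eta|\sim\lambda$. By a standard parabolic rescaling and partition-of-unity reduction, it suffices to prove, for each $\lambda\gg 1$ and each $\e>0$, a bound of the form $\|\mathcal F_\lambda f\|_{L^p(\R^{d+1})}\lesssim \lambda^{\mu+s_p-1/p+\e}\|f\|_{L^p(\R^d)}$. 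The cinematic curvature condition then manifests, after rescaling, as the fact that the symbol is essentially supported in an $O(\lambda^{-1/2})$-neighborhood of a cone-like characteristic hypersurface in phase space.

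Next I would carry out the MSS reduction. Decomposing $\mathcal F_\lambda=\sum_\theta\mathcal F_{\lambda,\theta}$ into pieces supported in angular sectors $\theta$ of aperture $\lambda^{-1/2}$ around the characteristic variety, the MSS argument reduces the target bound at the critical exponent $p=\bar p_2=4$ to the square function estimate
\beqq
\|\mathcal F_\lambda f\|_{L^4(\R^3)}\le C_\e\lambda^\e\Big\|\Big(\sum_\theta|\mathcal F_{\lambda,\theta}f|^2\Big)^{1/2}\Big\|_{L^4(\R^3)}.
\eeqq
Exponents $p>4$ then follow from this endpoint, a trivial $L^\infty$ estimate, and interpolation, delivering the full range of sharp local smoothing bounds claimed in Conjecture~\ref{Conj. 3}.

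The main substantive task is therefore the variable coefficient square function estimate itself, i.e.\ a GWZ theorem where the cone is replaced by the curved characteristic variety of $\phi$. I would adapt the GWZ broad/narrow polynomial partitioning scheme to the FIO setting as follows. Introduce a wave-packet decomposition adapted to $\phi$, in which each wave packet is concentrated on a curved tube of dimensions $\lambda^{-1/2}\times\lambda^{-1/2}\times 1$ along a bicharacteristic of $\phi$. On the spatial side, run a broad/narrow dichotomy: on broad cells, where the significant tubes span transverse directions, combine a multilinear Kakeya/restriction estimate for curves with the variable coefficient $\ell^2$-decoupling theorem of Beltran--Hickman--Sogge; on narrow cells, iterate the square function estimate at a smaller scale inside each narrow sector. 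Polynomial partitioning of $\R^3$ by real algebraic varieties of controlled degree distributes the contribution among cells, while the tangential term is absorbed using the curvature hypothesis \eqref{CC2}, and the whole scheme closes by induction on the scale $\lambda$.

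The principal obstacle is the genuinely variable coefficient geometry. In GWZ the wave packets travel along straight lines in $\R^3$, which permits exact orthogonality and translation-invariant incidence estimates between tubes and algebraic varieties; here the tubes are curved, the wave packet frame is only approximately orthogonal, and polynomial partitioning must be performed after straightening via the Hamiltonian flow of $\phi$. Controlling the accumulated tangential errors across many scales of induction, and tracking the $\lambda^\e$-losses so that they do not degrade the sharp exponent, will be the most delicate step. Once the variable coefficient square function estimate is in hand, the MSS reduction in the second paragraph closes Conjecture~\ref{Conj. 3} for $d=2$, which in turn resolves Conjecture~\ref{Conj. 2} on compact Riemannian surfaces via the parametrix~\eqref{eq:FIO}.
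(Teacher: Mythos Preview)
Your reduction via Littlewood--Paley and the Mockenhaupt--Seeger--Sogge argument is correct and matches the paper (though the MSS step also requires the variable coefficient Nikodym maximal estimate from \cite{MSSJ}, which you elide). The gap is in your plan for the square function estimate itself.

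You describe the GWZ argument as a ``broad/narrow polynomial partitioning scheme'' and propose to adapt it by combining multilinear Kakeya, BHS decoupling on narrow cells, and polynomial partitioning. This mischaracterizes GWZ: their proof uses none of these tools. Broad/narrow and polynomial partitioning had been applied to cone problems before and do not deliver the sharp $L^4$ square function bound even for the flat cone; the whole point of \cite{GWZ} was a new mechanism that does. That mechanism is a two-scale induction on constants $S(r,R)$, closed by (i) a square function estimate for the parabola at tiny scales, (ii) a Kakeya-type estimate valid once $r\ge R^{1/2}$, and (iii) Lorentz rescaling to connect scales. The present paper adapts exactly this structure: it introduces $S(r,R,\lambda)$ uniform over all phases satisfying the cinematic curvature condition, proves a stability lemma reducing to flat GWZ when $R\le\lambda^{1/2-\mu}$, establishes a microlocal Kakeya-type estimate via a family of sector-adapted changes of variables $(x,t)\mapsto(u_\theta,t)$ that straighten the curved planks, and proves a variable coefficient Lorentz rescaling lemma so the induction closes through the intermediate scale $\lambda^{1/2-\mu}$. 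Your proposal contains none of this architecture, and the substitute you offer is not known to close even in the translation-invariant case.
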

	We remark that Conjecture \ref{Conj. 2} essentially corresponds to the sub-case of Conjecture \ref{Conj. 3} when the matrix in \eqref{CC2} is positive definite.	Recently, Conjecture \ref{Conj. 3} has been confirmed by Beltran--Hickman--Sogge \cite{BHS} for odd spatial dimensions $d$. Nevertheless, Conjecture \ref{Conj. 3} is open for all even values of $d\ge2$. 	
	\subsection{Main results}
	The purpose of this article is to confirm Conjecture \ref{Conj. 3} for $d=2$.
	\begin{theorem}
		[{\it Local smoothing for FIOs in $2+1$ dimensions\rm}]\label{main 1} Let $\mathcal F$ be a FIO with symbol of order $\mu$ satisfying the cinematic curvature condition. For any $p\ge \bar p_{2}=4$, $\sigma<\frac1p$, we have
		\beq
		\|\mathcal F f\|_{L^p(\R^{2+1})}\le C_\sigma\|f\|_{L^p_{\mu+s_p-\sigma}(\R^2)}.
		\eeq
	\end{theorem}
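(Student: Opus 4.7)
The plan is to follow the classical strategy of Mockenhaupt--Seeger--Sogge: reduce the local smoothing bound at the endpoint $p=\bar p_2=4$ to a square function estimate for $\mathcal F$, and then prove that square function estimate in the variable coefficient setting by extending the recent Guth--Wang--Zhang theorem from the flat light cone to FIOs satisfying the cinematic curvature condition.

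\textbf{Reduction via Mockenhaupt--Seeger--Sogge.} After a Littlewood--Paley decomposition in $\eta$ it suffices to bound, uniformly in dyadic $\lambda \gg 1$, the piece $\mathcal F_\lambda$ with $a$ supported where $|\eta|\sim \lambda$. Subdividing $\eta/|\eta|$ into angular sectors of aperture $\lambda^{-1/2}$ yields second-dyadic pieces $\mathcal F_\theta f$. The target estimate is
\begin{equation*}
\|\mathcal F_\lambda f\|_{L^4(\R^3)} \le C_\varepsilon\,\lambda^\varepsilon\, \Big\| \Big(\sum_\theta |\mathcal F_\theta f|^2\Big)^{1/2}\Big\|_{L^4(\R^3)}, \qquad \varepsilon>0.
\end{equation*}
Granting this, a standard application of Minkowski's and H\"older's inequalities, together with $L^2$ almost-orthogonality for the pieces $\mathcal F_\theta$ and interpolation with the trivial fixed-time $L^\infty$ bound, produces Theorem~\ref{main 1} on the full range $p\ge 4$, $\sigma<1/p$.

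\textbf{The variable coefficient square function estimate.} The main work is proving the displayed inequality for any FIO satisfying \eqref{CC1}--\eqref{CC2}. I would run an induction on the frequency scale $\lambda$ in the spirit of Guth--Wang--Zhang, combining polynomial partitioning with a broad/narrow dichotomy. On physical balls of radius $\lambda^{-1/2+\delta}$ the phase $\phi$ agrees with its osculating quadratic approximation to acceptable accuracy, so that each $\mathcal F_\theta f$ is, modulo rapidly decaying tails, a wave packet associated to a translate of the flat light cone and the constant coefficient GWZ estimate can be applied on that ball. The individual contributions are then glued using parabolic rescaling and a decoupling inequality for the curved surface associated with $\mathscr C$, which itself must be established inductively.

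\textbf{Main obstacle.} The principal difficulty is that, unlike for the flat cone, the wave packets of $\mathcal F_\theta$ propagate along the null bicharacteristics of $\mathscr C$ rather than along straight lines, so the polynomial partitioning and transversality arguments of GWZ---which rely on algebraic incidence estimates for lines in $\R^3$---do not transfer verbatim. To overcome this I would replace straight lines by families of approximately straight tubes at each dyadic spatial scale, with the curvature condition \eqref{CC2} supplying the quantitative transversality between tubes coming from distinct sectors needed to run the broad case. Controlling the compounding error terms across scales, and ensuring that the resulting losses remain of the form $\lambda^{O(\varepsilon)}$, is the most delicate bookkeeping in the argument; prior induction-on-scales schemes for variable coefficient operators---in particular those of Beltran--Hickman--Sogge in odd dimensions---suggest this is feasible, but extracting the exact geometric input from \eqref{CC1}--\eqref{CC2} to close the polynomial partitioning step in $2+1$ dimensions is the key nontrivial point.
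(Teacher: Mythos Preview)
Your reduction step is correct and matches the paper: the local smoothing estimate at $p=4$ follows from the square function inequality
\[
\|\mathcal F^\lambda f\|_{L^4(\R^3)} \le C_\varepsilon \lambda^\varepsilon \Big\|\Big(\sum_\theta |\mathcal F^\lambda_\theta f|^2\Big)^{1/2}\Big\|_{L^4(\R^3)}
\]
together with the Nikodym maximal function estimate of Mockenhaupt--Seeger--Sogge (which you fold into ``standard'' but is the actual content of that reduction).

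The gap is in your plan for the square function estimate. You propose to run polynomial partitioning with a broad/narrow dichotomy ``in the spirit of Guth--Wang--Zhang,'' but that is not how GWZ proceeds. Their argument is built on a \emph{stronger} two-scale estimate for constants $S(r,R)$, proved by induction on the ratio $R/r$ using three inputs: the square function estimate for the parabola at tiny scales, a Kakeya-type (wave envelope) inequality for $r\ge R^{1/2}$, and Lorentz rescaling to pass between scales. There is no polynomial partitioning and no algebraic incidence geometry in GWZ, so the obstacle you identify---that curved bicharacteristics prevent a direct transfer of line-incidence arguments---is an obstacle to a method that is not the right one to adapt.

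The paper follows GWZ's actual architecture in the variable coefficient setting, introducing constants $S(r,R,\lambda)$ and proving three lemmas: (i) a small-scale estimate (your osculating-cone observation, which is correct) giving $S(r,R,\lambda)\lesssim_\delta R^\delta$ for $R\le\lambda^{1/2-\mu}$ by invoking GWZ as a black box; (ii) a variable-coefficient Kakeya-type estimate handling $r\ge R^{1/2+\nu}$; and (iii) a variable-coefficient Lorentz rescaling lemma. The point that is genuinely new---and that your proposal misses---is that, unlike in the Beltran--Hickman--Sogge decoupling argument, the square function constant depends on \emph{two} physical scales, so one cannot simply reduce to a small scale and quote the Euclidean result; one must handle the regime $r\ll\lambda^{1/2}\ll R$ by passing through the intermediate scale $\lambda^{1/2-\mu}$ via Lorentz rescaling and closing the induction with the Kakeya input. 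Your sketch has the small-scale piece but neither the two-scale formulation nor the Kakeya/Lorentz machinery, and the proposed substitute (polynomial partitioning plus decoupling) would not close: decoupling alone is already known not to reach $p=4$, and you have not indicated how to carry out polynomial partitioning over curved tubes.
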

	We remark that $p_d\le\bar p_{+,d}\le \bar p_d$, while  $p_d=\bar p_{+,d}=\bar p_d$ if and only if $d=2$. Therefore, as a direct consequence of Theorem \ref{main 1}, we also confirm Conjecture \ref{Conj. 2} for $d=2$.
	
	\begin{corollary}
		[{\it Local smoothing for compact surfaces\rm}]\label{main 2} Let $(M,g)$ be a compact Riemannian surface and $u$ be a solution to the Cauchy problem \eqref{eq:Wave}. For any $p\ge \bar p_{2,+}=4$, $\sigma<\frac1p$, we have
		\beq
		\|u\|_{L^p(M\times[1,2])}\le C_\sigma(\|u_0\|_{L^p_{s_p-\sigma}(M)}+\|u_1\|_{L^p_{s_p-1-\sigma}(M)}).
		\eeq
	\end{corollary}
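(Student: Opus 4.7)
The plan is to reduce Corollary~\ref{main 2} to Theorem~\ref{main 1}, following the author's remark that Conjecture~\ref{Conj. 2} corresponds to the sub-case of Conjecture~\ref{Conj. 3} in which the matrix in \eqref{CC2} is positive definite. First I would use a finite partition of unity on the compact surface $M$, subordinate to an atlas of coordinate charts, and a dual cutoff to reduce to data supported in a single chart, identifying it with a compactly supported piece in $\R^2$. A standard Littlewood--Paley decomposition then peels off the low-frequency contribution, which is controlled by Sobolev embedding applied to the fixed-time energy estimate, and leaves a countable sum of dyadic pieces on which the half-wave calculus is available.

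On each dyadic shell I would write the solution using the half-wave decomposition $u=\frac{1}{2}(e^{it\lap{g}}+e^{-it\lap{g}})u_0 + \frac{1}{2i\lap{g}}(e^{it\lap{g}}-e^{-it\lap{g}})u_1$, and invoke the classical Lax/Hadamard parametrix to realize each $e^{\pm it\lap{g}}$, localized to the chart, as a sum of FIOs of the form \eqref{eq:defFIO} plus a smoothing error. In local coordinates one may take the phase to be $\phi(x,t,\eta)=\psi(x,t,\eta)$, where $\psi$ solves the eikonal equation $\partial_t\psi=\pm p(x,\nabla_x\psi)$ with $\psi(x,0,\eta)=\langle x,\eta\rangle$ and $p(x,\eta)=\sqrt{g^{ij}(x)\eta_i\eta_j}$ is the principal symbol of $\lap{g}$; to first order in $t$ one has $\psi=\langle x,\eta\rangle\pm t\,p(x,\eta)$. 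The amplitudes are classical symbols of order $0$ in the $u_0$ branch and order $-1$ in the $u_1$ branch, compactly supported in $z$ after multiplication by the chart cutoff.

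The main verification is that these FIOs satisfy the cinematic curvature condition. The non-degeneracy \eqref{CC1} is immediate, since $\partial_{x\eta}^2\phi$ equals the identity at $t=0$ and therefore has full rank $d=2$ for $|t|$ sufficiently small on the support of the amplitude. For the cone condition \eqref{CC2}, a direct computation shows that, up to a nonvanishing scalar factor coming from the normalization of $G$, the Hessian $\partial_{\eta\eta}^2\langle\partial_z\phi,G(z,\eta_0)\rangle|_{\eta=\eta_0}$ reduces to the Hessian of $p(x,\cdot)$ restricted to the hyperplane tangent to the cosphere $\{p(x,\eta)=p(x,\eta_0)\}$ at $\eta_0$. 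Since $p$ is positively homogeneous of degree $1$ and $g^{ij}(x)$ is positive definite, this restricted Hessian is positive definite, hence of rank $d-1=1$, and the cone condition holds.

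With cinematic curvature verified, Theorem~\ref{main 1} applies to each localized piece: with $\mu=0$ on the $u_0$ contribution it gives $\|\mathcal F_0 u_0\|_{L^p(\R^{2+1})}\le C_\sigma\|u_0\|_{L^p_{s_p-\sigma}}$, and with $\mu=-1$ on the $u_1$ contribution it gives $\|\mathcal F_1 u_1\|_{L^p(\R^{2+1})}\le C_\sigma\|u_1\|_{L^p_{s_p-1-\sigma}}$. Summing over the finite atlas, handling the smoothing remainder from the parametrix by a trivial $L^p$ bound, and combining with the low-frequency estimate yields Corollary~\ref{main 2}. The only genuinely substantive step is the curvature computation sketched above; everything else is routine microlocal bookkeeping of a type standard in the local smoothing literature.
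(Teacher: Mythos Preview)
Your proposal is correct and follows the same route the paper takes: the paper does not give a separate proof of Corollary~\ref{main 2} at all, but simply notes (around \eqref{eq:FIO}) that the wave parametrix $u=\mathcal F_0 u_0+\mathcal F_1 u_1$ is a sum of FIOs satisfying the cinematic curvature condition, observes that $\bar p_{2,+}=\bar p_2=4$, and declares the corollary a direct consequence of Theorem~\ref{main 1}. Your write-up spells out exactly the standard microlocal reductions (atlas, Littlewood--Paley, half-wave parametrix, verification of \eqref{CC1}--\eqref{CC2}) that the paper leaves implicit, so there is no divergence of method to discuss.
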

	Let us take a moment to briefly survey the history of this problem. The study of Conjecture \ref{Conj. 1} in $2+1$ dimensions dates back to \cite{Sogge91} where it was first introduced. In the same article, Sogge also established the first local smoothing estimate with a nontrivial gain of regularity at the endpoint $p=4$. This result was improved by Mockenhaupt, Seeger and Sogge \cite{MSSA}, while no estimate with a sharp gain of regularity was established. Further improvements can be found in \cite{TV00, Wolff01, Lee16}. The seminal paper of Wolff \cite{Wolff} introduced an important tool now known as  {\it decoupling inequality} for the cone, and used it to obtain sharp local smoothing estimates for $p\ge 74$. The methods of Wolff were further developed and extended  by several authors (\cite{LW, GS09, GS10, Bourgain13}). In \cite{BD}, Bourgain and Demeter proved a sharp decoupling inequality for the cone for all dimensions $d\ge2$, which naturally leads to local smoothing estimates in $\R^{d+1}$. In particular, when $d=2$, their results imply sharp local smoothing estimates for all $p\ge6$. On the other hand, a certain type of {\it square function estimate} is shown to imply local smoothing results by \cite{MSSA}. Following this route, Lee and Vargas \cite{LV} proved sharp local smoothing estimates for $p=3$ by proving a sharp square function estimate using multilinear restriction estimates.  As mentioned earlier, the recent work of Guth, Wang and Zhang \cite{GWZ} settled Conjecture \ref{Conj. 1} for $d=2$ completely, in which the authors established a sharp $L^4$ square function estimate. 
	
	Conjecture \ref{Conj. 2} and \ref{Conj. 3} were studied in parallel to the developments of Conjecture \ref{Conj. 1} \cite{MSSJ, LS}. 	As mentioned above, Beltran, Hickman and Sogge \cite{BHS} established Conjecture \ref{Conj. 3} for odd spatial dimensions by proving a variable coefficient analog of Bourgain--Demeter decoupling inequality. In their proof, the authors exploited the multiplicativity of the decoupling constants and reduced the problem to proving a decoupling inequality at small scales via an induction on scales argument. At small enough scales, such a decoupling inequality follows directly from its Euclidean counterpart. More recent partial results can be found in \cite{GMY1, GMY2}.
	See the excellent survey article by Beltran, Hickman and Sogge \cite{BHSSurvey} for a detailed discussion of all three conjectures. 
	
	As discussed in \cite{GWZ}, the sharp decoupling inequality for the cone does not imply a full range of sharp local smoothing estimates for Euclidean wave equations. This is one of the reasons why the authors resort to proving sharp square function estimates instead.   Similarly, the variable coefficient decoupling inequalities of Beltran--Hickman--Sogge \cite{BHS} do not fully resolve Conjecture \ref{Conj. 3} in even spatial dimensions.  In this paper, Theorem \ref{main 1} will be proven as a consequence of a variable coefficient generalization of the square function estimates of Guth--Wang--Zhang, which we shall describe in the next subsection. 
	
	\subsection{Square function estimates for FIOs}	The decomposition associated with our square function estimates is classical and the same as that in \cite{MSSJ},  \cite{BHS}. Consider a Fourier integral operator $\mathcal F$ of the form \eqref{eq:defFIO} satisfying the cinematic curvature condition (that is, \eqref{CC1} and \eqref{CC2}) with a symbol of order $0$ and $d=2$. Fix $\lambda\gg 1$. By a standard Littlewood--Paley decomposition,
	one may reduce Theorem \ref{main 1} to proving that for any $\eps>0$,
	\beq \label{eq:34}
	\|\mathcal F^\lambda f\|_{L^4(\R^3)}
	\leq C_\varepsilon \lambda^{\frac14+\varepsilon}\|f\|_{L^4{(\R^2})}, \quad
	\eeq
	where
	$\mathcal F^\lambda $ is defined as
	\begin{equation}
	\label{mklsmcd}
	\mathcal{F}^\lambda f(x,t):=
	\int
	e^{i\phi^\lambda(x,t,\eta)}
	a^\lambda (x,t,\eta)\hat{f}(\eta)\,\dif\eta,
	\end{equation}
	and 
	\beq
	\phi^\lambda(x,t,\eta):=\lambda\phi(x/\lambda,t/\lambda,\eta),\ a^\lambda(x,t,\eta):=  a(x/\lambda,t/\lambda,\eta).
	\eeq
Here  and throughout, we use the convention that $(x,t)$ are the physical variables and $\eta$ are the frequency variables. We may assume that the physical support of $a(x,t,  \eta)\in C_c^\infty(\R^3\times \R^2)$ is small, and its frequency support is near ${\bf e_2}$, the basis vector $(0,1)\in\R^2$. 
	
	Let  $1\leq R\leq \lambda$. We decompose the $\eta$-support of $a^\lambda$ angularly by cutting
	$S^1$ into
	$\approx R^{\frac{1}{2}}$ many arcs $\theta$,
	each  $\theta$  spreading an angle  $\approx R^{-1/2}$. Let $\{\chi_{\theta}\}_\theta\subset C^\infty(\R^2\setminus 0)$ be a family of smooth  cutoff functions associated with the decomposition in the angular direction,
	each of which is  homogeneous of degree $0$,
	such that $\{\chi_\theta\}_{\theta}$ forms a partition  of unity on $\R^2\backslash 0$. So we have
	\begin{equation}
	\sum_{\theta} \chi_{\theta}(\eta)\equiv 1,\;\;\forall \eta \in \mathbb{R}^2\setminus 0.
	\end{equation}
	
	Define
	\begin{align} \label{eq:200a}
 \mathcal{F}^\lambda_\theta f(x,t)
	:=\int_{\R^2}
	e^{i\phi^\lambda(x,t,\eta)}
	a^{\lambda}_\theta(x,t,\eta)\hat{f}(\eta)\,d\eta,
	\end{align}
	where the amplitude reads
	\beq a^{\lambda}_\theta (x,t,\eta):= a(x/\lambda,t/\lambda,\eta)\chi_\theta(\eta).
	\eeq
	Thus,
 $$\mathcal{F}^\lambda f=\sum_\theta \mathcal{F}^\lambda_\theta f.$$
	Now we are ready to state our square function estimates for FIOs.
	\begin{theorem}[Square function estimates for FIOs] \label{sq} Suppose that $\mathcal F^\lambda$ is an operator satisfying the assumptions above. Take $R=\lambda$. Then for any $\eps>0$, $N\in\N$, there exists constants $C_{\eps},C_N>0$ such that
		\begin{equation}\label{eq-sq}
		\|\mathcal{F}^\lambda f\|_{L^4(\R^3)}\leq C_{\eps} \lambda^\varepsilon  \Big\|\Big(\sum_{\theta}|\mathcal{F}^\lambda_\theta f|^2\Big)^{1/2}\Big\|_{L^4(\R^3)}+C_N\lambda^{-N}\|f\|_{L^4(\R^2)}.
		\end{equation} 
		
	\end{theorem}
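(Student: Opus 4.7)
The plan is to prove Theorem \ref{sq} by a one-step reduction to the constant coefficient square function estimate of Guth--Wang--Zhang \cite{GWZ} via spatial localization and a first-order Taylor expansion of the phase. As preparation, using the cinematic curvature condition \eqref{CC1}--\eqref{CC2} and standard changes of variables, I arrange that at each point $z_0$ in the support of $a$ the tangent cone $\{\partial_z\phi(z_0,\eta):\eta\text{ near }\mathbf{e}_2\}$ is a small $C^\infty$ perturbation of the standard light cone in $\R^3$, with Gaussian curvature bounded above and below uniformly.

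Next, I partition $\R^3$ into essentially disjoint balls $B$ of radius $r = \sqrt{\lambda}/K$ for a large but fixed constant $K$. On each ball with center $z_B$, a Taylor expansion of $\phi^\lambda(z,\eta) = \lambda\phi(z/\lambda,\eta)$ in $z$ yields
\[
\phi^\lambda(z,\eta) - \phi^\lambda(z_B,\eta) - \partial_z\phi^\lambda(z_B,\eta)\cdot(z-z_B) = O(r^2/\lambda) = O(K^{-2}),
\]
small enough to be absorbed into the amplitude via a Taylor expansion of the exponential; similarly, the slow variation of $a^\lambda(z,\eta) = a(z/\lambda,\eta)$ across $B$ (of relative size $r/\lambda = 1/(K\sqrt\lambda)$) is negligible. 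Consequently, on $B$ the operator $\mathcal F^\lambda$ agrees (modulo rapidly decaying errors) with a constant coefficient Fourier multiplier whose output has frequency support in an $O(1)$-neighborhood of the tangent cone $\{\partial_z\phi^\lambda(z_B,\eta)\}$ at scale $\lambda$, and the angular pieces at angle $\lambda^{-1/2}$ match the $\mathcal F^\lambda_\theta f$ modulo similarly small spillage from the spatial cutoff. Applying the constant coefficient GWZ theorem to this linearization (with uniform constant across $B$ by the uniform curvature control) and summing the resulting local $L^4$ estimates over the disjoint balls via $\|g\|_{L^4(\R^3)}^4 = \sum_B\|g\|_{L^4(B)}^4$ yields \eqref{eq-sq}.

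The main technical obstacle is to execute this localization cleanly: one must simultaneously control the phase linearization error, the slow amplitude variation across $B$, and --- most delicately --- the angular spillage induced by the spatial cutoff, verifying that restricting $\mathcal F^\lambda_\theta f$ to $B$ does not leak significantly into neighboring sectors $\theta'$, so that the square function structure on the right-hand side is genuinely preserved by the localization. Uniformity of the GWZ constant over the family of tangent cones at different centers $z_B$ is another essential input, following from the uniform cinematic curvature but requiring careful bookkeeping. Should these direct localization estimates prove too delicate to close in a single step, a backup plan is an induction-on-scales bootstrap analogous to the variable coefficient decoupling framework of Beltran--Hickman--Sogge \cite{BHS}: apply constant coefficient GWZ at an intermediate scale $R \leq \lambda$, identify each coarse sector after parabolic rescaling with an FIO at scale $\lambda/R$ satisfying the cinematic curvature condition with uniform constants, and iterate the inductive hypothesis to produce the finer $\lambda^{-1/2}$ decomposition.
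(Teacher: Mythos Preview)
Your main plan has a genuine gap. Localizing to a ball $B$ of radius $r=\sqrt{\lambda}/K$ blurs the Fourier support of $w_B\mathcal{F}^\lambda f$ to a $\max(r/\lambda,\,1/r)\approx K\lambda^{-1/2}$-neighborhood of the tangent cone $C_{z_B}$, not a $\lambda^{-1}$-neighborhood. In the GWZ framework the admissible angular aperture is the square root of the normal thickness, so on $B$ one may only invoke the constant-coefficient estimate at scale $\approx\lambda^{1/2}$, producing caps of aperture $\approx\lambda^{-1/4}$:
\[
\|\mathcal{F}^\lambda f\|_{L^4(B)}\lesssim_\eps \lambda^\eps\Big\|\Big(\sum_{{\rm d}(\tau)\approx\lambda^{-1/4}}|\mathcal{F}^\lambda_\tau f|^2\Big)^{1/2}\Big\|_{L^4(w_B)}+\text{error}.
\]
This is only half of the job; passing from $\lambda^{-1/4}$-caps to $\lambda^{-1/2}$-caps is precisely the remaining difficulty. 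The paper records exactly this limitation as Lemma~\ref{small scale}, which controls $S(r,R,\lambda)$ only in the regime $R\le\lambda^{1/2-\mu}$.

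Your backup is closer but still misses a key ingredient. A BHS-style single-scale induction works for decoupling because decoupling constants multiply: $D(R,\lambda)\le D(K,\lambda)\cdot D(R/K,\lambda/K)$. Square function constants do not enjoy such a clean multiplicative relation, which is why Guth--Wang--Zhang introduced the two-parameter constant $S(r,R)$ together with the auxiliary sums over boxes $U$. The paper follows this route with a three-parameter constant $S(r,R,\lambda)$ and inducts on the ratio $R/r$. Besides the small-scale input (Lemma~\ref{small scale}) and a variable-coefficient Lorentz rescaling (Lemma~\ref{Lorentz}), an essential third input is a \emph{variable coefficient Kakeya-type estimate} (Lemma~\ref{Kakeya}) handling the regime $r\ge R^{1/2+\nu}$. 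Without this Kakeya input the induction cannot close: when both $r$ and $R$ exceed $\lambda^{1/2-\mu}$, neither the small-scale reduction nor rescaling alone produces a gain, and your proposal does not supply a substitute.
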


\begin{remark} \label{general cone remark} Theorem \ref{sq} should be viewed as a variable coefficient generalization of \cite[Theorem 1.1]{GWZ}, which covers the phase function $\phi(x,t,\xi)=x\cdot\xi+t|\xi|$. { Nevertheless, it is not hard to see that a slight modification of their argument implies the same estimate for phase functions of the form $\phi(x,t,\xi)=x\cdot \xi+tp(\xi)$, for any function $p\in C^\infty(\R^2\setminus \{0\})$ homogeneous of degree 1 satisfying ${\rm rank}\,\partial_{\xi\xi}^2 p(\xi)=1$ on the support of the amplitude. In fact, one can see that the only place where the properties of a circular cone are used is  \cite[Lemma 4.2]{GWZ}. We include a proof of this lemma for the general cone case in the Appendix.} In our case, the function $p$ will be allowed to smoothly depend on $x$ and $t$, and thus the cone in the phase space could also vary smoothly from point to point. 
\end{remark}
	It is standard to deduce Theorem \ref{main 1} from Theorem \ref{sq} and a sharp Nikodym maximal function estimate, as demonstrated in \cite{MSSJ}; for readers’ convenience, we include a proof in the Appendix.
	It would be interesting to see if one can prove a higher dimensional analog of Theorem \ref{sq}. Nevertheless, for $d\ge3$, it is  unclear whether we should expect the same conjectured threshold, $\frac{2d}{d-1},$ as in the Euclidean version of this conjecture. Indeed, even though the critical exponents in Conjecture \ref{Conj. 1}, \ref{Conj. 2}, \ref{Conj. 3} are all different for $d\ge3$, there seems to be no argument that directly translates the Kakeya compression phenomena for FIOs to counterexample of square function estimates in this setting. It would be interesting to see if such examples can be constructed.
	
	\subsection{Strategy of Proof}
	It is shown in \cite{BHS} (see also \cite{ILX}) that a variable coefficient decoupling inequality can be deduced from its translation-invariant counterpart by reducing it to a small-scale version through a ``parabolic rescaling and induction on scales" argument. In \cite{GWZ}, the proof of the sharp square function estimate for the cone in $\R^{2+1}$ is also built upon an induction on scales argument, which utilizes a parabolic rescaling type argument. Thus, it might be tempting to think that one should be able to prove Theorem \ref{sq} by reducing matters to small scales and then use the square function estimate of Guth--Wang--Zhang as a black box. Unfortunately, there are significant differences between these two induction on scales arguments which seem to forbid one from doing so. 
	
	In \cite{BHS}, the authors considered ``decoupling constants'' $D(R,\lambda)$ with $\lambda$ being the scale associated with $\mathcal F^\lambda$. Their main result, a variable coefficient decoupling inequality, corresponds to the estimate $D(\lambda^{1-\eps},\lambda)\le C_\eps \lambda^\eps$. The authors showed that at a tiny scale $K\ll\lambda^{\frac12-\eps}$, one could bound $D(K,\lambda)$ favorably by invoking Bourgain--Demeter and a stability lemma. In addition, the authors used a parabolic rescaling argument to bound $D(R,\lambda)$ by $D(K,\lambda)\cdot D(R/K,\lambda/K)$, which allowed them to induct on $R$. This idea was later used in \cite{ILX}, while the argument is completely independent.
	
	In earlier works (see, e.g., \cite{LV,Lee16}), induction on scales arguments were used to prove square function estimates. However, it often seems inefficient to do an induction with respect to a single physical scale. Indeed, Guth, Wang and Zhang formulated stronger square function estimates \cite[Theorem 1.3]{GWZ}, which work better with induction on scales. In their new scheme, two different scales $r\le R$ are involved in the induction, and the ``square function constant'' $S(r,R)$ depends on both $r$ and $R$. The desired square function estimate corresponds to the inequality $S(1,R)\le C_\eps R^\eps$.  The authors then induct on the quotient $\frac Rr$ rather than on $R$ itself.  This formulation is one of the key novel contributions of their work. To be more specific, there are three crucial inputs fed into the induction. The first one is a square function estimate for the parabola, which is used to control $S(r,K)$ for some large constant  $K=K(\eps)$ (for a $K^{-1}$-chunk of the cone). The second one is a Kakeya-type estimate, which controls $S(r,R)$ for $r\ge\sqrt R$. The last ingredient,  Lorentz rescaling, can be used to bound $S(r,R)$ essentially by $S(r,r')\cdot \sup_{s\in[0,1]}S(sr',sR)$ with an intermediate scale $r'$. This allows one to induct on the ratio $R/r$. There are two cases for each step of the induction.  If $r\le K^\frac12$, one goes through $r'=K$ and uses the square function estimate for parabola; if $K^\frac12\le r\le R^\frac12$, one goes through $r'=r^2$ and applies the Kakeya-type estimate to produce a gain. 
	
	Our strategy of proving Theorem \ref{sq} is inspired by all previous work \cite{BHS}, \cite{ILX}, \cite{GWZ}. We shall consider square function constants of the form $S(r,R,\lambda)$, with $\lambda$ being the ambient scale of the FIO. There are also three inputs in our proof. The first one is a small-scale estimate. In fact, if $r\le R\le\lambda^{\frac12-\eps}$, the problem looks almost identical to the translation-invariant case at such a small scale, and therefore we can invoke Guth--Wang--Zhang. The second one is a variable coefficient version of the Kakeya-type estimate, which allows us to control $S(r,R,\lambda)$ for $r\ge R^{\frac{1}{2}+\eps}$. In our case, many key tools available in Euclidean harmonic analysis are absent. We hence develop a new strategy in which we play with multiple different coordinate systems and take advantage of all of them. The authors believe that this strategy is novel and interesting in its own right. With these two estimates in hand, we can bound $S(r,R,\lambda)$ favorably if both $r, R$ are larger or smaller compared to $\lambda^{\frac12-\eps}$. Now we are left with the case when $r\ll \lambda^{\frac12-\eps}\ll R$. We then use a variable coefficient version of the Lorentz rescaling lemma to go through the intermediate scale $\lambda^{\frac12-\eps}$ and close the induction. 
	
	Our paper is organized as follows. In Section \ref{S setup}, we introduce basic reductions and state our main estimate Theorem \ref{iteration}. In Section \ref{S proof}, we present an induction on scale argument and reduce the proof of Theorem \ref{iteration} to the proof of three lemmas. In Section \ref{S geometry}, we collect some geometric facts tied to an important  change of variable map that will become handy in our proof of the lemmas. In the last three sections, we prove our lemmas.
	
	{\bf Acknowledgement.} This project was supported by the National key R\&D program of China No. 2022YFA1005700 and 2022YFA1007200. C. Gao was supported by Chinese Postdoc Foundation Grant No. 8206300279. B. Liu was supported by SUSTech start-up counterpart Y01286235. C. Miao was partially supported by NSF China grant No.11831004.  Y. Xi was partially supported by NSF China grant
	No. 12171424 and the Fundamental Research Funds for the Central Universities 2021QNA3001. The authors would like to thank Ruixiang Zhang for suggesting this problem. The authors would like to thank Larry Guth and Hong Wang for some helpful comments. The authors would like to thank the anonymous referees for their valuable comments and suggestions, which have greatly improved the presentation of this paper. 
	\subsection{Notation}
	
	{ For any arc $\theta\subset S^1$, we use ${\rm d}(\theta)$ to denote its aperture and $\eta_\theta\in S^1$ to denote its center.   For each aperture $s>0$, we fix a collection $\mathbf S_s$ of $10s^{-1}$ arcs to form a covering of $S^1$ with bounded overlaps. }
	
	For $R\geq 1$, $B_R(z)$ denotes a ball centered at $z$ with radius $R$. We abbreviate $B_R=B_R(z)$ when the center is not of particular interest. { We use $w_{B_R}$ to denote a Schwartz weight function that is adapted to $B_R=B_R(z_0)$ so that $w_{B_R}\ge1$ on $B_R$ and decays rapidly away from it.}

	For non-negative quantities $A$ and $B$, we shall write $A\lesssim B$, if there is an absolute constant $C>0$ such that $A\le CB$. 
	We shall write $A\approx B$, if $\frac1C B\le A\le CB$ for some absolute constant $C>1$. Suppose $A(R)$ and $B(R)$  are now functions of taking non-negative values. We write $A(R)\lessapprox_\eps B(R)$, if there exists a constant $C_\eps$ depending on $\eps$, such that for all $R\ge 1$, we have $A(R)\le C_\eps R^\eps B(R).$ 
	
	We shall use $\ra(R)$ to denote a term that is rapidly decaying in $R>1$, that is, for any $N\in\N$, there exists a constant $C_N$ such that $|\ra (R)|\le C_N R^{-N}$.

	For a vector-valued function $F=(f_i)$, we denote its Jacobian matrix by $\partial_x F:=\frac{\partial F}{\partial x}=\left(\frac{\partial f_i}{\partial x_j}\right)_{ij}$. In particular, if $F$ is real-valued, $\nabla_x F=\partial_{x} F$ is a row vector.

	{  We shall say that a function is essentially supported in a set 
$S$ if the sup norm and $L^1$ norm of this function outside of $S$ are $\ra(R)$ with respect to the underlying parameter $R$.}

	We use $\langle x,y \rangle$ to denote the inner product of vectors. When there is no confusion, we also write $x\cdot y:=\langle x, y\rangle$.
	
	\section{Setup and Main Estimates}
	\label{S setup}
	Given $\lambda\gg 1$, we consider  $\mathcal F^\lambda f=\sum_\theta \mathcal F^\lambda_\theta f$  as defined in \eqref{eq:200a}. For simplicity, we write $$F:=\mathcal F^\lambda f,\, F_\theta:=\mathcal F^\lambda_\theta f.$$  Our goal is to establish the estimate \eqref{eq-sq}, which says
	\begin{equation}\label{eq-sq'}
	\|F\|_{L^4(\R^3)}\leq C_\varepsilon \lambda^\varepsilon  \Big\|\Big(\sum_{\theta}|F_\theta |^2\Big)^{1/2}\Big\|_{L^4(\R^3)}+C_N\lambda^{-N}\|F\|_{L^4(\R^2)}.
	\end{equation}  The reader should keep in mind that our $F$ and $F_\theta$ do not enjoy the Fourier support assumptions in \cite{GWZ}. In fact, they can be categorized by their {\it microlocal support} in the following sense.
{ 
\begin{definition}

  We say a smooth function $F: B_\lambda\rightarrow \R$ has microlocal support essentially contained in the $\lambda^{-1}$-neighborhood of a set $S\subset B_\lambda\times\R^3$,  if for any $0<\varepsilon<1$, $1\le R\le \lambda^{1-\varepsilon}$, and any test function $\psi^\lambda(z,\xi):=\psi(z/\lambda,\xi)$ which equals 1 on the $R^{-1}$-neighborhood of $S$, we have
	$$\frac{1}{(2\pi)^3}\iint_{\mathbb R^3\times\mathbb R^3} e^{ i(w-z)\cdot\xi}\, (1-\psi^\lambda(z, \xi))\, F(z)\,dz\,d\xi = \ra(R)||F||_1.$$
	In addition, if $F$ has microlocal support essentially contained in the $\lambda^{-1}$-neighborhood of $S$, we shall say that the microlocal support of $F$ at $z$ is essentially contained in the $\lambda^{-1}$-neighborhood of the set
	$$S_z:=\{\xi:(z,\xi)\in S\}.$$ 
\end{definition}
}
 Now, for $F=\mathcal F^\lambda f$, the relevant integral is the following.
 \[\frac{1}{(2\pi)^3}\iint_{\mathbb R^3\times\mathbb R^3\times\mathbb R^2} e^{ i[(w-z)\cdot\xi+\phi^\lambda(z,\eta)]}\, (1-\psi^\lambda(z, \xi))\,a^\lambda (z,\eta)\hat{f}(\eta)\,dz\,d\xi
	\, d\eta. \]
 A simple integration by parts argument in $z$ variables yields that $F=\mathcal F^\lambda f$ has microlocal support essentially contained in the $\lambda^{-1}$ neighborhood of the set
 	\begin{equation}\label{C}
 	\{(z,\xi): \xi=\nabla_z\phi^\lambda(z,\eta),(z,\eta)\in {\rm supp}\, a^\lambda({}\cdot{},{}\cdot{})\}.
 \end{equation}
Then the microlocal support of $F$ at $z$ is essentially contained in a $\lambda^{-1}$ neighborhood of the cone
	\begin{equation}\label{C_z}
	C_{z}:=\{\xi: \xi=\nabla_z\phi^\lambda(z,\eta),\eta\in {\rm supp}\, a^\lambda(z,{}\cdot{})\}.
	\end{equation}

	Similarly, the microlocal support of $F_\theta$ at $z$ is essentially contained in the $\lambda^{-1}$-neighborhood of the set
		\begin{equation}\label{C^theta}
		C^\theta_{z}:=\{\xi: \xi=\nabla_z\phi^\lambda(z,\eta),\eta\in {\rm supp}\, a^\lambda_\theta(z,{}\cdot{})\}.
	\end{equation}
 	For fixed $z$, the $R^{-1}$ neighborhood of $C^\theta_{z}$ is approximately an	$1\times R^{-1}\times R^{-1/2}$ plank tangent to the cone $C_z$. The long direction of this plank is given by $\nabla_z\phi^\lambda$. 
	
	To prove \eqref{eq-sq'}, we shall prove a stronger square function estimate which
requires further decomposing the square function with respect to various intermediate scales. In this section, we define a key change of variable process associated with each $\theta$, and then use it to describe the precise square function estimate we aim to prove. This change of variable process should be viewed as a variable coefficient version of the Lorentz transformation. That is, given $\eta_\theta\in S^1$, after this process, the light cone in frequency space would be tangent to the horizontal coordinate plane along the direction $\eta_\theta$. See Figure 2 in Section 7 for a special case.
	\subsection{Change of Variables}\label{changev}
	Let $\theta\subset S^1$ be a cap with center $\eta_\theta\in S^1$. We would like to  change the variables from $z=(x,t)$ to new variables $(u,t)$, such that
	$$\nabla_\eta\phi(z,\eta_\theta)=u.$$
    {  More precisely, suppose that $\nabla_\eta\phi(x_0, t_0, \eta_0)=u_0$ holds. By the condition \eqref{CC1} $$\det\left(\frac{\partial(\nabla_\eta\phi, t,\eta)}{\partial(x,t,\eta)}\right)(x_0,t_0,\eta_0)\neq 0.$$ Thus, by the implicit function theorem, there exists a local diffeomorphism $(u,t)\mapsto(x_\eta(u, t), t)$ between a neighborhood of $(u_0, t_0)$ and a neighborhood of $(x_0,t_0)$ such that
	\begin{equation}\label{eq:change}\nabla_\eta\phi((x_\eta(u,t),t), \eta)=u,\end{equation}
    for each $\eta$ in a neighborhood of $\eta_0$. Since our result is local, the original domain of $(x,t,\eta)$ can be decomposed into finitely many such pieces, and it suffices to work with one of them. For later use, we can also assume that this neighborhood of $\eta_0$ is small enough so that $|\eta-\eta_0|^2\leq c|\eta-\eta_0|$, where $c$ is a small but fixed constant.

    We claim that one can further assume $(x_0, t_0)=(u_0, t_0)=0$. To see this, write $\mathcal{F}f(x+x_0, t+t_0)$ as
    $$\begin{aligned}& \frac{1}{(2\pi)^d}\int_{\R^d} e^{i\phi(x+x_0, t+t_0,\eta)}\,a(x+x_0, t+t_0, \eta)\, \hat{f}(\eta)\,d\eta\\=&\frac{1}{(2\pi)^d}\int_{\R^d} e^{i(\phi(x+x_0, t+t_0, \eta)-\nabla_\eta\phi(x_0,t_0,\eta_0)\cdot\eta)}\,a(x+x_0, t+t_0, \eta)\, \hat{\tilde{f}}(\eta)\,d\eta,\end{aligned}$$
    where $\tilde{f}$ has Fourier transform $$\hat{\tilde{f}}(\eta)=e^{i\nabla_\eta\phi(x_0,t_0,\eta_0)\cdot\eta}\hat{f}(\eta).$$
    Since translation does not change the $L^p$-norm of $\mathcal{F}f$, and $\|\tilde{f}\|_{L^4}\approx \|f\|_{L^4}$ by the multiplier theorem, we conclude that for our use it is equivalent to replace the given phase function $\phi$ by
    $$\varphi(x,t,\eta):=\phi(x+x_0, t+t_0, \eta)-\nabla_\eta\phi(x_0,t_0,\eta_0)\cdot\eta,$$
    which satisfies all required conditions to be a phase function and $\nabla_\eta\varphi(0,\eta_0)=0$, as desired. 

    As a summary, for every $\eta$ in a neighborhood of $\eta_0$, $x_\eta(u,t)$ is smooth in $\eta, u, t$, defined in the unit ball in $\R^d$, of range contained in the unit ball in $\R^d$, and \eqref{eq:change} holds.

	In our proof below, we only consider $\eta=\eta_\theta$, the center of dyadic caps $\theta$, so for convenience, we denote $x_\theta:=x_{\eta_\theta}$. Then we define $x^\lambda_\theta(u,t):=\lambda\, x_\theta(u/\lambda,t/\lambda)$, which has domain $B_\lambda$ in $\R^d$ and range contained in $B_\lambda$ in $\R^d$. By rescaling \eqref{eq:change} we have
	\begin{equation}
	\label{change-to-u}
	\nabla_\eta\phi^\lambda((x_\theta^\lambda(u,t),t), \eta_\theta)=u.
	\end{equation}
     }

	By our assumption, $\phi$ is homogeneous in $\eta$ of degree 1. It follows that
	$$ \phi^\lambda=\langle \nabla_\eta\phi^\lambda, \eta\rangle,$$
	which implies, together with \eqref{eq:change}, 
	$$\phi^\lambda(z,\eta_\theta)=\langle u, \eta_\theta\rangle.$$
	This means, if we change variables from $z$ to $(u,t)$ and denote \begin{equation}\label{def-tilde-phi}\tilde{\phi}_\theta^\lambda(u,t,\eta):=\phi^\lambda(x_\theta^\lambda(u,t), t,\eta),\end{equation} we have, for every $\eta\parallel\eta_\theta$,
	\begin{equation}\label{nable-tilde-phi-1}\nabla_{u, t} \tilde{\phi}_\theta^\lambda(u,t,\eta)=(\eta, 0).\end{equation}

	On the other hand, direct computation on \eqref{def-tilde-phi} gives
	\begin{equation}\label{nable-tilde-phi-2}\nabla_{u, t}\tilde{\phi}_\theta^\lambda=\nabla_z\phi^\lambda\cdot\frac{\partial(x_\theta^\lambda,t)}{\partial(u,t)}.\end{equation} 
	
	Recall that at each $z\in B_\lambda$, the microlocal support of $F_\theta$ at $z$ is essentially contained in the $\lambda^{-1}$-neighborhood of the plank $C^\theta_z$ as in \eqref{C^theta} with its long direction given by $\nabla_z\phi^\lambda$. 
	By comparing \eqref{nable-tilde-phi-1} and \eqref{nable-tilde-phi-2}, it follows that under this change of variables, the resulting rectangular box is contained in the $R^{-1}$-neighborhood of the horizontal coordinate plane with normal vector $(0,0,1)$. Moreover, the flat direction of the cone is mapped to $(\eta, 0)$. {    To be more precise, a simple integration by parts argument yields the following lemma.
	\begin{lemma}\label{supp-tilde-F}
If $0<\varepsilon\ll1$ and $1\leq R\leq \lambda^{1-\varepsilon},$ let $$\tilde{F}_\theta (u,t):=F_\theta (x_\theta^\lambda(u,t),t).$$
  then its Fourier transform, $\widehat{\tilde{F}_\theta}(\xi)$, is essentially supported in the $R^{-1}$ neighborhood of the set 
  $$\mathcal N_\theta:=\{(\xi',\xi_3): |\xi'|\approx 1,\  |\xi_{3}|\leq CR^{-1},\  dist(\xi'/|\xi'|,\eta_\theta)\leq CR^{-1/2}\}.$$ 
	\end{lemma}
	\begin{proof}
Recall $F_\theta=\mathcal{F}^\lambda f_\theta$, then the Fourier transform of $\tilde{F}_\theta$ reads
\begin{equation*}
\begin{aligned}
\iint  e^{-i(u,t)\cdot \xi+i \phi^\lambda(x_\theta^\lambda(u,t),t,\eta)}a^\lambda_\theta(x_\theta^\lambda(u,t),t,\eta)\,du \,dt\,\hat{f}(\eta)\,d\eta.
\end{aligned}
\end{equation*}
Define $\mathcal{L}(u,t,\xi,\eta):=-\lambda(u,t)\cdot \xi+ \lambda \phi(x_\theta(u,t),t,\eta).$
Define the operator $\mathcal{D}$ as follows:
\beqq
\mathcal{D}:=-i\frac{\nabla_{u,t}\mathcal{L}(u,t,\xi,\eta)}{|\nabla_{u,t} \mathcal{L}(u,t,\xi,\eta)|^2}\cdot \nabla_{u,t}.
\eeqq
Therefore, for each integer $N$, after rescaling in $u,t$, the above integral can be rewritten as 
\begin{equation*}
\begin{aligned}
\lambda^3\iint (\mathcal{D})^N e^{-i\lambda (u,t)\cdot \xi+i \lambda \phi(x_\theta(u,t),t,\eta)}a_\theta(x_\theta(u,t),t,\eta)\,du \,dt\,\hat{f}(\eta)\,d\eta.
\end{aligned}
\end{equation*}
Note that 
$\nabla_{u,t}\mathcal{L}(u,t,\xi,\eta)=-\lambda\xi+\lambda\partial_{u,t}\phi(x_\theta(u,t),t,\eta)$.
Thus  by the above discussion, we know that the set 
$$\big\{\partial_{u,t}\phi(x_\theta(u,t),t,\eta): \eta \in \supp a_\theta(x_\theta(u,t),t,\cdot)\big\}$$
is contained in $\mathcal{N}_\theta $. By choosing $C$ sufficiently large and noting that  $R\leq \lambda^{1-\varepsilon}$, we see that integration by parts will yield a rapidly decaying term ${\rm RapDec(\lambda)}$.

 \end{proof}}
	We remark that this change of variables $(x,t)=(x_\theta(u,t),t)$ is invertible, namely for each cap $\theta$ we have $(u,t)=(u_\theta(x,t),t)$ with
	$$ \frac{\partial (u,t)}{\partial(x, t)}=\left(\frac{\partial(x, t)}{\partial (u,t)}\right)^{-1}.$$
	Then $\frac{\partial (u,t)}{\partial(x, t)}$ sends the rectangle of direction $(\eta,0)$ on the horizontal coordinate plane to a rectangle tangent to the cone $C_z$ whose direction coincides with the flat direction of the cone.
	
	\subsection{Quantitative conditions } In the proof of our main estimate, we run an induction on scales argument with respect to a class of phase functions satisfying the cinematic curvature condition  \eqref{CC1} and \eqref{CC2}. To facilitate the induction process, we need some quantitative upper bounds on the derivatives of the phase and amplitude functions. Let $\varepsilon_0>0$ be a small number. We first assume that the support of the amplitude function $a(x,t,\eta)$ is  contained in $B_{\varepsilon_0}(0)\times       B_{\varepsilon_0}({\bf e_2})$, i.e,
	\beqq
	\begin{aligned}
		{\bf {\bf C_1}:}\ {\rm supp}_{x,t,\eta}\, a(x,t,\eta)\subset B_{\varepsilon_0}(0)\times       B_{\varepsilon_0}({\bf e_2}).
	\end{aligned}
	\eeqq
 Let $L$ be a sufficiently large number depending on the choice of $\varepsilon_0$ and $C_{\rm par}>0$ be a fixed constant.
	\beqq
	\begin{aligned}
		{\bf {\bf C_2}:}\ \|&\partial_\eta^\alpha \partial_{x,t}^\beta \phi(x,t,\eta)\|_{L^\infty(B_{\varepsilon_0}(0)\times B_{\eps_0}({\bf e_2}))}\leq {C}_{\rm par}, (\alpha,\beta)\in \mathbb{N}^2\times \mathbb{N}^3, \\&|\alpha|\leq L,|\beta|\leq L. \;\text{The nonvanishing eigenvalues of }\\
&\partial_{\eta\eta}^2\langle \partial_z\phi(z,\eta), G(z,\eta_0) \rangle|_{\eta=\eta_0}\;\text{fall  into } [1/2,2], \;\text{for}\; (z,\eta_0)\in {\rm supp\,}a(\cdot,\cdot).
 \end{aligned}
  	\eeqq

	For the amplitude function, we assume
	\beqq
	\begin{aligned}
		{\bf {\bf C_3}:}\ \|&\partial_\eta^\alpha \partial_{x,t}^\beta a(x,t,\eta)\|_{L^\infty(B_{\varepsilon_0}(0)\times B_{\eps_0}({\bf e_2}))}\leq C_{\rm par}, (\alpha,\beta)\in \mathbb{N}^2\times \mathbb{N}^3, \\&|\alpha|\leq L,|\beta|\leq L.
	\end{aligned}
	\eeqq

	\subsection{Main Estimates}\label{subsec-def-U-V}

	In the following part, let $s$ denote a dyadic number. Consider a cap $\tau\subset S^1$ with center $\eta_\tau$ and 
	${\rm d}(\tau)=s$, $R^{-1/2}\leq s\leq 1$. Let $V_\tau$ be the $Rs^2\times Rs$-rectangle centered at the origin of the plane, whose short edge is parallel to $\eta_\tau$. Define
	\beq \label{eq:w1}U_\tau:= \{z=(x_\tau^\lambda(u,t),t): u\in V_\tau,|t|\leq R\}.\eeq
 {  We would like to say some words about the notation $x_\tau$. By our change of variables in Section \ref{changev}, one can see that $x_\tau=x_{\theta_\tau}$ where $\theta_\tau$ denotes the central $R^{-1/2}$-cap of $\tau$. This is because they have the same center, namely $\eta_\tau=\eta_{\theta_\tau}$.}
	
	Under this notation, one can tile $(-R, R)^2$ by a collection of translated copies of $V_\tau$ with finite overlaps. Let us call this collection $\mathcal V_\tau$. Therefore, $$\bigcup _{V\in\mathcal V_\tau}V\supset (-R, R)^2.$$ 
  Correspondingly, define 
 $$U:=\{(x_\tau^\lambda(u,t),t):u\in V,|t|\leq R,V\in \mathcal{V}_\tau\}.$$
 An equivalent yet more explicit definition of $U$ can be found in Section \ref{S lorentz}.
 Let $\mathcal{U}_\tau$ be the collection of all such $U$.

 { 
 \begin{lemma}
The collection $\mathcal U_\tau$ is finitely overlapping.
 \end{lemma}

 \begin{proof}Since the rectangular boxes $\{V\times (-R,R)\}_{V\in\mathcal V_\tau}$ are finitely overlapping and each $U$ is obtained from some $V\times (-R,R)$ through a diffeomorphism $(u,t)\mapsto (x_\tau^\lambda(u,t),t)$,  provided that the size of support $a$ is sufficiently small. Thus $\mathcal U_\tau$ is also a covering of $(-R, R)^{2+1}$ with bounded overlaps.\end{proof}}
 
{ For a given $U\in\mathcal U_\tau,$ define
	\begin{equation}\label{def S_U}S_U F(z) :=\begin{cases}
\left(\sum_{\theta\in\mathbf S_{R^{-1/2}}:\,\theta\subset\tau}|F_\theta|^2(z)\right)^{\frac{1}{2}}\Bigg|_U, & \text{if }z\in U,\\
0, &\text{otherwise}.\end{cases}
 \end{equation}} We remark that, as pointed out in \cite{GWZ}, the size of $U$ can be read off of the index $s$ of the sum and the diameter of the physical ball $R$. 	We shall prove the following analog of Theorem 1.5 in Guth--Wang--Zhang at the physical scale $R=\lambda.$
	\begin{theorem}\label{main}
		$$\|F\|_{L^4(B_\lambda)}^4\lessapprox_\eps \sum_{\lambda^{-1/2}\leq s\leq 1}\sum_{\tau\in\mathbf S_s}\sum_{U\in\mathcal U_\tau} |U|^{-1} \|S_U F\|_{L^2(w_{B_\lambda})}^4+\ra(\lambda)\|f\|^4_{L^2(B_\lambda)}.$$
  { Here $s$ denotes a dyadic number} ranging from $\lambda^{-1/2}$ to $1$.
	\end{theorem}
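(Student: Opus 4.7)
The plan is to deduce Theorem \ref{main} from a stronger three-parameter iterative statement, following the induction-on-scales scheme of Guth--Wang--Zhang adapted to the variable coefficient setting where the ambient FIO scale $\lambda$ is tracked as an extra parameter. Specifically, introduce a constant $S(r,R,\lambda)$ defined to be the best constant such that, for every $F=\mathcal F^\lambda f$ whose phase and amplitude satisfy the quantitative conditions $\mathbf{C_1}$--$\mathbf{C_3}$ and every ball $B_R\subset B_\lambda$,
\begin{equation*}
\|F\|_{L^4(B_R)}^4 \le S(r,R,\lambda)^4 \sum_{r^{-1}\le s\le 1}\sum_{{\rm d}(\tau)=s}\sum_{U\parallel U_\tau} |U|^{-1}\|S_U F\|_{L^2(w_{B_R})}^4+\ra(R)\|f\|^4_{L^2}.
\end{equation*}
Theorem \ref{main} is then the assertion $S(\lambda^{1/2},\lambda,\lambda)\lessapprox_\eps 1$, and the induction runs on the ratio $R/r$.

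Three ingredients feed the induction. The first is a \emph{small-scale input}: when $r\le R\le \lambda^{1/2-\eps}$, on balls of radius $R$ the phase $\phi^\lambda$ is a lower-order perturbation of a translation-invariant phase of the form $x\cdot\eta+t\,p(\eta)$, so the Guth--Wang--Zhang square function estimate (Theorem 1.5 of \cite{GWZ}) can be imported via a freezing/stability argument, yielding $S(r,R,\lambda)\lessapprox_\eps 1$ in this regime. The second is a \emph{variable coefficient Kakeya-type estimate} controlling $S(r,R,\lambda)$ favorably when $r\ge R^{1/2+\eps}$; this generalizes the Kakeya input of \cite{GWZ} and is established by exploiting the $\theta$-dependent change of variables $(x,t)=(x_\theta^\lambda(u,t),t)$ of Section \ref{changev}, which flattens the $U$-planks to familiar tube configurations. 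The third, and most delicate, is a \emph{variable coefficient Lorentz rescaling lemma}: for an intermediate scale $r'$ with $r\le r'\le R$, it yields a bound of the schematic form
\begin{equation*}
S(r,R,\lambda)\lesssim S(r,r',\lambda)\cdot\sup_{\tau} S_\tau,
\end{equation*}
where each $S_\tau$ is a square function constant for a rescaled ``child'' problem obtained by zooming into a cap $\tau$ of aperture $1/r'$ via the diffeomorphism of Section \ref{changev} and rescaling to unit aperture; the child problem lives at new ambient scale $\lambda/(r')^2$.

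Given these three lemmas, the induction closes as follows. If $R\le\lambda^{1/2-\eps}$ the small-scale input settles the matter; if $r\ge R^{1/2+\eps}$ the Kakeya-type estimate is the base case. When both $r$ and $R$ lie above $\lambda^{1/2-\eps}$ the ambient scale is essentially inert and one mimics \cite{GWZ} directly, Lorentz-rescaling through $r'=r^2$ to reduce the ratio $R/r$ and iterating. The novel regime is $r\ll\lambda^{1/2-\eps}\ll R$: here one applies the Lorentz rescaling lemma with $r'=\lambda^{1/2-\eps}$, bounding the first factor by the small-scale input and observing that each child problem has new ambient scale $\lambda/(r')^2\approx r'$, so its $r$-parameter is comparable to the square root of the new ambient scale, whereupon the Kakeya-type estimate (together with a straightforward induction on $\lambda$) closes the loop.

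The main obstacle is establishing the variable coefficient Lorentz rescaling lemma in a form clean enough to be iterated. In the translation-invariant case this is a literal affine change of variables preserving the cone; here, zooming into a cap $\tau$ of aperture $s$ must be implemented through the $\theta$-dependent diffeomorphisms of Section \ref{changev}, and one must verify (i) that the rescaled phase still satisfies the cinematic curvature condition together with quantitative bounds of the form $\mathbf{C_1}$--$\mathbf{C_3}$ whose constants do not deteriorate along the iteration, (ii) that the caps $\theta\subset\tau$ of aperture $R^{-1/2}$ correspond to the appropriate caps of the child problem, and (iii) that the $U$-covering on the left is comparable to the natural $U$-covering of the child after pullback. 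The geometric facts collected in Section \ref{S geometry} are presumably designed to make this bookkeeping tractable; pushing them through uniformly enough to survive all three regimes is where the delicate work lies.
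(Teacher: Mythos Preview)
Your overall architecture matches the paper's: three ingredients (small-scale stability, a variable-coefficient Kakeya estimate, a variable-coefficient Lorentz rescaling), induction on the ratio $R/r$, and the case split according to the position of $r,R$ relative to the threshold $\lambda^{1/2-\eps}$. That part is right.

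The genuine gap is your definition of $S(r,R,\lambda)$. You put the bare norm $\|F\|_{L^4(B_R)}^4$ on the left, with $r$ entering only through the range $r^{-1}\le s\le 1$ on the right. The paper (following GWZ exactly here) instead places a square-function quantity on the left,
\[
\sum_{B_r\subset B_R}|B_r|^{-1}\Big\|\Big(\sum_{{\rm d}(\theta)=r^{-1/2}}|F_\theta|^2\Big)^{1/2}\Big\|_{L^2(B_r)}^4,
\]
with the range $R^{-1/2}\le s\le 1$ on the right, and the target is $S(1,\lambda^{1-\eps},\lambda)\lessapprox 1$ rather than $S(\lambda^{1/2},\lambda,\lambda)$. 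This is not cosmetic: it is what makes the Lorentz rescaling iterate. After one application of the outer estimate the right-hand terms are $|U_{\tau,m}|^{-1}\|S_{U_{\tau,m}}F\|^4$ for caps $\tau$ of aperture $s$; rescaling $\tau$ to unit aperture sends $U_{\tau,m}$ to a \emph{ball} of radius $s^2 r'$ and the $\theta\subset\tau$ of aperture $(r')^{-1/2}$ to caps of aperture $(s^2 r')^{-1/2}$, so the output is precisely a summand on the \emph{left} of the child inequality. With your left side a bare $L^4$-norm there is no way to land back on it after one step, and the recursion stalls.

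Two smaller corrections. The caps in the Lorentz step have aperture $s$ ranging over $[(r')^{-1/2},1]$, not the single value $1/r'$, and the child ambient scale is $s^2\lambda$; so there is no uniform ``$\lambda/(r')^2\approx r'$''. In the mixed regime the paper closes the second factor by the induction hypothesis on $R/r$ (the child ratio is $R/r'<R/(2r)$), not by Kakeya plus a separate induction on $\lambda$; and in the ``both large'' regime the paper applies the Kakeya lemma directly (since already $r\ge R^{1/2+}$) rather than Lorentz-rescaling through $r^2$.
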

	Let us first see how Theorem \ref{main} implies Theorem \ref{sq}. The proof is almost identical to that in \cite{GWZ}.  
	
	\begin{proof}[Theorem \ref{main} implies Theorem \ref{sq}]
		Given $\tau$ and $U\in\mathcal U_\tau$, by \eqref{def S_U},  we have
		\[\|S_UF\|^2_{L^2(w_{B_\lambda})}\approx\int_U\sum_{\theta\in\mathbf S_{R^{-1/2}}:\,\theta\subset\tau}|F_\theta|^2.\]
		If we invoke the Cauchy--Schwarz inequality, it follows that
		\[|U|^{-1}\|S_UF\|_{L^2(w_{B_\lambda})}^4\le\int_U\Big(\sum_{\theta\in\mathbf S_{R^{-1/2}}:\,\theta\subset\tau}|F_\theta|^2\Big)^2.\]
		Thus Theorem \ref{main} implies that,  
  modulo a $\text{RapDec}(\lambda)\|f\|^4_{L^2(B_\lambda)}$ error, we have
		\begin{align*}
		\|F\|_{L^4(B_\lambda)}^4&\le C_{\eps} \lambda^\eps  \sum_{\lambda^{-1/2}\leq s\leq 1} \sum_{\tau\in\mathbf S_s}\sum_{U\in\mathcal U_\tau} |U|^{-1} \|S_U F\|^4_{L^2(w_{B_\lambda})}\\
		&\le C_{\eps} \lambda^\eps \sum_{\lambda^{-1/2}\leq s\leq 1}\sum_{\tau\in\mathbf S_s} \Big\|\big(\sum_{\theta\in\mathbf S_{R^{-1/2}}:\,\theta\subset\tau}|F_\theta|^2\big)^{\frac{1}{2}}\Big\|^4_{L^4(B_\lambda)}\\&\le C_{\eps} \lambda^\eps \Big\|\big(\sum_{\theta\in\mathbf S_{R^{-1/2}}:\,\theta\subset\tau}|F_\theta|^2\big)^{\frac{1}{2}}\Big\|^4_{L^4(B_\lambda)}.\end{align*}
		Here we used the fact that the sum over $s$ is dyadic.
	\end{proof}
	
	Theorem \ref{main} is convenient in proving square function estimates. However, as discussed in the introduction, we shall prove a slightly stronger version that works better with multi-scale analysis and induction on scales.

	Note that when $U=B_r$, it is nothing but $s=1$, $R=r$ in the above definition, namely
	\[S_{B_r} F=\begin{cases}
\left(\sum_{_{\theta\in\mathbf S_{r^{-1/2}}}}|F_\theta|^2(z)\right)^{\frac{1}{2}}\Bigg|_{B_r}, & \text{if }z\in B_r,\\
0, &\text{otherwise}.\end{cases}\]
	
	Let $S(r,R,\lambda)$ denote the smallest constant so that for any $N\in\mathbb N$, $B_R\subset B_\lambda$, there exists a constant $C_N>0$ such that
	\begin{multline}\label{def-S}\sum_{B_r\subset B_R}|B_r|^{-1}\|S_{B_r} F\|_{L^2({B_R})}^4 \leq S(r, R, \lambda)\sum_{R^{-1/2}\leq s\leq 1}\sum_{\tau\in\mathbf S_s}\sum_{U\in\mathcal U_\tau} |U|^{-1} \|S_U F\|_{L^2(w_{B_R})}^4\\+C_N\lambda^{-N}\|f\|_{L^2}^4.
	\end{multline}
	It should be understood that $S(r,R,\lambda)$ is uniform across all $\phi$ and $a$ satisfying the cinematic curvature condition and ${\bf {\bf C_1}}$, ${\bf {\bf C_2}}$, ${\bf {\bf C_3}}$. Then to prove Theorem \ref{main} it suffices to prove the following.
	
	\begin{theorem} \label{iteration}For every $\eps>0$, there exists $C_{\eps}>0$ such that 
		$$S(1, \lambda^{1-\eps}, \lambda)\leq C_{\eps} \lambda^{\eps}.$$
	\end{theorem}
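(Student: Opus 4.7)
The plan is to follow the multi-scale induction framework of Guth--Wang--Zhang, adapted to the variable-coefficient setting where the ambient scale $\lambda$ must be tracked as a third parameter alongside $r$ and $R$. I would induct on the ratio $R/r$ (for fixed $\lambda$), aiming to show $S(r, R, \lambda) \lessapprox_\eps (R/r)^\eps$, which immediately yields Theorem \ref{iteration} upon specializing to $r = 1$, $R = \lambda^{1-\eps}$. The key observation is that the threshold $\lambda^{1/2-\eps}$ separates two qualitatively different regimes. Below this scale, the FIO $\mathcal F^\lambda$ looks essentially translation-invariant on a ball $B_R$, because after rescaling $B_R$ to the unit ball the Taylor corrections to the phase are $O(R^2/\lambda) \le O(\lambda^{-2\eps})$; above this scale, the variable-coefficient geometry becomes genuinely significant, but a Kakeya-type input takes over.

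Concretely, I would establish three building-block lemmas. A \emph{small-scale lemma} asserts $S(r, R, \lambda) \lessapprox_\eps R^\eps$ whenever $R \le \lambda^{1/2-\eps}$; this is proved by rescaling a $B_R$ up to unit scale and invoking Theorem 1.5 of \cite{GWZ} against the slightly perturbed phase via a stability argument. A \emph{Kakeya lemma} controls $S(r, R, \lambda)$ for $r \ge R^{1/2+\eps}$, using the geometry of the $1 \times R^{1/2} \times R$ tubes $T$ constructed in Section \ref{subsec-def-U-V} together with a variable-coefficient $L^2$ estimate on sums of tube indicators. A \emph{Lorentz rescaling lemma} provides a multiplicative inequality of the shape
\[
S(r, R, \lambda) \lessapprox_\eps S(r, r', \lambda) \cdot \sup S(\cdot\, r', \cdot\, R, \cdot\, \lambda)
\]
through an intermediate scale $r \le r' \le R$, where the supremum is taken over rescalings that keep the dilated phase within the quantitative class $\mathbf{C_1},\mathbf{C_2},\mathbf{C_3}$. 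The proof of this lemma uses the cap-dependent change of variables $(x,t) \mapsto (u_\tau(x,t), t)$ from Section \ref{changev}, which straightens the flat direction of the cone $C_z$ at a given cap $\tau$, followed by an anisotropic dilation that brings an $Rs^2 \times Rs \times R$ sector $U_\tau$ back to a reference ball.

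With these three lemmas in hand, the induction on $R/r$ closes by a dichotomy. If $R \le \lambda^{1/2-\eps}$, the small-scale lemma finishes the job. If $r \ge R^{1/2+\eps}$, the Kakeya lemma finishes it. Otherwise one applies Lorentz rescaling through a well-chosen intermediate scale $r'$: in the ``straddling'' regime $r \le \lambda^{1/2-\eps} \le R$ one picks $r' = \lambda^{1/2-\eps}$ and applies the small-scale lemma to the inner factor, reducing the outer factor to the case $R \ge \lambda^{1/2-\eps} \ge r'$; in the regime $\lambda^{1/2-\eps} \le r$ one picks $r' = r^{1+\eps}$ and uses the Kakeya lemma on the outer factor. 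In each case the quantity $R/r$ is reduced enough for the induction to gain an $\eps$ power.

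The main obstacle will be establishing the Lorentz rescaling lemma in the variable-coefficient setting. In \cite{GWZ} the Lorentz symmetry of the light cone is exact, whereas here the cone $C_z$ of \eqref{C_z} varies smoothly with the base point $z$, so the rescaling must be performed cap-by-cap through the $\theta$-dependent change of variables of Section \ref{changev}. One must verify that the resulting phase and amplitude still satisfy the cinematic curvature condition together with the quantitative bounds $\mathbf{C_1},\mathbf{C_2},\mathbf{C_3}$, uniformly in $\tau$ and $s$, so that the induction hypothesis is genuinely applicable to the rescaled problem. Moreover one has to check that the geometric objects $U \parallel U_\tau$ transform consistently, so that the $L^2$ averages of $S_U F$ on the right-hand side of \eqref{def-S} match up correctly before and after rescaling; the collected geometric facts promised in Section \ref{S geometry} should be designed exactly to supply these bookkeeping identities.
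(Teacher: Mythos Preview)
Your proposal is correct and follows essentially the same route as the paper: the same three building-block lemmas (small-scale, Kakeya-type, Lorentz rescaling) assembled via the same induction on $R/r$. The only minor difference is bookkeeping: the paper introduces two nearby thresholds $R_\mu=\lambda^{1/2-\mu}$ and $R_{2\mu}=\lambda^{1/2-2\mu}$ (with $\mu=\eps^2/4$), which guarantees that whenever $r\ge R_{2\mu}$ the direct Kakeya lemma already applies (since $R\le\lambda^{1-\eps}$ forces $r\ge R^{(1+\eps)/2}$), so your fourth sub-case never arises and the induction closes without it.
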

	\begin{proof}[Theorem \ref{iteration} implies Theorem \ref{main}] Take $R=\lambda^{1-\eps/100}$, $r=1$, then by Theorem \ref{iteration} we have $S(1, R, \lambda)\leq C_{\eps} \lambda^{\eps/100}.$ Note that
		
		$$\|F\|_{L^4(B_\lambda)}^4\le\lambda^{\eps/30}\sup_{B_R\subset B_\lambda}\|F\|_{L^4(B_R)}^4. $$
		Let $B'_R\subset B_\lambda$ be a ball such that $\|F\|_{L^4(B'_R)}^4\ge\frac12\sup_{B_R\subset B_\lambda}\|F\|_{L^4(B_R)}^4$, then we have
		
		$$\|F\|_{L^4(B_\lambda)}^4\lesssim\lambda^{3\eps/100}\|F\|_{L^4(B'_R)}^4.$$
		Noticing that,
		\begin{multline}
		\|F\|_{L^4(B'_R)}^4\lesssim\sum_{{B_1}\subset B'_R}\|F\|_{L^4(B_r)}^4\lesssim\sum_{B_1\subset B'_R}|B_1|^{-1}\|S_{B_1} F\|_{L^2(B_1)}^4\\ \le  S(1, R, \lambda)\sum_{\lambda^{-1/2}\leq s\leq 1}\sum_{\tau\in\mathbf S_s}\sum_{U\in\mathcal U_\tau} |U|^{-1} \|S_U F\|_{L^2(w_{B'_R})}^4+C_{N+3}R^3\lambda^{-N-3}\|f\|^4_{L^2(B_\lambda)}\\
		\le C_{\eps} \lambda^{\eps} \sum_{\lambda^{-1/2}\leq s\leq 1}\sum_{\tau\in\mathbf S_s}\sum_{U'\parallel U_\tau} |U'|^{-1} \|S_{U'} F\|_{L^2(w_{B_\lambda})}^4+C'_{N}\lambda^{-N}\|f\|^4_{L^2(B_\lambda)}.
		\end{multline}
		Here we used the following facts: firstly, for $U'\supset U,$ where the dimension of $U$ and $U'$ are $R\times Rs\times Rs^2$ and $\lambda\times \lambda s\times \lambda s^2$ respectively, we have
		\[\|S_U F\|_{L^2(w_{B'_R})}^2\le (\lambda/R)^\frac12 \|S_{U'} F\|_{L^2(w_{B_\lambda})}^2=\lambda^{\eps/200} \|S_{U'} F\|_{L^2(w_{B_\lambda})}^2.\]
		Secondly, $|U'|/|U|\approx (\lambda/R)^3=\lambda^{3\eps/100}.$ Finally, the number of $U$ that is contained in $U'$ is bounded by the same number $\lambda^{3\eps/100}$.
	\end{proof}
	
	\section{Outline of the Proof 
	}
	\label{S proof}
	
	The rest of this paper is devoted to the proof of Theorem \ref{iteration}. We need three lemmas, the proofs of which will be given in Section \ref{S small scales}-\ref{S lorentz}.
	
	\begin{lemma}[Small Scale Estimate] \label{small scale}For any $\mu,\,\delta>0$, $1\le r\le R\le \lambda^{\frac1 2-\mu}$, there exists a constant $C_{\mu,\delta}$ such that
		$$S(r, R, \lambda)\leq  C_{\mu,\delta} R^{\delta}(R/r)^\delta.$$
		
	\end{lemma}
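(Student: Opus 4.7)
The strategy is to exploit the fact that on a ball $B_R$ with $R \le \lambda^{1/2-\mu}$, the variable-coefficient phase $\phi^\lambda$ is indistinguishable, up to a $C^M$-perturbation of size $O(\lambda^{-2\mu})$, from a phase that is affine in $z$. This should reduce matters to the square function estimate of Guth--Wang--Zhang \cite{GWZ} for the standard cone, invoked as a black box.

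First, I would freeze the phase at a center $z_0$ of $B_R$ by Taylor expansion, writing $\phi^\lambda(z,\eta) = L(z,\eta) + E(z,\eta)$ with $L(z,\eta) = \phi^\lambda(z_0,\eta) + \nabla_z\phi^\lambda(z_0,\eta)\cdot(z-z_0)$ and $E$ vanishing to second order in $z$ at $z_0$. Since $\phi^\lambda = \lambda \phi(\cdot/\lambda,\eta)$, we have $\partial_z^k \phi^\lambda = O(\lambda^{1-k})$ by $\mathbf{C_1}$, whence on $B_R$
\[
\|\partial_\eta^\alpha E\|_{L^\infty(B_R\times\operatorname{supp} a)} \lesssim_\alpha R^2/\lambda \le \lambda^{-2\mu},
\]
with analogous bounds on higher mixed derivatives. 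Consequently $e^{iE(z,\eta)}$ is a smooth bounded perturbation that can be absorbed into the amplitude, yielding a new symbol $\tilde a$ satisfying the analog of $\mathbf{C_2}$ with constants depending only on $\mu$.

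Second, I would change variables $\xi = \nabla_z\phi^\lambda(z_0,\eta)$. By the non-degeneracy $\mathbf{C_1}$, this is a diffeomorphism onto a conic neighborhood $C_{z_0}$ and carries $R^{-1/2}$-caps $\theta$ in $\eta$ to $R^{-1/2}$-caps on $C_{z_0}$ with bounded distortion. Under this change, $\mathcal F^\lambda f$ restricted to $B_R$ becomes (up to rapidly decaying errors) a constant-coefficient Fourier extension operator for the cone $C_{z_0}$ with a $z$-dependent amplitude (the $z$-dependence poses no difficulty for the GWZ machinery). The variable-coefficient planks $U\parallel U_\tau$ are pushed, with bounded distortion, to standard $R \times Rs \times Rs^2$-planks tangent to $C_{z_0}$. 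Applying the iterated square function estimate of Guth--Wang--Zhang (Theorem 1.3 in \cite{GWZ}) at scale $R$ then yields
\[
\sum_{B_r \subset B_R} |B_r|^{-1}\|S_{B_r}F\|_{L^2(B_r)}^4 \lesssim_\delta (R/r)^\delta \sum_{R^{-1/2}\le s\le 1}\sum_{d(\tau)=s}\sum_{U\parallel U_\tau} |U|^{-1}\|S_U F\|_{L^2(w_{B_R})}^4,
\]
and the extra $R^\delta$ in the claimed bound absorbs the stability loss from the symbol modification and the diffeomorphism.

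The main obstacle will be the careful bookkeeping in the second step: a stability-type lemma, in the spirit of those in \cite{BHS,ILX}, is needed to guarantee that the absorption of $e^{iE}$ and the push-forward by $\eta\mapsto\xi$ do not distort the angular cap decomposition, the physical planks $U$, or the weight $w_{B_R}$ beyond a controlled $R^\delta$-loss. The threshold $R \le \lambda^{1/2-\mu}$ is precisely the boundary at which the quadratic error $R^2/\lambda$ fails to be negligible, so the restriction is natural within this approach.
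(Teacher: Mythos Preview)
Your proposal is correct and takes essentially the same approach as the paper: a stability argument showing that on $B_R$ with $R\le\lambda^{1/2-\mu}$ the variable-coefficient problem reduces to the constant-coefficient Guth--Wang--Zhang estimate for the frozen cone $C_{z_0}$. The only difference is cosmetic---the paper checks the Fourier support of $w_{B_R}F_\theta$ directly (in the style of \cite{ILX}) rather than Taylor-expanding the phase and absorbing $e^{iE}$ into the amplitude (in the style of \cite{BHS}), but these are interchangeable implementations of the same idea, and the paper explicitly acknowledges both routes.
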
 
	Lemma \ref{small scale} should be compared to the stability lemma in \cite{BHS} and the small-scale decoupling inequality in \cite{ILX}. The idea is that when the physical scales $(r,R)$ are small compared to $\lambda^\frac12$, the desired square function estimate follows from its Euclidean counterpart rather directly. Indeed, in \cite{GWZ}, the authors considered each $K^{-1}$-chunk of the cone for $K=K(\eps)$ and the associated square function constant $S_K(r,R)$. They proved that $S_K(r,R)\le C_\delta (R/r)^\delta$. Summing over all these chunks gives the Euclidean version of Lemma \ref{small scale}.
	
	\begin{lemma}[Lorentz rescaling]\label{Lorentz} For any $1\le r_1\le r_2\le r_3 \le \lambda^{1-\eps} $, we have
		
		\[S(r_1,r_3,\lambda)\le \log r_2\cdot S(r_1,r_2,\lambda)\max_{r_2^{-1/2}\le s\le1} S(s^2r_2,s^2r_3,s^2\lambda).\]
		
	\end{lemma}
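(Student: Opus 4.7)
The strategy is a two-scale induction. First I would chop $B_{r_3}$ into a bounded-overlap family of balls $\{B_{r_2}\}$ and, inside each such $B_{r_2}$, invoke the definition of $S(r_1,r_2,\lambda)$ to bound the sum $\sum_{B_{r_1}\subset B_{r_2}} |B_{r_1}|^{-1} \|S_{B_{r_1}} F\|_{L^2(B_{r_1})}^4$ by $S(r_1,r_2,\lambda)$ times a multi-scale square-function sum over caps $\tau$ with $\mathrm{d}(\tau)\in[r_2^{-1/2},1]$ and plates $U\parallel U_\tau$ at scale $r_2$, plus a negligible error. After summing over $B_{r_2}\subset B_{r_3}$ (absorbing the weights $w_{B_{r_2}}$ into $w_{B_{r_3}}$) and pigeonholing the dyadic angular scales, at the cost of a factor $\log r_2$, the sum concentrates on a single scale $s_0\in[r_2^{-1/2},1]$.

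Next, for each fixed cap $\tau$ with $\mathrm{d}(\tau)=s_0$, I would apply the change of variables $(x,t)\mapsto(u_\tau(x,t),t)$ from Section \ref{changev}, followed by an anisotropic Lorentz-type rescaling adapted to the $s_0$-sector. This should send the $r_2 s_0^2\times r_2 s_0\times r_2$ plate $U_\tau$ to a standard ball $B_{s_0^2 r_2}$, the ambient ball $B_{r_3}$ to $B_{s_0^2 r_3}$, the ambient scale $\lambda$ to $s_0^2\lambda$, and each $r_2^{-1/2}$-subcap $\theta\subset\tau$ to a $(s_0^2 r_2)^{-1/2}$-cap of the new cone. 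The piece $F|_\tau := \sum_{\theta\subset\tau} F_\theta$ becomes a new FIO $\tilde F_\tau$ associated to a new phase and amplitude $(\tilde\phi_\tau,\tilde a_\tau)$ which, crucially, still satisfy the cinematic curvature condition and $\mathbf{C_1}$, $\mathbf{C_2}$, $\mathbf{C_3}$ with constants uniform in $\tau$ and $s_0$. Invoking the definition of $S(s_0^2 r_2, s_0^2 r_3, s_0^2\lambda)$ for $\tilde F_\tau$, I would bound the rescaled plate-sum by $S(s_0^2 r_2, s_0^2 r_3, s_0^2\lambda)$ times a multi-scale sum over sub-caps $\tau'\subset\tau$ of rescaled aperture $s'\in[(s_0^2 r_3)^{-1/2},1]$. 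Unwinding the rescaling identifies each such $\tau'$ with a cap of true aperture $s_0 s'\in[r_3^{-1/2},s_0]$ and each rescaled ball with a plate $U\parallel U_{\tau'}$ of the original geometry at scale $r_3$. Summing back over $\tau$ of aperture $s_0$ yields a bound by a subset of the full right-hand side defining $S(r_1,r_3,\lambda)$, and replacing $S(s_0^2 r_2, s_0^2 r_3, s_0^2\lambda)$ by its maximum over $s\in[r_2^{-1/2},1]$ completes the proof.

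The hard part is step two. One must verify that the composite of the Section \ref{changev} change of variables with the anisotropic rescaling converts $F|_\tau$ into a genuine FIO whose phase, amplitude, and supports satisfy the cinematic curvature condition \eqref{CC1}, \eqref{CC2} and the quantitative bounds $\mathbf{C_1}$, $\mathbf{C_2}$, $\mathbf{C_3}$ uniformly in $\tau$ and $s_0$, and that the identification of the rescaled plates $U\parallel U_\tau$ with standard balls $B_{s_0^2 r_2}$ holds down to absolute constants, including the correct accounting of Jacobian factors that arise in transforming $|U|^{-1}\|S_U F\|_{L^2}^4$. The geometric analysis of the map $(x,t)\mapsto(u_\tau(x,t),t)$ promised in Section \ref{S geometry} should deliver the estimates needed to discharge these uniform bounds on $\tilde\phi_\tau$ and $\tilde a_\tau$.
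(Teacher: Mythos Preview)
Your proposal is correct and follows essentially the same route as the paper: apply the defining inequality for $S(r_1,r_2,\lambda)$ (implicitly after covering $B_{r_3}$ by $B_{r_2}$-balls), then for each cap $\tau$ of aperture $s$ perform the Section~\ref{changev} change of variables composed with an anisotropic parabolic rescaling to convert the $U_\tau$-plates into $s^2r_2$-cubes and invoke $S(s^2r_2,s^2r_3,s^2\lambda)$; the $\log r_2$ is exactly the count of dyadic scales $s\in[r_2^{-1/2},1]$. One small correction: the uniform verification that the rescaled phase satisfies \eqref{CC1}, \eqref{CC2} and $\mathbf{C_1}$--$\mathbf{C_3}$ is done directly in Section~\ref{S lorentz} via Taylor expansion about $\eta={\bf e_2}$ together with Euler's homogeneity relation and \eqref{change-to-u}, rather than via the lemmas of Section~\ref{S geometry}, which are tailored to the Kakeya-type argument.
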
	
	Lemma \ref{Lorentz} is the variable coefficient analog of the Lorentz rescaling lemma in \cite{GWZ}. The proof follows the framework developed in \cite{ILX}, which is inspired by ideas from \cite{BHS}. In \cite{BHS}, \cite{ILX}, the authors used decoupling inequality at small scales and a rescaling lemma for decoupling constants to prove variable coefficient analogs. As discussed in the introduction,  estimate (2.12) has two distinct scales $(r,R)$ involved. Thus, unlike previous work, Lemma \ref{small scale} and \ref{Lorentz} are not sufficient to close the induction. We also need the following variable coefficient version of the Kakeya-type estimate in \cite{GWZ}.
	\begin{lemma}[Kakeya-type inequality]\label{Kakeya} For any $\nu,\, \eps>0$, $ \lambda^{\frac1{100}}\le r^2\le \lambda$, there exists a constant $C_{\eps,\nu}$ such that
		
		\[S(r_1,r^2,\lambda)\le C_{\eps,\nu} r^\eps,\ \text{ for all }r_1\in[r^{1+\nu},r^2].\]
		
	\end{lemma}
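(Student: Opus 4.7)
The plan is to reduce Lemma \ref{Kakeya} to the translation-invariant Kakeya-type estimate of Guth--Wang--Zhang \cite{GWZ} via the cone-flattening change of variables introduced in Section \ref{changev}. With $R=r^2$, the hypothesis $r_1\in[r^{1+\nu},r^2]$ puts us in the regime $r_1\ge R^{1/2}$ where the GWZ Kakeya-type inequality operates, with a safety margin $r^\nu$ to absorb the errors coming from the pullback.

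First, I would work inside a single ball $B_{R}\subset B_\lambda$ with $R=r^2$, and fix a dyadic scale $s\in[R^{-1/2},1]$ together with a cap $\tau$ of aperture $s$. For this $\tau$, I would apply the change of variables $(u,t)=(u^\lambda_\tau(x,t),t)$ from Section \ref{changev}. By Lemma \ref{supp-tilde-F}, every transformed wave packet $\tilde F_\theta$ with $\theta\subset\tau$ has Fourier support essentially concentrated in the flat plate $\mathcal N_\theta$, and the set $U\parallel U_\tau$ becomes (up to bounded overlap) a standard rectangular box of dimensions $Rs^2\times Rs\times R$ in the $(u,t)$ coordinates.

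In these flattened coordinates, the geometry of the wave packets mimics that of a constant-coefficient cone, so the translation-invariant Kakeya-type inequality of GWZ applies in the range $r_1\ge R^{1/2}$. This yields
\begin{equation*}
\sum_{B_{r_1}\subset B_R}|B_{r_1}|^{-1}\bigl\|S_{B_{r_1}}\tilde F\bigr\|_{L^2}^4 \lessapprox_\eps \sum_{U\parallel U_\tau}|U|^{-1}\bigl\|S_U\tilde F\bigr\|_{L^2}^4
\end{equation*}
in the flattened picture. Pulling back via the inverse change of variables, whose Jacobian is uniformly bounded above and below on $B_\lambda$ as a consequence of the non-degeneracy condition \eqref{CC1} together with the quantitative bounds ${\bf C_1}$, ${\bf C_2}$, ${\bf C_3}$, and then summing over $s$ and $\tau$, produces the required estimate for $S(r_1,R,\lambda)$.

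The main obstacle I anticipate is the $\tau$-dependence of the change of variables: no single global map flattens the cone in all directions simultaneously, so the argument must be performed one $\tau$ at a time and then reassembled. Careful bookkeeping will be required to ensure that the pullbacks remain consistent across overlapping caps and that the curvature of $\tilde\phi^\lambda_\tau$ for $\eta$ away from the axis parallel to $\tau$ does not corrupt the wave packet structure outside a tiny neighborhood of the seams. The cushion $r^\nu$ coming from the hypothesis $r_1\ge r^{1+\nu}$ (rather than the borderline $r_1\ge r$) is precisely what is available to absorb these geometric errors as well as the approximate-orthogonality losses in the square function decomposition after pullback.
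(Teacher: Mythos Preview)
There is a genuine gap in your plan. The GWZ Kakeya-type estimate is not a per-$\tau$ statement that can be applied after flattening a single cap; it is an inequality for $\int_{B_R}|G|^2$ with $G=\sum_{\theta}|F_\theta|^2$, and its proof hinges on a \emph{global} cone-geometry fact: after decomposing the Fourier support of each $|F_\theta|^2$ into annular pieces $\Omega_{\sigma,\theta}$, for a fixed frequency $\omega$ only $O(1)$ caps $\tau$ (at the corresponding scale) can contribute. This finite-overlap observation is what couples all directions together and produces the right-hand side summed over all $s$ and $\tau$. If you change variables with respect to one fixed $\tau$, you flatten the planks for $\theta\subset\tau$ but destroy the cone geometry for the other directions, so the crucial overlap count is lost. ``Performing the argument one $\tau$ at a time and reassembling'' does not recover it, because the left-hand side $\sum_{B_{r_1}}|B_{r_1}|^{-1}\|S_{B_{r_1}}F\|_2^4$ involves all $\theta$ simultaneously and the cross-terms between distinct $\tau$'s are exactly what the cone geometry controls.

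The paper does not reduce to GWZ as a black box. Instead it re-proves the Kakeya-type inequality from scratch in the variable-coefficient setting (Lemma~\ref{Kakeya-type-estimate}): for each $\theta$ separately it writes $|F_\theta(x,t)|^2$ via Fourier inversion in the $\theta$-dependent coordinates $(u_\theta^\lambda(x,t),t)$, decomposes into the pieces $\Omega_{\sigma,\theta}$, and then works in the \emph{original} $(x,t)$ coordinates where the varying cones $C_{(x,t)}$ live, so that the GWZ finite-overlap observation still applies pointwise in $(x,t)$. The interaction between the different $\theta$-dependent coordinate systems is handled by the geometric Lemmas~\ref{down-up}, \ref{up-down}, \ref{wandering-around}, which show that within a single $\tau$ the coordinate changes for $\theta_1,\theta_2\subset\tau$ agree up to $O(R^{-1})$ on the relevant frequency box and up to a translation inside $V_\tau$ physically. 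The hypothesis $r_1\ge r^{1+\nu}$ is used only in the preliminary reduction, to obtain local $L^2$-orthogonality of the $F_\theta$ on $B_{r_1}$ (the separation $r^{-1}$ between caps versus the ball radius $r_1\ge r^{1+\nu}$ makes the cross-term kernel $\ra(\lambda)$), not to absorb pullback errors.
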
  
	
	As in \cite{GWZ}, we will induct on the ratio $R/r$. However, in our case, an additional parameter $\lambda$ is present. As we shall see in the proof, the relation between $r,R$ and an intermediate scale $\lambda^{1/2-\mu}$ will be the key to our proof. Indeed, we shall use Lemma \ref{Kakeya} to handle the case when both $r$ and $R$ are large compared to $\lambda^{1/2-\mu}$. On the other hand, the case when $r$ and $R$ are both small compared to $\lambda^{1/2-\mu}$ will be covered by Lemma \ref{small scale}. Lastly, when $r$ is small and $R$ is large, we shall invoke Lemma \ref{Lorentz} to go through an intermediate scale to close the induction. 
	\begin{proof}[Proof of Theorem \ref{iteration} via three lemmas]	It suffices to prove that \begin{equation}\label{goal}S(r,R,\lambda)\le  C_\eps R^{\eps/2}(R/r)^{\eps/2},\end{equation}
		for all $1\le r\le R\le\lambda^{1-\eps}.$ Note that $S(r,R,\lambda)\le 1$ trivially holds for all $(r,R,\lambda)$ with $\frac Rr=1$. This will serve as our base case.
		
		\noindent{\bf Induction Hypothesis} For $1\le r\le R\le\lambda^{1-\eps}$, we assume that $$S(r',R',\lambda')\le C_\eps(R')^{\eps/2} (R'/r')^{\eps/2}$$ for all $(r',R',\lambda')$ satisfying  $\lambda'\le\lambda$ and $\frac {R'}{r'}\le\frac{R}{2r}.$

		We choose $\mu=\eps^2/4$, and consider intermediate scales $R_\mu=\lambda^{\frac12-\mu}$ and $R_{2\mu}=\lambda^{\frac12-2\mu}$.  Lemma \ref{small scale} indicates that \eqref{goal} holds if $R\le R_\mu$.  Now for $R_\mu\le R\le R^2_{2\mu}= \lambda^{1-\eps},$ if we also have that $r\ge R_{2\mu}$, then ${\sqrt R}^{1+\eps}\le r\le R.$ Applying Lemma \ref{Kakeya} with $\nu=\eps$ gives \[S(r,R,\lambda)\le C_\eps R^{\eps/2}, \] and thus \eqref{goal} follows from Lemma \ref{Kakeya} directly in this case.

		Finally, we assume that $r<R_{2\mu}$, then $\frac{R}{R_\mu}\le\frac{R}{2r}$ for sufficiently large $\lambda$. If we invoke Lemma \ref{Lorentz} with $r_1=r$, $r_2=R_\mu$, $r_3=R$, we have 
		
		\[S(r,R,\lambda)\le \log \lambda\cdot S(r,R_\mu,\lambda)\max_{R_\mu^{-1/2}\le s\le1} S(s^2R_\mu,s^2R,s^2\lambda).\]
		Using Lemma \ref{small scale} and our induction hypothesis, we have
		
		\[S(r,R,\lambda)\le \log \lambda\cdot C_{\mu,\delta}R_\mu^\delta({R_\mu}/{r})^\delta  C_\eps R^{\eps/2}(R/R_\mu)^{\eps/2}.\]
		
		If we choose $\delta(\eps)$ to be small enough so that $\log\lambda\cdot C_{\mu,\delta}R_\mu^\delta(R_\mu/r)^{\delta-\eps/2}\le 1 $, then \eqref{goal} holds. The induction is closed.
	\end{proof}

	\section{Geometry in Change of Variables}\label{S geometry}
	In Section \ref{changev}, we introduced the change of variables between $(x,t)$ and $(u,t)$. In fact there is a coordinate system $(x_\theta(u,t),t)$ for each $\eta_\theta\in S^1$. These changes of variables are crucial for our analysis, especially in the proof of Lemma \ref{Kakeya}. 
	
	Each coordinate system has its own advantages. Under the original coordinates $(x,t)$, the frequency lies near a cone, so we can play tricks with the cone geometry. This does not work under the coordinates $(u,t)$, since these coordinates depend on the choice of $\eta_\theta$. On the other hand, under coordinates $(u,t)$ associated with $\theta\subset \tau$, the corresponding set 
	\begin{equation}\label{tilde-U} \tilde{U}_\tau:=\{(u,t): (x^\lambda_\tau(u,t), t)\in U_\tau\}=V_\tau\times (-R, R)\end{equation}
	is a $1\times R^{1/2}\times R$-plank, with the property that $\tilde{U}_\tau-\tilde{U}_\tau$ is essentially itself. This, in general, fails on $U_\tau$, which may be curved.
	
	We would like to take advantage of all the different coordinate systems. In this section, we discuss the connection between their geometries carefully. Recall from \eqref{nable-tilde-phi-2} that at each point $(x,t)$, the change from microlocal frequency
	to the Fourier frequency  is given by the matrix
	$$\frac{\partial (u^\lambda_{\theta}(x,t),t)}{\partial(x,t)}.$$
	See Figure \ref{fig1}.
	
	\begin{figure}
		\centering
		\includegraphics[width=.80\textwidth]{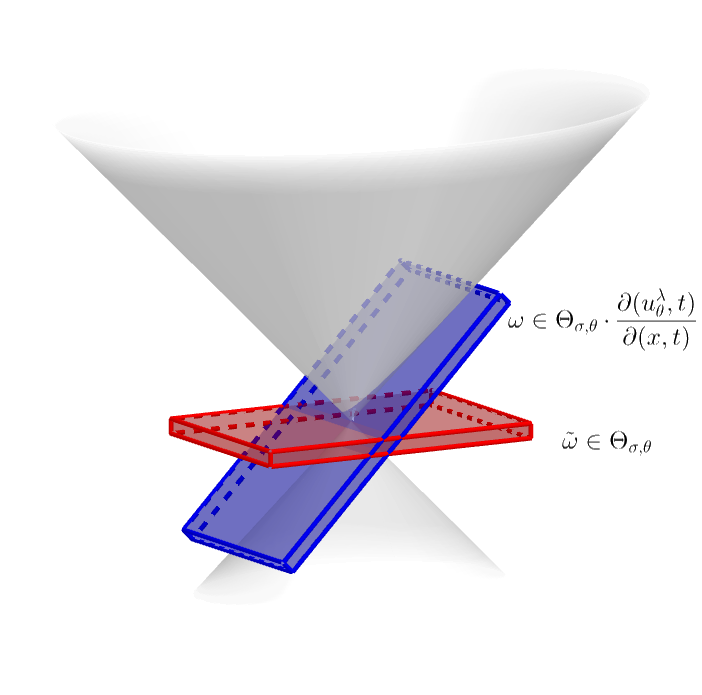}
		\caption{$\frac{\partial (u^\lambda_{\theta}(x,t),t)}{\partial(x,t)}$ acting on $\Theta_{\sigma,\theta}$}
		\label{fig1}
	\end{figure}
	
	For any dyadic number $R^{-1/2}\leq \sigma\leq 1$, denote $\Theta_{\sigma, \theta}$ the rectangular box of dimensions $\sigma^2\times \sigma R^{-1/2}\times R^{-1}$, centered at the original, whose $\sigma^2$-edge has direction $(\theta,0)\in S^2$, $\sigma R^{-1/2}$-edge has direction $(\theta^\perp,0)$, and $R^{-1}$-edge has direction $(0,0,1)$. By Lemma \ref{supp-tilde-F} and basic properties of Fourier transforms of products, these planks ``on the ground" are where the frequency of $|\tilde F_\theta|^2$ lies. Let $\tau\subset S^1$ be a $\sigma^{-1} R^{-1/2}$-cap.  We shall define $\Theta_{\sigma, \tau}$ similarly as a box of the same dimensions with orientation associated with $\tau$. The size of $\tau$ is chosen so that $\Theta_{\sigma, \theta}\subset 4\Theta_{\sigma, \tau}$ for any $\theta\subset\tau$. Therefore it makes no difference to replace $\Theta_{\sigma, \theta}$ by $\Theta_{\sigma, \tau}$. 
 
 Before stating our geometric lemmas on change of variables, we would like to remind the reader of the following notation: $x_\tau=x_{\theta_\tau}$ where $\theta_\tau$ denotes the central $R^{-1/2}$-cap of $\tau$. This is because they have the same center, namely $\eta_\tau=\eta_{\theta_\tau}$. Similarly $u_\tau=u_{\theta_\tau}$.
 
 Our first geometric observation is, when $\sigma$, $\tau$, $(x, t)$, $\tilde{\omega}\in \Theta_{\sigma, \tau}$ are fixed, the corresponding frequency under the coordinates $(x,t)$ is also essentially fixed when $\theta$ runs over $\tau$.
	\begin{lemma}\label{down-up}
		Fix $(x, t)$, $\sigma$, $\tau$ and $\tilde{\omega}\in \Theta_{\sigma, \tau}$. Then for any $\theta_1, \theta_2\subset\tau$,
		$$\left|\tilde{\omega}\cdot\frac{\partial (u^\lambda_{\theta_1}(x,t),t)}{\partial(x,t)}-\tilde{\omega}\cdot\frac{\partial (u^\lambda_{\theta_2}(x,t),t)}{\partial(x,t)}\right|\lesssim R^{-1}.$$
	\end{lemma}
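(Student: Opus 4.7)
The plan is to reduce to a scalar estimate and then exploit Euler's identity in $\eta$. First I would use that $u^\lambda_\theta(x,t)=\nabla_\eta\phi^\lambda(x,t,\theta)$ (from \eqref{change-to-u}) and that the last row of $\partial(u^\lambda_\theta,t)/\partial(x,t)$ is $(0,0,1)$ independent of $\theta$; thus the $\omega_3$-component of $\tilde\omega=(\omega',\omega_3)$ cancels in the difference, and it suffices to prove
\[\Bigl|\nabla_{x,t}\bigl(\omega'\cdot\nabla_\eta\phi^\lambda(x,t,\theta_1)-\omega'\cdot\nabla_\eta\phi^\lambda(x,t,\theta_2)\bigr)\Bigr|\lesssim R^{-1}.\]
I would then represent the $\theta$-difference as a line integral along $\gamma(s):=(1-s)\theta_2+s\theta_1$, $s\in[0,1]$, commute $\nabla_{x,t}$ with the integral, and reduce to the uniform-in-$s$ bound
\[\bigl|\omega'\cdot\nabla_{x,t}\nabla_\eta^2\phi^\lambda(x,t,\gamma(s))(\theta_1-\theta_2)\bigr|\lesssim R^{-1}.\]

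The central tool is Euler's identity. Since $\phi^\lambda$ is homogeneous of degree $1$ in $\eta$, each $\partial_{\eta_j}\phi^\lambda$ is homogeneous of degree $0$, giving the functional identity $\sum_i\eta_i\,\partial^2_{\eta_i\eta_j}\phi^\lambda(x,t,\eta)=0$ for all $\eta\neq 0$; this identity persists after any $\nabla_{x,t}$-differentiation because $\nabla_{x,t}$ commutes with $\nabla_\eta$. Specializing to $\eta=\gamma(s)$, the matrix $\nabla_{x,t}\nabla_\eta^2\phi^\lambda(x,t,\gamma(s))$ annihilates $\gamma(s)$ from both sides. Therefore, in the bilinear pairing only the components of $\omega'$ and $\theta_1-\theta_2$ perpendicular to $\gamma(s)$ contribute.

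To estimate these perpendicular components, I would use the aspect ratio of $\Theta_{\sigma,\tau}$. The hypothesis $\tilde\omega\in\Theta_{\sigma,\tau}$ yields $|\omega'\cdot\theta_\tau^\perp|\lesssim \sigma R^{-1/2}$ and $|\omega'\cdot\theta_\tau|\lesssim\sigma^2$; in the range $\sigma\geq R^{-1/2}$ this gives $|\omega'|\lesssim \sigma^2$. Because $|\gamma(s)-\theta_\tau|\lesssim \sigma^{-1}R^{-1/2}$, the unit vector perpendicular to $\gamma(s)$ differs from $\theta_\tau^\perp$ by the same order, so
\[|\omega'\cdot\gamma(s)^\perp|\leq |\omega'\cdot\theta_\tau^\perp|+|\omega'|\cdot|\gamma(s)^\perp-\theta_\tau^\perp|\lesssim \sigma R^{-1/2}.\]
Combined with the trivial bound $|\theta_1-\theta_2|\lesssim \sigma^{-1}R^{-1/2}$ and the uniform estimate $\|\nabla_{x,t}\nabla_\eta^2\phi^\lambda\|\lesssim 1$ supplied by $\mathbf{C_1}$, the integrand is controlled by $\sigma R^{-1/2}\cdot\sigma^{-1}R^{-1/2}=R^{-1}$. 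The hard part is precisely this Euler-driven cancellation: without eliminating the $\theta_\tau$-direction of $\omega'$ one would only get $|\omega'|\cdot|\theta_1-\theta_2|\lesssim \sigma R^{-1/2}$, which is too weak as soon as $\sigma>R^{-1/2}$. The homogeneity of the phase in $\eta$ is what turns the large bound $\sigma^2$ along $\theta_\tau$ into the smaller effective bound $\sigma R^{-1/2}$ perpendicular to $\gamma(s)$.
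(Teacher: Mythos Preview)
Your proof is correct and follows essentially the paper's approach: both exploit the Euler identity $\eta\cdot\nabla_\eta^2\phi^\lambda\equiv 0$ (written in the paper as $\eta\cdot\partial_\eta u_\theta=0$) to neutralize the large $\tau$-parallel part of $\omega'$, leaving only a $\sigma R^{-1/2}$ perpendicular component to pair against the $\sigma^{-1}R^{-1/2}$ angular spread. The only difference is cosmetic---the paper splits $\tilde\omega=\tilde\omega_l+\tilde\omega_s$ along $\tau,\tau^\perp$ and Taylor-expands at the center $\tau$ (second order for $\tilde\omega_l$, first order for $\tilde\omega_s$), whereas you parametrize by a line integral and invoke Euler pointwise at each $\gamma(s)$, which unifies the two cases into a single estimate.
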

	\begin{proof}
		Recall from Section \ref{changev} that for any $\eta\parallel\eta_\theta$ we have
		$$\nabla_\eta\phi(x,t, \eta)=u_\theta(x,t),\ \phi(x,t,\eta)=\langle u_\theta(x,t), \eta\rangle.$$

		Take $\nabla_\eta$ on both sides of the second equation. Together with the first equation, it follows that
		$$u_\theta=\nabla_\eta\phi=u_\theta+\eta\cdot \partial_\eta u_\theta$$
		which implies for all $(x,t)$, \begin{equation}\label{radial-derivative-u}\eta\cdot \partial_\eta u_\theta=0.\end{equation}
		
		As $\Theta_{\sigma, \tau}$ is $R^{-1}$-thick, without loss of generality we may assume $\tilde{\omega}\in\{\xi: \xi_3=0\}$. Recall that $\eta_\tau\in S^1$ denotes the center of the arc $\tau.$ We write $\tilde{\omega}$ as the sum of two vectors whose directions are $\eta_\tau, \eta_\tau^\perp$, say
		$$\tilde\omega= \tilde\omega_l+\tilde\omega_s,$$
		with $\tilde\omega_l\parallel \eta_\tau$, $|\tilde\omega_l|\leq \sigma^2$, and $\tilde\omega_s\perp \eta_\tau$, $|\tilde\omega_s|\leq \sigma R^{-1/2}$. 
		
		Since $\tilde\omega_l\cdot\partial_\eta u_{\tau}=\eta_\tau\cdot\partial_\eta u_{\tau}=0$ for all $(x,t)$, it follows that 
		$$\tilde\omega_l\cdot\partial_{x_1}\partial_\eta u_{\tau}=\tilde\omega_l\cdot\partial_{x_2}\partial_\eta u_{\tau}=\tilde\omega_l\cdot\partial_t\partial_\eta u_{\tau}=0$$
		and therefore
		$$\left|(\tilde{\omega}_l, 0)\cdot\left(\frac{\partial (u^\lambda_{\theta_1}(x,t),t)}{\partial(x,t)}-\frac{\partial (u^\lambda_{\theta_2}(x,t),t)}{\partial(x,t)}\right)\right|\lesssim [{\rm d}(\tau)]^2\cdot |\tilde{\omega}_l|\lesssim R^{-1}.$$
		
		For $\tilde\omega_s$, one simply has $$\left|(\tilde{\omega}_s, 0)\cdot\left(\frac{\partial (u^\lambda_{\theta_1}(x,t),t)}{\partial(x,t)}-\frac{\partial (u^\lambda_{\theta_2}(x,t),t)}{\partial(x,t)}\right)\right|\lesssim {\rm d}(\tau)\cdot|\tilde{\omega}_s|\lesssim R^{-1}.$$
	\end{proof}
	
	The same holds when the direction of change of variables is reversed.
	\begin{lemma}\label{up-down}
		Fix $(u, t)$, $\sigma$, $\tau$ and $\omega\in \Theta_{\sigma, \tau}\cdot \frac{\partial (u^\lambda_{\theta}(x,t),t)}{\partial(x,t)}$, for some $\theta\subset\tau$. Then for any $\theta_1, \theta_2\subset\tau$,
		$$\left|\omega\cdot\frac{\partial(x^\lambda_{\theta_1}(u,t),t)}{\partial(u,t)}-\omega\cdot\frac{\partial (x^\lambda_{\theta_2}(u,t),t)}{\partial(u,t)}\right|\lesssim R^{-1}.$$
	\end{lemma}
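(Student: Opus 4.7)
The plan is to mirror the proof of Lemma \ref{down-up}, using the matrix-inverse relation $A_\theta(u,t)=B_\theta(x^\lambda_\theta(u,t),t)^{-1}$ to transfer the argument from the $(x,t)$-side to the $(u,t)$-side. By the hypothesis, write $\omega=\tilde\omega\,B_\theta(x^{(*)},t)$ with $x^{(*)}=x^\lambda_\theta(u,t)$ and $\tilde\omega\in\Theta_{\sigma,\tau}$, and set $x^{(i)}=x^\lambda_{\theta_i}(u,t)$. Applying the identity $P^{-1}-Q^{-1}=P^{-1}(Q-P)Q^{-1}$ with $P=B_{\theta_1}(x^{(1)},t)$ and $Q=B_{\theta_2}(x^{(2)},t)$ gives
\[
A_{\theta_1}(u,t)-A_{\theta_2}(u,t)=A_{\theta_1}(u,t)\bigl[B_{\theta_2}(x^{(2)},t)-B_{\theta_1}(x^{(1)},t)\bigr]A_{\theta_2}(u,t).
\]
Setting $\tilde\omega_1:=\omega\,A_{\theta_1}(u,t)$ and using the trivial bound $|A_{\theta_2}|\lesssim 1$, the task reduces to showing $|\tilde\omega_1\cdot[B_{\theta_2}(x^{(2)},t)-B_{\theta_1}(x^{(1)},t)]|\lesssim R^{-1}$.

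Next I would split $B_{\theta_2}(x^{(2)},t)-B_{\theta_1}(x^{(1)},t)=[B_{\theta_2}(x^{(1)},t)-B_{\theta_1}(x^{(1)},t)]+[B_{\theta_2}(x^{(2)},t)-B_{\theta_2}(x^{(1)},t)]$. The first bracket (angle change at a common base point) is controlled by Lemma \ref{down-up}, once one verifies that $\tilde\omega_1$ lies in a fixed dilate of $\Theta_{\sigma,\tau}$; this verification combines Lemma \ref{down-up} (to compare $B_\theta$ and $B_{\theta_1}$ at $x^{(*)}$) with the smoothness bound $|\partial_x B_\theta|\lesssim\lambda^{-1}$ inherited from the rescaling $\phi^\lambda(x,t,\eta)=\lambda\phi(x/\lambda,t/\lambda,\eta)$ (to absorb the discrepancy between $x^{(*)}$ and $x^{(1)}$). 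Decompose $\tilde\omega_1=\tilde\omega_{1,l}+\tilde\omega_{1,s}+\tilde\omega_{1,3}$ into $\tau$-parallel, $\tau$-perpendicular and time components, of respective magnitudes $\lesssim\sigma^2$, $\sigma R^{-1/2}$ and $R^{-1}$. Using $|B_{\theta_2}(x^{(2)},t)-B_{\theta_2}(x^{(1)},t)|\lesssim|x^{(2)}-x^{(1)}|/\lambda\lesssim d(\tau)=\sigma^{-1}R^{-1/2}$, the $\tilde\omega_{1,s}$ and $\tilde\omega_{1,3}$ contributions are immediately $O(R^{-1})$.

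The hard part will be the $\tilde\omega_{1,l}$ contribution to the base-point change: a naive estimate yields only $\sigma^2\cdot d(\tau)=\sigma R^{-1/2}$, which is too weak when $\sigma\gg R^{-1/2}$. The key identity, derived from Euler's formula and the same homogeneity relation $\tau\cdot\partial^2_{\eta\eta}\phi^\lambda(x,t,\tau)\equiv 0$ that powers Lemma \ref{down-up}, is
\[
\tilde\omega_{1,l}\cdot B_\theta(x,t)=\alpha\,\partial_{(x,t)}\phi^\lambda(x,t,\tau)+O(\alpha|\theta-\tau|^2),\qquad\tilde\omega_{1,l}=\alpha\tau,\ |\alpha|\lesssim\sigma^2.
\]
To exploit this, rather than splitting into angle and base-point changes as above, I would perform a \emph{second}-order Taylor expansion along the natural curve $\theta'\mapsto G(\theta'):=B_{\theta'}(x^\lambda_{\theta'}(u,t),t)$ joining $\theta_1$ and $\theta_2$. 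The identity implies that $\tilde\omega_{1,l}\cdot G(\theta')=\alpha\,\partial_{(x,t)}\phi^\lambda(x^\lambda_{\theta'},t,\tau)+O(\alpha d(\tau)^2)=\alpha\,\partial_{(x,t)}\phi^\lambda(x^\lambda_{\theta'},t,\tau)+O(R^{-1})$, so the residual $\theta'$-dependence passes only through the base point $x^\lambda_{\theta'}(u,t)$. A careful second differentiation that pairs the resulting $\partial^2_{xx}\phi^\lambda$ term with the defining equation $\partial^2_{x\eta}\phi^\lambda\cdot\partial_{\theta'}x^\lambda_{\theta'}+\partial^2_{\eta\eta}\phi^\lambda\cdot\theta'^\perp=0$ is needed to squeeze out the expected cancellation and bring this last piece to $O(R^{-1})$; engineering this cancellation is the central technical step.
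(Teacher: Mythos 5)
Your proposal follows the same skeleton as the paper's argument: both invert the Jacobian relation $\frac{\partial(x^\lambda_\tau,t)}{\partial(u,t)}\cdot\frac{\partial(u^\lambda_\tau,t)}{\partial(x,t)}=Id$, reduce matters to the component of the frequency vector parallel to $(\tau,0)$, and try to extract the extra cancellation from the homogeneity identity \eqref{radial-derivative-u}. The trouble is that you stop exactly where the content of the lemma lies. After your ``key identity,'' the long-component contribution reduces to the $\theta'$-variation of $\theta'\mapsto\alpha\,\partial_{(x,t)}\phi^\lambda(x^\lambda_{\theta'}(u,t),t,\tau)$, whose $\theta'$-derivative is $\alpha\,\partial_x\partial_{(x,t)}\phi^\lambda\cdot\partial_{\theta'}x^\lambda_{\theta'}$. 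The crude size of this is $\sigma^2\cdot O(\lambda^{-1})\cdot O(R)$, so integrating over the arc gives $O(\sigma R^{1/2}\lambda^{-1})$, which is much larger than $R^{-1}$ when $R\sim\lambda$ and $\sigma\gg R^{-1/2}$. You acknowledge that ``engineering this cancellation is the central technical step'' and do not carry it out; moreover, the device you propose --- substituting the defining relation $\partial^2_{x\eta}\phi^\lambda\,\partial_{\theta'}x^\lambda_{\theta'}+\partial^2_{\eta\eta}\phi^\lambda\,(\theta')^\perp=0$ --- only rewrites $\partial_{\theta'}x^\lambda_{\theta'}$ as $-(\partial^2_{x\eta}\phi^\lambda)^{-1}\partial^2_{\eta\eta}\phi^\lambda(\theta')^\perp$ and leaves the offending factor $\partial_x\partial_{(x,t)}\phi^\lambda$ untouched, so no cancellation is visible. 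This residual term is exactly the paper's \eqref{second-term}, which the paper disposes of by applying $\partial_{x_i}\partial_{x_j}$ and $\partial_{x_i}\partial_t$ to \eqref{radial-derivative-u}; your write-up contains no substitute for that step, so the heart of the proof is missing.

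A secondary structural remark: your first decomposition into ``angle change at a common base point'' plus ``base-point change,'' with each piece estimated in absolute value, cannot close for the long component. Lemma \ref{down-up} only supplies an upper bound $O(R^{-1})$ for the angle piece, while the base-point piece alone is of size $\sim\alpha\lambda^{-1}|x^{(2)}-x^{(1)}|\sim\sigma R^{1/2}\lambda^{-1}$, so the two must be combined \emph{before} estimating. You implicitly concede this by switching to the composite curve $\theta'\mapsto B_{\theta'}(x^\lambda_{\theta'}(u,t),t)$, which is the correct move and is what the paper does by differentiating the identity \eqref{inversion-matrix-id} in $\eta$ at fixed $(u,t)$: there the explicit $\eta$-derivative of $u^\lambda_\tau$ is killed by \eqref{radial-derivative-u} against $\tilde\omega_l\parallel\tau$, and only the chain-rule term \eqref{second-term} survives. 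Until you supply an actual argument for why that surviving term is admissible, the proof is incomplete at its decisive point.
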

	\begin{proof} 
		The proof still relies on \eqref{radial-derivative-u}, as in the previous lemma. By the previous lemma, we may assume $\eta_\tau$ is  the center of the arc $\tau$. Also, as above, we may decompose a  vector in $\Theta_{\sigma, \tau}$ into three parts, and only the  direction parallel to $(\eta_\tau, 0)$ deserves our attention. Therefore we may assume
		$$\omega=(\tilde{\omega}_l,0)\cdot \frac{\partial (u^\lambda_{\tau}(x,t),t)}{\partial(x,t)},$$
		with $\tilde\omega_l\parallel \eta_\tau$, $|\tilde\omega_l|\leq \sigma^2$.
		
		{ By the definition of our change of variables, 
		\begin{equation}\label{inversion-matrix-id}\frac{\partial(u^\lambda_\tau,t)}{\partial(x,t)}(x^\lambda_\tau(u,t),t)\cdot\frac{\partial(x^\lambda_\tau,t)}{\partial(u,t)}(u,t)=Id.\end{equation}
		Take $\partial_{\eta_k}$, $k=1,2$ on both sides of \eqref{inversion-matrix-id}. We obtain
		\begin{equation}\label{matrix-chain-rule}\frac{\partial(u^\lambda_\tau,t)}{\partial(x,t)}\cdot\left(\partial_{\eta_k}\frac{\partial(x^\lambda_\tau,t)}{\partial(u,t)}\right)+A_k\cdot\frac{\partial(x^\lambda_\tau,t)}{\partial(u,t)}=0,\end{equation}
		where the matrix $A_k$ is given by
		$$A_k= \frac{\partial(\partial_{x_1}u^\lambda_\tau,t)}{\partial(x,t)}\,\partial_{\eta_k}(x^\lambda_\tau)_1 + \frac{\partial(\partial_{x_2}u^\lambda_\tau,t)}{\partial(x,t)}\,\partial_{\eta_k}(x^\lambda_\tau)_2,$$
		with $((x^\lambda_\tau)_1,(x^\lambda_\tau)_2)=x^\lambda_\tau$.
		
		Multiply \eqref{matrix-chain-rule} by $(\tilde{\omega}_l, 0)$ from the left. Then the second term becomes
		\begin{equation}\label{second-term}(\tilde{\omega}_l, 0) \cdot\left(\frac{\partial(\partial_{x_1}u^\lambda_\tau,t)}{\partial(x,t)}\,\partial_{\eta_k}(x^\lambda_\tau)_1 + \frac{\partial(\partial_{x_2}u^\lambda_\tau,t)}{\partial(x,t)}\,\partial_{\eta_k}(x^\lambda_\tau)_2\right)\cdot\frac{\partial(x^\lambda_\tau,t)}{\partial(u,t)}.\end{equation}
		By taking $\partial_{x_i}\partial_{x_j}$,  $\partial_{x_i}\partial_t$ on both sides of  \eqref{radial-derivative-u} and noting the assumption $\tilde\omega_l\parallel \eta_\tau$, one can see that \eqref{second-term} in fact vanishes.
		
		We just proved that, after multiplying \eqref{matrix-chain-rule} by $(\tilde{\omega_l}, 0)$ from the left, the second term vanishes. Then the remaining first term gives
        $$0=(\tilde{\omega}_l,0)\cdot \frac{\partial (u^\lambda_{\tau},t)}{\partial(x,t)}\cdot\left(\partial_{\eta_k}\frac{\partial(x^\lambda_\tau,t)}{\partial(u,t)}\right)=\omega\cdot\left(\partial_{\eta_k}\frac{\partial(x^\lambda_\tau,t)}{\partial(u,t)}\right), \ k=1,2.$$}
		Hence, by the mean value theorem,
		$$\left|\omega\cdot\frac{\partial(x^\lambda_{\theta_1}(u,t),t)}{\partial(u,t)}-\omega\cdot\frac{\partial (x^\lambda_{\theta_2}(u,t),t)}{\partial(u,t)}\right|\lesssim [{\rm d}(\tau)]^2\cdot |\omega| \lesssim R^{-1},$$
		for every $\theta_1, \theta_2\subset\tau$, as desired.
	\end{proof}
	
	We also claim that, when $(x,t)$ is fixed, $(u^\lambda_\theta(x,t), t)$ lies in the same $\tilde{U}\in \tilde{\mathcal{U}}_\tau$ for all $\theta\subset\tau$. As $t$ is fixed, it suffices to consider $u^\lambda_{\theta_1}-u^\lambda_{\theta_2}$. Recall from Section \ref{subsec-def-U-V} that $V_\tau$ in the plane is the $\sigma^{-2}\times \sigma^{-1} R^{1/2}$-rectangle centered at the origin with the short edge parallel to $\eta_\tau$, and  both $U_\tau$ defined in \eqref{eq:w1} and $\tilde{U}_\tau$ defined in \eqref{tilde-U} are associated with the same $V_\tau$. Then our claim is reduced to the following lemma.
	\begin{lemma}\label{wandering-around}
		Fix $(x, t)\in B_R$. Then for any $\theta_1, \theta_2\subset\tau$,
		$$u^\lambda_{\theta_1}(x,t)-u^\lambda_{\theta_2}(x,t)\in 4 V_\tau.$$
	\end{lemma}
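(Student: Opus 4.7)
The plan is to expand $\theta\mapsto u^\lambda_\theta(x,t)$ in the angular variable and to combine the Euler identities forced on $\phi$ by its $\eta$-homogeneity with the size constraint $(x,t)\in B_R$ in order to extract the direction and magnitude of the variation, then compare against the two axes of $V_\tau$.

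First I would parametrize $\theta(s)=(\cos s,\sin s)$ on $S^1$, so that $\theta_i=\theta(s_i)$, $|s_1-s_2|\lesssim d(\tau)=\sigma^{-1}R^{-1/2}$, $\theta'(s)=\theta^\perp(s)$ and $(\theta^\perp)'(s)=-\theta(s)$; and write $u^\lambda_\theta(x,t)=\lambda u_\theta(y)$ with $y=(x/\lambda,t/\lambda)$, $|y|\lesssim R/\lambda$. Because $\phi$ is homogeneous of degree $1$ in $\eta$, the Euler relation $\eta\cdot\partial_\eta u_\theta(y)\equiv 0$ (which is exactly \eqref{radial-derivative-u}) forces the symmetric matrix $\partial^2_{\eta\eta}\phi(y,\theta)$, evaluated at $\eta=\theta\in S^1$, to take the rank-one form $k(y,\theta)\,\theta^\perp\otimes\theta^\perp$ for a smooth scalar $k$. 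Differentiating $u_{\theta(s)}(y)=\nabla_\eta\phi(y,\theta(s))$ in $s$ then gives
\[
\partial_s u_{\theta(s)}(y) = k(y,\theta(s))\,\theta^\perp(s),
\]
so the first-order angular variation is parallel to $\theta^\perp$, which is essentially $\tau^\perp$.

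Next I would integrate from $s_2$ to $s_1$ and use $(\theta^\perp)'=-\theta$ to produce the Taylor expansion
\[
u_{\theta_1}(y)-u_{\theta_2}(y) = (s_1-s_2)\,k(y,\theta_2)\,\tau^\perp + O\!\left(d(\tau)^2\right)\bigl(\tau+\tau^\perp\bigr) + O\!\left(d(\tau)^3\right).
\]
Rescaling by $\lambda$ and decomposing into components, this would yield
\[
\bigl|\langle u^\lambda_{\theta_1}(x,t)-u^\lambda_{\theta_2}(x,t),\tau^\perp\rangle\bigr|\lesssim R\,d(\tau)=\sigma^{-1}R^{1/2},
\]
\[
\bigl|\langle u^\lambda_{\theta_1}(x,t)-u^\lambda_{\theta_2}(x,t),\tau\rangle\bigr|\lesssim R\,d(\tau)^2=\sigma^{-2},
\]
matching the long and short edges of $V_\tau$, hence placing the difference in $4V_\tau$.

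The principal obstacle is to balance the factor $\lambda$ from the rescaling against the factor $|y|\lesssim R/\lambda$ produced by $(x,t)\in B_R$: for this to yield the promised $R$-dependence rather than $\lambda$-dependence, the scalar $k(y,\theta)$ (and its $\theta$-derivatives) must vanish at $y=0$, i.e.\ $u_\theta(0,0)\equiv 0$. This is the standard normalization one arrives at after reducing $\phi$ to the canonical form $\phi(x,t,\eta)=x\cdot\eta+t\,q(x,t,\eta)$, which is harmless for the square function estimate since it amounts to a modulation of $f$ and a change of $(x,t)$-coordinates. Once this normalization is in place, $k(y,\theta)=O(|y|)=O(R/\lambda)$ and similarly for its angular derivatives, so the $\lambda$ is absorbed and the remaining bookkeeping of Taylor error terms is routine.
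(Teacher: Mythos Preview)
Your proposal is correct and follows essentially the same route as the paper's proof: normalize so that $u_\theta(0,0)=0$ (equivalently $\phi(0,0,\eta)\equiv 0$), use this together with $(x,t)\in B_R$ to get $|D_\eta^\alpha u^\lambda_\theta|\lesssim R$, then bound the $\tau^\perp$-component at first order in $d(\tau)$ and the $\tau$-component at second order via the Euler relation \eqref{radial-derivative-u}. The paper does this slightly more economically---it bounds $|\langle u^\lambda_{\theta_1}-u^\lambda_{\theta_2},\tau^\perp\rangle|$ by the full magnitude $|u^\lambda_{\theta_1}-u^\lambda_{\theta_2}|\lesssim R\,d(\tau)$ without ever identifying the rank-one direction of $\partial^2_{\eta\eta}\phi$, and it invokes the normalization $\phi(0,0,\eta)=0$ directly rather than passing through the canonical form $x\cdot\eta+t\,q(x,t,\eta)$---but the substance is the same.
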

	\begin{proof}
		Without loss of generality we  assume that $B_R$ is centered at the origin and  $\phi(0,0,\eta)=0$ for any $\eta$, so $D^\alpha_\eta\phi$ always vanishes when $(x,t)=(0,0)$. Also by \eqref{change-to-u}, with $\theta=\eta/|\eta|$, we have
		\[\nabla_\eta\phi^\lambda(x,t,\eta)=u_\theta^\lambda(x,t).\]
		Therefore all derivatives of $u^\lambda_\theta$ in $\eta$ vanish when $(x,t)=(0,0)$. In particular, this implies
		$$|D_\eta^\alpha u^\lambda_\theta(x,t)|=|D_\eta^\alpha u^\lambda_\theta(x,t)-D_\eta^\alpha u^\lambda_\theta(0,0)|\leq R\, \|D_\eta^\alpha \nabla_{x,t}\,u^\lambda_\theta\|_{L^\infty} \lesssim_\alpha R. $$
		
		Then, along the long edge direction of $V_\tau$,
		$$|\langle u^\lambda_{\theta_1}-u^\lambda_{\theta_2}, \tau^\perp\rangle|\leq|u^\lambda_{\theta_1}-u^\lambda_{\theta_2}|\lesssim \|\nabla_\eta\,u^\lambda_\tau\|_{L^\infty}\cdot {\rm d}(\tau)\lesssim \sigma^{-1} R^{1/2};$$
		for the short edge direction of $V_\tau$, which is $\eta_\tau$, we invoke \eqref{radial-derivative-u} and obtain
		$$|\langle u^\lambda_{\theta_1}-u^\lambda_{\theta_2}, \tau\rangle|\lesssim \|\nabla_\eta\nabla_\eta \, u^{\lambda}_\tau\|_{L^{\infty}}\cdot [{\rm d}(\tau)]^2\lesssim \sigma^{-2},$$
		as desired.
	\end{proof}
	
	\section{Small Scales}
	\label{S small scales}
	In this section, we prove Lemma \ref{small scale} by exploiting a phenomenon known as a ``stability lemma'' in the  literature. The authors of \cite{BHS} used it to deduce small-scale decoupling estimates in the proof of Conjecture \ref{Conj. 3} for odd $d$.  They proved a precise stability lemma using Taylor expansions, which allowed them to approximate $\mathcal F^\lambda$ by an extension operator at tiny scales. Later, inspired by \cite{BHS}, authors in \cite{ILX} developed an independent argument to prove a microlocal  decoupling inequality. The main idea is that at tiny scales $R\ll \lambda^\frac12$, the collection of functions $\{w_{B_R}F_\theta : {\rm d}(\theta)=R^{-1/2}\}$ essentially satisfies the Fourier support assumption in \cite{GWZ}, and thus enjoys the square function estimates proved there. Indeed, it is not hard to see that at such a tiny scale, the varying conic surface in phase space remains within a $R^{-1}$-neighborhood of a fixed cone, and thus the associated tubes $U$ that are curved at a larger scale look straight at scale $R$.   
	In this section, we follow the framework developed in \cite{ILX} to check the Fourier support of restricted $F_\theta$, which seems more illustrative for the readers.

\begin{proof}[Proof of Lemma \ref{small scale}]	Denote the $R^{-1}$-neighborhood of $C_z$ in \eqref{C_z} and $C_z^\theta$ in \eqref{C^theta} by
	\begin{equation}
	C_{z,R  }:=\{\xi: {\rm dist}(\xi,\partial_z\phi^\lambda(z,\eta))\leq R^{-1},\eta\in {\rm supp}\, a^\lambda(z,{}\cdot{})\},
	\end{equation}
	and
	\begin{equation}
	C_{z,R}^\theta:=\{\xi: {\rm dist}(\xi,\partial_z\phi^\lambda(z,\eta))\le R^{-1},\eta\in {\rm supp}\, a_\theta^\lambda(z,{}\cdot{})\}.
	\end{equation}
	
	Fix $\mu>0$. We shall see that when $R$ is smaller than $\lambda^{1/2-\mu}$, Lemma \ref{small scale} can be reduced to the Euclidean case, then the result of Guth--Wang--Zhang \cite{GWZ} applies directly. Fix $\bar{z}\in B_\lambda$ and consider $z\in B_{\lambda^{\mu/2}R}(\bar{z})$, $R<\lambda^{1/2-\mu}$.
	
	{ \begin{lemma}\label{eq:obser}
		There exists an absolute constant $C>0$ such that:
		 for any $z\in B_{\lambda^{\mu/2}R}(\bar{z})$, the set $C_{z,R},C_{z,R}^\theta$ are contained in a $CR^{-1}$-neighborhood of   $C_{\bar{z},R}$ and $C_{\bar{z},R}^\theta$ respectively. 
	\end{lemma}
	
	\begin{proof}
		 Recall $|\det(\partial^2_{zz}\phi^\lambda)|\lesssim 1$, for $C_{z,R}$, it follows from
		$$|\nabla_z\phi^\lambda(z, \eta)-\nabla_z\phi^\lambda(\bar{z}, \eta)|\lesssim|z-\bar{z}|/\lambda\lesssim \lambda^{\mu/2}R/\lambda\le\lambda^{-1/2-\mu+\mu/2}\lesssim R^{-1}.$$  Similarly, we obtain the desired result for $C_{z,R}^\theta$. 
	\end{proof}
	For $1\leq R\leq \lambda^{1/2-\mu}$, let us consider the Fourier support of the restriction of $F,F_\theta$ to the ball $B_R(\bar z)$ using an appropriate weight function. 
 \begin{lemma}\label{eq:le5}
Let $1\leq R\leq \lambda^{1/2-\mu}$. The Fourier support of $w_{B_{\lambda^{\mu/3}R}(\bar z)} F$ and  $w_{B_{\lambda^{\mu/3}R}(\bar z)} F_\theta$ are essentially contained in the $CR^{-1}$-neighborhood of $C_{\bar z,R}$ and $C_{\bar z,R}^\theta$ respectively.
\end{lemma}
 \begin{proof}
Recall $F=\mathcal{F}^\lambda f$. The Fourier transform of $w_{B_{\lambda^{\mu/3}R}(\bar z)} F$ is
\begin{equation*}
\iint e^{-iz\cdot \xi+i\phi^\lambda(z,\eta)}a^\lambda(z,\eta)w_{B_{\lambda^{\mu/3}R}(\bar z)}(z) \hat{f}(\eta)\,dz\,d\eta.
\end{equation*}
If $z\notin B_{\lambda^{\mu/2}R}(\bar z)$, then 
\begin{equation*}
|w_{B_{\lambda^{\mu/3}R}(\bar z)}(z)|={\rm RapDec}(\lambda).
\end{equation*}
Let $\Psi_{B_{\lambda^{\mu/2}R}(\bar z)}$ be a smooth function with compact support in $B_{\lambda^{\mu/2}R}(\bar z)$. We have, modulo a ${\rm RapDec}(\lambda)\|f\|_{L^1}$ error,
\begin{equation}
\begin{aligned}
&\iint e^{-iz\cdot \xi+i\phi^\lambda(z,\eta)}a^\lambda(z,\eta)w_{B_{\lambda^{\mu/3}R}(\bar z)} (z)\,\hat{f}(\eta)\,dz\,d\eta\\&=
\iint e^{-iz\cdot \xi+i\phi^\lambda(z,\eta)}a^\lambda(z,\eta)w_{B_{\lambda^{\mu/3}R}(\bar z)}(z)\Psi_{B_{\lambda^{\mu/2}R}(\bar z)}(z)\, \hat{f}(\eta)\,dz\,d\eta.
\end{aligned}
\end{equation}
By integrating by parts with respect to $z$, one can check that if $\xi\not\in\bigcup_{z\in B_{\lambda^{\mu/2}R}(\bar z)} C_{z,R}$, the integral is rapidly decaying in terms of $\lambda$. We then obtain the desired result by Lemma \ref{eq:obser}.  The corresponding statement regarding $w_{B_{\lambda^{\mu/3}R}(\bar z)}F_\theta$ 
 can be obtained similarly. \end{proof}
 
To sum up, we have reduced estimating $S(r,R,\lambda)$ for $R\le \lambda^{\frac12-\mu}$ to proving corresponding square function estimate associated with the ({fixed}) cone $C_{\bar z}$. To be more precise, if $R\leq \lambda^{1/2-\mu}$,
by Lemma \ref{eq:le5}, it follows that 
\beqq
w_{B_{\lambda^{\mu/3}R}(\bar z)}\mathcal{F}^\lambda f=\chi_{C_{\bar z,R}}(D)(w_{B_{\lambda^{\mu/3}R}(\bar z)}\mathcal{F}^\lambda f)+{\rm RapDec}(\lambda)\|f\|_{L^1},
\eeqq
and 
\beq
\begin{aligned}
w_{B_{\lambda^{\mu/3}R}(\bar z)}\mathcal{F}^\lambda f&=\sum_\theta \chi_{C_{\bar z,R}^\theta}(D)(w_{B_{\lambda^{\mu/3}R}(\bar z)}\mathcal{F}^\lambda f_\theta)+{\rm RapDec}(\lambda)\|f\|_{L^1}\\
&=\sum_\theta \chi_{C_{\bar z,R}^\theta}(D)(w_{B_{\lambda^{\mu/3}R}(\bar z)}\mathcal{F}^\lambda f)+{\rm RapDec}(\lambda)\|f\|_{L^1},
\end{aligned}
\eeq
where $\chi_{C_{\bar z,R}}(D), \chi_{C_{\bar z,R}^\theta}(D)$ are two Fourier multipliers, and $\chi_E$ denotes the characteristic function of $E$. Therefore, modulo a term rapidly decaying in $\lambda$, we have 
\beqq
\chi_{C_{\bar z,R}}(D)(w_{B_{\lambda^{\mu/3}R}(\bar z)}\mathcal{F}^\lambda f)=\sum_\theta \chi_{C_{\bar z,R}^\theta}(D)(w_{B_{\lambda^{\mu/3}R}(\bar z)}\mathcal{F}^\lambda f).
\eeqq
Now, we may apply a version of \cite[Proposition 3.4]{GWZ}\footnote{Indeed, Proposition 3.4 in \cite{GWZ} is an estimate for $S_K(r,R)$, while what we need is the same estimate but for $S(r,R)$. Nonetheless, this estimate is a simple corollary of Proposition 3.4, and its proof is nearly identical to the proof of Theorem 1.3 there.} with respect to the cone $C_{\bar z}$ (see Remark \ref{general cone remark}) to finish the proof of Lemma \ref{small scale}. }\end{proof}

	\section{A Microlocal Kakeya-Type Estimate} \label{S Kakeya}
	In this section, we prove Lemma \ref{Kakeya}. We first reduce it to the following Kakeya-type lemma, which is a microlocal analog of Lemma 1.4 in \cite{GWZ}. The key difference from that in \cite{GWZ} is that we exploit  different coordinate systems and take advantage of all of them. We have discussed the necessity of changing variables in Section \ref{S geometry}.
	
	\begin{lemma}\label{Kakeya-type-estimate}
		Let $G=\sum_{{\rm d}(\theta)=R^{-1/2}}|F_\theta|^2$. Then for any $\eps>0$ there exists $C_\eps>0$ such that
		$$\int_{B_R} |G|^2\leq C_\eps R^\eps \sum_{ R^{-1/2}\leq s\leq 1}\sum_{\tau\in\mathbf S_s}\sum_{U\in\mathcal U_\tau} |U|^{-1}\|S_U F\|_{L^2(w_{B_R)}}^4.$$
	\end{lemma}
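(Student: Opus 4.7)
The plan is to follow the proof of Guth--Wang--Zhang's Lemma~1.4 in \cite{GWZ} (the flat Kakeya-type estimate), carried out inside the variable-coefficient change-of-variables framework prepared in Section~\ref{S geometry}. First, I would expand
\[
\int_{B_R}|G|^2=\sum_{\theta_1,\theta_2}\int_{B_R}|F_{\theta_1}|^2|F_{\theta_2}|^2
\]
and group the pairs $(\theta_1,\theta_2)$ by the dyadic scale $s\in[R^{-1/2},1]$ of their angular separation, writing $(\theta_1,\theta_2)\sim\tau$ when both lie in a cap $\tau$ of aperture $s$ but not in a common sub-cap of aperture $s/2$. Since there are only $O(\log R)$ dyadic values of $s$, it suffices to bound the contribution for each fixed $s$ by the corresponding piece of the right-hand side, absorbing the logarithm into $R^\eps$.

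Next, for a fixed $\tau$ of aperture $s$, pass to the $(u,t)$ coordinates adapted to $\tau$ by setting $\widetilde F_\theta(u,t):=F_\theta(x_\tau^\lambda(u,t),t)$ for every $\theta\subset\tau$; the Jacobian of this map is $\approx 1$ by the non-degeneracy \eqref{CC1}. By Lemma~\ref{supp-tilde-F}, each $\widetilde F_\theta$ has Fourier transform essentially supported in the $R^{-1}$-neighbourhood $\mathcal N_\theta$ of an arc of the horizontal annulus. The key point is Lemma~\ref{down-up}: it says that replacing the individual $\theta$-coordinate systems by the single $\tau$-coordinate system distorts Fourier supports by at most $R^{-1}$, which is harmless because the thickness of the plank $\Theta_{s,\tau}$ is already $R^{-1}$. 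Consequently, for $(\theta_1,\theta_2)\sim\tau$ the products $\widetilde F_{\theta_1}\overline{\widetilde F_{\theta_2}}$ all have microlocal Fourier support in a common translate of $\Theta_{s,\tau}$. In the $(u,t)$ frame, the sets $\widetilde U=V\times(-R,R)$ with $V\parallel V_\tau$ (cf.~\eqref{tilde-U}) are \emph{straight} rectangles of dimensions $Rs^2\times Rs\times R$ dual to $\Theta_{s,\tau}$, and they form a bounded-overlap tiling of the relevant slab; Lemmas~\ref{up-down} and \ref{wandering-around} guarantee that these flat $\widetilde U$'s correspond faithfully to the curved $U$'s on pulling back to $(x,t)$.

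With the geometry flattened, the picture in the $(u,t)$ variables becomes identical to the translation-invariant one in \cite{GWZ}. Applying $L^2$-orthogonality on the planks $\Theta_{s,\tau}$ together with the geometric Kakeya incidence bound on the dual tube family $\{\widetilde U\}$ then yields, for each $\tau$,
\[
\sum_{(\theta_1,\theta_2)\sim\tau}\int|\widetilde F_{\theta_1}|^2|\widetilde F_{\theta_2}|^2\;\lessapprox_\eps\;\sum_{U\parallel U_\tau}|U|^{-1}\|S_UF\|^4_{L^2(B_R)},
\]
and transferring back to $(x,t)$ is lossless up to a $\ra(R)$ tail, since the Jacobian is $\approx 1$. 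Summing over $\tau$ with ${\rm d}(\tau)=s$ and then over the $O(\log R)$ dyadic scales $s$ yields the claim.

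The main obstacle is the second step: the literal Fourier support of $|F_{\theta_1}|^2\overline{|F_{\theta_2}|^2}$ is \emph{not} a plank in the $(x,t)$ variables, because the cone $C_z$ turns from point to point. The geometric lemmas of Section~\ref{S geometry} are precisely what allow one to replace this would-be global Fourier localisation by the microlocal, $\tau$-adapted one used above, and carefully verifying that the $R^{-1}$-scale errors produced in the change of variables are indeed absorbed by the thickness of the planks $\Theta_{s,\tau}$ is the delicate point. By contrast, the handling of the Schwartz-tail weights $w_{B_R}$, rapidly decaying tails, and the translation of $L^2$ norms between $(x,t)$- and $(u,t)$-coordinates should be routine given the framework already in place.
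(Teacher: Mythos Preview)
The proposal has a genuine gap in the decomposition step. You expand $\int_{B_R}|G|^2=\sum_{\theta_1,\theta_2}\int|F_{\theta_1}|^2|F_{\theta_2}|^2$ and sort pairs by their angular separation $s$, but this is not the decomposition that produces the planks $\Theta_{\sigma,\tau}$. In the paper (following \cite{GWZ}) the decomposition is a \emph{high--low} one on the Fourier side of each $|F_\theta|^2$ individually: one writes $|F_\theta|^2$ via Fourier inversion in the $\theta$-adapted $(u,t)$ coordinates and splits the $\tilde\omega$-integral into dyadic pieces $\Omega_{\sigma,\theta}$, $R^{-1/2}\le\sigma\le 1$. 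At a fixed $\sigma$ the caps $\theta$ are regrouped into $\sigma^{-1}R^{-1/2}$-arcs $\tau$, and the decisive input is the cone-geometry fact (Lemma~4.2 of \cite{GWZ}) that for a fixed frequency $\omega$ and fixed $\sigma$ only $O(1)$ of the $\tau$'s contribute. This finite-overlap statement is what makes the Cauchy--Schwarz between \eqref{integrand-1} and \eqref{integrand-2} lossless; it does not appear in your sketch, and without it the argument cannot close.

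Your specific Fourier-support claim is also incorrect. The difference set $\mathcal N_{\theta_1}-\mathcal N_{\theta_2}$ always has extent $\sim\eps_0$ in the radial ($\tau$-)direction, coming from the freedom $|\xi'|\approx 1$ in each factor; hence the Fourier support of $\tilde F_{\theta_1}\overline{\tilde F_{\theta_2}}$ is a $\sim 1\times R^{-1/2}\times R^{-1}$ box regardless of the separation $s$, not a plank of dimensions shrinking with $s$. Consequently no local-constancy or $L^2$-orthogonality argument at the $U_\tau$-scale follows, and the displayed inequality you assert for each fixed $\tau$ is unjustified. A further structural difference: the paper does \emph{not} pass to a single $\tau$-adapted coordinate system for all $\theta\subset\tau$; it keeps each $|F_\theta|^2$ in its own $\theta$-coordinates and uses Lemmas~\ref{down-up}, \ref{up-down} and \ref{wandering-around} exactly to reconcile those different systems inside the Cauchy--Schwarz step and in the kernel analysis \eqref{kernel-K-theta}--\eqref{essentially-in-U-tau}.
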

	
	\begin{proof}
		[Lemma \ref{Kakeya-type-estimate} implies Lemma \ref{Kakeya}.] {  Let $\psi_{B_R}\in C_0^\infty$ be a non-negative function that equals $1$ on $B_R$ and $0$ outside $2 B_R$. By the definition of $S_{B_{r_1}} F$,
  $$\|S_{B_{r_1}} F\|_{L^2(B_{r_1})}^2\leq \int \psi_{B_{r_1}}\cdot\sum_{{\tau\in\mathbf S_{r_1^{-1/2}}}}|F_\tau|^2.$$
  Since $r_1\leq r^2$, every $r_1^{-1/2}$-cap $\tau$ can be decomposed into a union of $r^{-1}$-caps. This means one can write  
  $$F_\tau=\sum_{\substack{\theta\subset\tau,\\ {\theta\in\mathbf S_{r^{-1}}}}}F_\theta$$
  and
	$$\begin{aligned} \int \psi_{B_{r_1}}\cdot\sum_{{\tau\in\mathbf S_{r_1^{-1/2}}}}|F_\tau|^2=&\int \psi_{B_{r_1}}\cdot\sum_{{\tau\in\mathbf S_{r_1^{-1/2}}}}\left|\sum_{\substack{\theta\subset\tau,\\ \theta\in\mathbf S_{r^{-1}}}}F_\theta\right|^2\\=&\sum_{{\tau\in\mathbf S_{r_1^{-1/2}}}}\sum_{\substack{\theta_1\subset\tau,\\ \theta_1\in\mathbf S_{r^{-1}}}}\sum_{\substack{\theta_2\subset\tau,\\ \theta_2\in\mathbf S_{r^{-1}}}}\int \psi_{B_{r_1}}\cdot F_{\theta_1} \bar{F}_{\theta_2}.\end{aligned}$$
    We shall show that we only need to consider adjacent caps $\theta_1, \theta_2$.
		
		Recall from \eqref{eq:200a} that
		$$F_\theta(z) = \F^\lambda_\theta f(z)=\int_{\R^2}
		e^{i\phi^\lambda(z,\eta)}
		a^{\lambda}_\theta(z,\eta)\hat{f}(\eta)\,d\eta,$$
        so the integral
        $$\begin{aligned}&\int \psi_{B_{r_1}}\cdot F_{\theta_1} \bar{F}_{\theta_2}\\=& \iint\left(\int e^{i( \phi^\lambda(z,\eta_1)-\phi^\lambda(z,\eta_2))}\,a^{\lambda}_{\theta_1}(z,\eta_1)\,\overline{a^{\lambda}_{\theta_2}(z,\eta_2)}\,\psi_{B_{r_1}}(z)\,dz\right)\hat{f}(\eta_1)\overline{\hat{f}(\eta_2)}\,d\eta_1\,d\eta_2\end{aligned}.$$
		To show it has rapid decay when $\theta_1, \theta_2$ are $r^{-1}$-separated, it suffices to show the kernel
  $$\int e^{i (\phi^\lambda(z,\eta_1)-\phi^\lambda(z,\eta_2))}\,a^{\lambda}_{\theta_1}(z,\eta_1)\,\overline{a^{\lambda}_{\theta_2}(z,\eta_2)}\,\psi_{B_{r_1}}(z)\,dz$$
  has rapid decay. This is H\"ormander’s argument. More precisely, the phase function in this kernel satisfies
  $$|\nabla_x(\phi^\lambda(x,t,\eta_1)- \phi^\lambda(x,t,\eta_2))|=|\partial_{x\eta}^2\phi^\lambda\cdot(\eta_1-\eta_2)|+O(|\eta_1-\eta_2|^2).$$
  By condition \eqref{CC1}, the first term is $\approx |\eta_1-\eta_2|$, where the implicit constant only depends on $\phi$, and thus is independent of $\lambda$. We have discussed in Section \ref{changev} that one can assume the $\eta$-support of the amplitude is small enough so that $|\eta_1-\eta_2|^2\ll |\eta_1-\eta_2|$. Therefore the $O(|\eta_1-\eta_2|^2)$ term can be ignored and
  $$|\nabla_x(\phi^\lambda(x,t,\eta_1)- \phi^\lambda(x,t,\eta_2))|\gtrsim |\eta_1-\eta_2|,$$
  which is $\gtrsim r^{-1}$ if $\theta_1, \theta_2$ are $r^{-1}$-separated. Then by integration by parts
  $$\left|\int \psi_{B_{r_1}}\cdot F_{\theta_1} \bar{F}_{\theta_2}\right|= \ra(r_1/r)\|f\|_{L^2}^2=\ra(\lambda)\|f\|_{L^2}^2,$$ 
  where the last equality follows from the assumption $r_1\geq r^{1+\nu}\geq \lambda^{\frac1{200}}$ in Lemma \ref{Kakeya}. 
  
  With this local $L^2$-orthogonality, it follows that
		$$\int\psi_{B_{r_1}}\cdot \left|\sum_{\substack{\theta\subset\tau,\\ \theta\in\mathbf S_{r^{-1}}}}F_\theta\right|^2\lesssim\int \psi_{B_{r_1}}\cdot \sum_{\substack{\theta\subset\tau,\\ \theta\in\mathbf S_{r^{-1}}}}|F_\theta|^2+ \ra(\lambda)\|f\|_{L^2}^2. $$
  
		Then we take sum in $\tau\in\mathbf S_{r_1^{-1/2}}$ to obtain that, up to a  $\ra(\lambda)$ error,
		$$\|S_{B_{r_1}} F\|_{L^2(B_{r_1})}^2\lesssim \sum_{{\tau\in\mathbf S_{r_1^{-1/2}}}}\int \psi_{B_{r_1}}\cdot \sum_{\substack{\theta\subset\tau,\\ \theta\in\mathbf S_{r^{-1}}}}|F_\theta|^2= \int \psi_{B_{r_1}}\cdot \sum_{\theta\in\mathbf S_{r^{-1}}}|F_\theta|^2=\int\psi_{B_{r_1}}\cdot G.$$
		Finally, by Cauchy--Schwarz, modulo a $\ra(\lambda)$ term,
		$$\sum_{B_{r_1}\subset B_r}|B_{r_1}|^{-1}\|S_{B_{r_1}} F\|_{L^2(B_{r_1})}^4\lesssim\sum_{B_{r_1}\subset B_r}\int_{B_{r_1}} |G|^2 =\int_{B_r} |G|^2.$$
		
		Hence Lemma \ref{Kakeya} follows from Lemma \ref{Kakeya-type-estimate}. }
	\end{proof}

	Now it remains to prove Lemma \ref{Kakeya-type-estimate}.

	Recall that in Section 2.3, for each $\theta$ there is an associated change of variables $(x,t)=(x_\theta(u,t),t)$, and we have defined $\tilde{F}_\theta(u,t)=F_\theta(x^\lambda_\theta(u,t), t)$. Denote its inverse as $(u,t):=(u_\theta(x,t),t)$ and $u_\theta^\lambda({}\cdot{}):= \lambda u_\theta({}\cdot/{}\lambda)$.  By Fourier inversion, 
	$$|F_\theta(x,t)|^2 = |\tilde{F}_\theta(u^\lambda_\theta(x,t),t)|^2=\int e^{2\pi i(u^\lambda_\theta(x,t),t)\cdot\tilde{\omega}}\, \left(|\tilde{F}_\theta|^2\right)^\wedge(\tilde{\omega})\,d\tilde{\omega}.$$
	Then
	\begin{equation}\label{Fourier-inversion-1}\int_{B_R}|G|^2=\int_{B_R}\left(\sum_\theta\int e^{2\pi i(u^\lambda_\theta(x,t),t)\cdot\tilde{\omega}}\,  \left(|\tilde{F}_\theta|^2\right)^\wedge(\tilde{\omega})\,d\tilde{\omega}\right)^2dx\,dt.\end{equation}
	
	By Lemma \ref{supp-tilde-F}, the Fourier support of $\tilde{F}_\theta$ lies in $\mathcal{N}_\theta$. By basic properties of Fourier transforms of products, one can see that $\tilde{\omega}$ in \eqref{Fourier-inversion-1} lies in $$\mathcal{N}_\theta+(-\mathcal{N}_\theta),$$ which is contained in a $1\times R^{-1/2}\times R^{-1}$-plank $\Theta_\theta$ centered at the origin, with its $1$-edge parallel to $(\eta_\theta,0)$ and $R^{-1/2}$-edge parallel to $(\eta_\theta^\perp,0)$.
	
	Then we decompose $\Theta_\theta$ as in Section \ref{S geometry}. For any dyadic number $R^{-1/2}\leq \sigma\leq 1$, denote $\Theta_{\sigma, \theta}\subset\Theta_\theta$ as the plank of dimensions $\sigma^2\times \sigma R^{-1/2}\times R^{-1}$, centered at the original of direction, with its $\sigma^2$-edge parallel to $(\eta_\theta, 0)$ and $\sigma R^{-1/2}$-edge parallel to $(\eta_\theta^\perp,0)$. Let $\psi_{\Theta_{\sigma, \theta}}\in C_0^\infty$ which equals $1$ on $\Theta_{\sigma, \theta}$ and $0$ outside $\Theta_{2\sigma, \theta}$. Then define
	$$
	\Omega_{\sigma, \theta}:=\Theta_{\sigma, \theta}\backslash \Theta_{ \frac{\sigma}{2}, \theta},\  \psi_{\Omega_{\sigma, \theta}}:=\psi_{\Theta_{\sigma, \theta}}-\psi_{\Theta_{\sigma/2, \theta}}, \ \text{if } 2R^{-1/2}\leq \sigma<1,
	$$
	and
	$$\Omega_{R^{-1/2}, \theta}:=\Theta_{R^{-1/2}, \theta}\sim B_{R^{-1}},\ \psi_{\Omega_{\sigma, \theta}}:=\psi_{\Theta_{R^{-1/2}, \theta}}.$$

	{ Now we can decompose the domain of $\tilde\omega$ in \eqref{Fourier-inversion-1} to obtain
	\begin{equation}\label{decompose-planks-epsilon-room}\begin{aligned}
	&\int_{B_R}\left(\sum_{\sigma}\sum_{\theta}\int e^{2\pi i(u^\lambda_\theta(x,t),t)\cdot\tilde{\omega}}\, \left(|\tilde{F}_\theta|^2\right)^\wedge(\tilde{\omega})\,\psi_{\Omega_{\sigma, \theta}}(\tilde{\omega})\,d\tilde{\omega}\right)^2dx\,dt\\=&\int\left(\chi_{B_R}\cdot\sum_{\sigma}\sum_{\theta}\int e^{2\pi i(u^\lambda_\theta(x,t),t)\cdot\tilde{\omega}}\, \left(|\tilde{F}_\theta|^2\right)^\wedge(\tilde{\omega})\,\psi_{\Omega_{\sigma, \theta}}(\tilde{\omega})\,d\tilde{\omega}\right)^2dx\,dt.\end{aligned}
	\end{equation}
	
	By Plancherel  in $(x,t)$ it equals
	\begin{equation}\label{ready-for-CS}\begin{aligned}
	    &\int\left(\int_{B_R}e^{-2\pi i(x,t)\cdot\omega}\left(\sum_{\sigma}\sum_{\theta}\int e^{2\pi i(u^\lambda_\theta(x,t),t)\cdot\tilde{\omega}}\, \left(|\tilde{F}_\theta|^2\right)^\wedge(\tilde{\omega})\,\psi_{\Omega_{\sigma,\theta}}(\tilde{\omega})\,d\tilde{\omega}\right)dx\,dt\right)^2d\omega\\=&\int\left(\int_{B_R}\sum_{\sigma}\sum_{\theta}\int e^{2\pi i((u^\lambda_\theta(x,t),t)\cdot\tilde{\omega}-(x,t)\cdot\omega)}\, \left(|\tilde{F}_\theta|^2\right)^\wedge(\tilde{\omega})\,\psi_{\Omega_{\sigma,\theta}}(\tilde{\omega})\,d\tilde{\omega}\,dx\,dt\right)^2d\omega.\end{aligned}\end{equation}}
	
	Notice that, by integration by parts in $(x,t)$, it suffices to consider $(\theta, (x,t), \omega, \tilde{\omega})$ satisfying
	\begin{equation}\label{cut-off-on-4}\left|\tilde{\omega}\cdot\frac{\partial (u^\lambda_\theta(x,t),t)}{\partial(x,t)}-\omega\right|\lessapprox_\eps R^{-1}.\end{equation}
	We choose a smooth cutoff function $\chi_\theta(x,t,\omega,\tilde{\omega})$ which equals $1$ for all $(x,t,\omega,\tilde\omega)$ satisfying \eqref{cut-off-on-4}.
	
	For each fixed $\sigma$, divide the circle into $\sigma^{-1}R^{-1/2}$-arcs $\tau$. When $(x,t), \sigma, \tau, \omega$ are fixed and $\theta\subset\tau$, \eqref{cut-off-on-4} and Lemma \ref{up-down}  imply that $\tilde{\omega}$ essentially lies in a ball of radius $R^{-1}$. { Therefore there is no extra cost to replace $\chi_\theta$ by $\chi_{\theta_\tau}$, where $\theta_\tau$ again denotes the central $R^{-1/2}$-cap of $\tau$. For convenience we denote $\chi_\tau:=\chi_{\theta_\tau}$.} Then the integral \eqref{ready-for-CS} becomes
	\begin{equation}\label{right-before-CS}\int\left(\int_{B_R}\sum_{\sigma}\sum_{\tau\in \mathbf{S}_{\sigma^{-1}R^{-1/2}}}\int_{\Omega_{\sigma,\tau}}\chi_\tau\sum_{\theta\in\mathbf S_{R^{-1/2}}:\,\theta\subset\tau} \cdots d\tilde{\omega}\,dx\,dt\right)^2d\omega.\end{equation}
	
	Now we apply Cauchy--Schwarz to \eqref{right-before-CS}. More precisely, it is bounded above by an integral on $\omega$, with integrand the product of
	\begin{equation}
	\label{integrand-1}
	\int_{B_R}\sum_\sigma\sum_{\tau \in \mathbf{S}_{\sigma^{-1}R^{-1/2}}}\int_{\Omega_{\sigma,\tau}}\chi_{\tau}\,d\tilde{\omega}\,dx\,dt
	\end{equation}
	and
	\begin{equation}
	\label{integrand-2}
	\int_{B_R}\sum_\sigma\sum_{\tau}\int_{\Omega_{\sigma,\tau}}\chi_{\tau}\left|\sum_{\theta\in\mathbf S_{R^{-1/2}}:\,\theta\subset\tau} e^{2\pi i((u^\lambda_\theta(x,t),t)\cdot\tilde{\omega}-(x,t)\cdot\omega)}\,\left(|\tilde{F}_\theta|^2\right)^\wedge(\tilde{\omega})\right|^2d\tilde{\omega}\,dx\,dt.
	\end{equation}
	
	For the first factor \eqref{integrand-1}, we have seen that when $\omega, (x,t), \sigma, \tau$ are fixed, \eqref{cut-off-on-4} and Lemma \ref{up-down}  imply that $\tilde{\omega}$ essentially lies in a ball of radius $R^{-1}$. Therefore \eqref{integrand-1} is
	\begin{equation}\label{int-tilde-omega}
	\lessapprox_\eps R^{-3}\sum_\sigma\int_{B_R}\sum_\tau\chi^*_{\tau} (x,t,\omega)\,dx\,dt,
	\end{equation}
	where $$\chi^*_{\tau} (x,t,\omega)=\sup_{\tilde\omega\in \Omega_{\sigma,\tau}}\chi_{\tau} (x,t, \omega,\tilde\omega).$$
	By \eqref{cut-off-on-4} one can see that $\chi^*_\tau$ vanishes unless $\omega$ essentially lies in the $R^{-1}$-neighborhood of 
	$$\Omega_{\sigma, x,t}:=\bigcup_\theta\,\Omega_{\sigma, \theta}\cdot\frac{\partial (u^\lambda_\theta(x,t),t)}{\partial(x,t)}.$$

	Here comes the key observation from Guth--Wang--Zhang (see Lemma 4.2 in \cite{GWZ}). Let $\mathcal{N}_{R^{-1}}(\Omega_{\sigma, x,t})$ denote the $R^{-1}$ neighborhood of $\Omega_{\sigma, x,t}$. When $\sigma$, $(x,t)$, $\omega\in \mathcal{N}_{R^{-1}}(\Omega_{\sigma, x,t})$ are fixed, there are only finitely many $\tau$ with $\chi^*_\tau$ non-vanishing.   This is proved in \cite{GWZ} for the standard cone $(\xi, |\xi|)\in\R^3$, while as we explained right after Theorem \ref{sq}, it remains valid on a general cone with its curvature bounded from above and below. In particular, it applies to our cones for each fixed $(x,t)$, thanks to our cinematic curvature assumption. For completeness, we provide proof of a generalization of \cite[Lemma 4.2]{GWZ} to a general cone in the Appendix (Lemma \ref{general cone}).
	
	The discussion above implies that \eqref{int-tilde-omega} is bounded above by
	$$R^{-3}\sum_\sigma \int_{B_R}\,dx\,dt\lesssim \log R.$$

	We just showed that the factor \eqref{integrand-1} is $\lessapprox_\eps 1$. It remains to consider the integral on $\omega$ with integrand \eqref{integrand-2}, which equals
	$$\int\int_{B_R}\sum_\sigma\sum_{\tau\in \mathbf{S}_{\sigma^{-1}R^{-1/2}}}\int_{\Omega_{\sigma,\tau}}\chi_{\tau}\left|\sum_{\theta\in\mathbf S_{R^{-1/2}}:\,\theta\subset\tau} e^{2\pi i(u^\lambda_\theta(x,t),t)\cdot\tilde{\omega}}\,\left(|\tilde{F}_\theta|^2\right)^\wedge(\tilde{\omega})\right|^2d\tilde{\omega}\,dx\,dt\,d\omega$$
	
	When $(x,t),\ \sigma,\ \tau,\ \tilde{\omega}$ are fixed, \eqref{cut-off-on-4} and Lemma \ref{down-up} imply that $\chi_\tau$ vanishes unless $\omega$ essentially lies in a ball of radius $R^{-1}$. Therefore by integrating $d\omega$ it is bounded above by
	$$R^{-3}\int_{B_R}\sum_\sigma\sum_{\tau\in \mathbf{S}_{\sigma^{-1}R^{-1/2}}}\int_{\Theta_{\sigma, \tau}}\left|\sum_{\theta\in\mathbf S_{R^{-1/2}}:\,\theta\subset\tau} e^{2\pi i(u^\lambda_\theta(x,t),t)\cdot\tilde{\omega}}\,\left(|\tilde{F}_\theta|^2\right)^\wedge(\tilde{\omega})\right|^2d\tilde{\omega}\,dx\,dt.$$
	
	Fix $(x,t)$ and apply Plancherel. It becomes
	$$ R^{-3}\int_{B_R}\sum_\sigma\sum_{\tau}\int\left|\sum_{\theta\in\mathbf S_{R^{-1/2}}:\,\theta\subset\tau}\int e^{2\pi i((u^\lambda_\theta(x,t),t)+(u_1, t_1))\cdot\tilde{\omega}}\left(|\tilde{F}_\theta|^2\right)^\wedge(\tilde{\omega})\,\psi_{\Theta_{\sigma, \tau}}(\tilde{\omega})\,d\tilde{\omega}\right|^2\,du_1dt_1dxdt$$
	\begin{equation}\label{with-kernel}= R^{-3}\int_{B_R}\sum_\sigma\sum_{\tau}\int\left|\sum_{\theta\in\mathbf S_{R^{-1/2}}:\,\theta\subset\tau}\int K_\theta(x,t,u_1,t_1,u_2,t_2)\,\left|\tilde{F}_\theta(u_2,t_2)\right|^2du_2dt_2\right|^2du_1dt_1dxdt,\end{equation}
	where the kernel $K_\theta$ is given by
	\begin{equation}\label{kernel-K-theta}K_\theta(x,t,u_1,t_1,u_2,t_2)=\int e^{2\pi i((u^\lambda_\theta(x,t),t)+(u_1,t_1)-(u_2, t_2))\cdot\tilde{\omega}}\,\psi_{\Theta_{\sigma,\tau}}(\tilde{\omega})\,d\tilde{\omega}.\end{equation}
	Notice that $\Theta_{\sigma,\tau}$ and $\tilde{U}_\tau$ defined in \eqref{tilde-U} are dual rectangular boxes to each other. Therefore
	\begin{equation}\label{upper-bound-K-theta}
	\|K_\theta\|_{L^\infty}\lesssim |\Theta_{\sigma,\tau}|=|\tilde{U}_\tau|^{-1}\approx |U_\tau|^{-1},
	\end{equation}
	and by integration by parts $K_\theta$ has rapid decay unless essentially
	\begin{equation}\label{essentially-in-U-tau}(u^\lambda_\theta(x,t),t)+(u_1, t_1)-(u_2, t_2)\in\Theta_{\sigma,\tau}^*= \tilde{U}_\tau.\end{equation}
	
	{ We cover the domain of $(u_1,t_1)$ in \eqref{with-kernel} by a collection of finitely overlapping rectangular boxes $\tilde{\mathcal{U}}_\tau:=\{\tilde{U}\}$, where each $\tilde U$ is a translation copy of $\tilde{U}_\tau$. } { By \eqref{essentially-in-U-tau} and Lemma \ref{wandering-around}, it follows that, when $(x,t)$ is fixed and $(u_1, t_1)$ lies in some  $\tilde{U}\in \tilde{\mathcal{U}}_\tau$, we only need to consider $(u_2,t_2)$ contained in some translation of $\tilde{U}_\tau$, denoted by $\tilde{U}_{x,t}$. We would like to point out that, 
 \begin{enumerate}[(i)]
     \item $\tilde{U}_{x,t}$ only depends on $\tilde{U}$, not on specific point $(u_1, t_1)\in\tilde{U}$.
     \item As $\tilde{U}$ runs over all rectangular boxes parallel to $\tilde{U}_\tau$, so does $\tilde{U}_{x,t}$.
 \end{enumerate}
   Therefore, with the estimate \eqref{upper-bound-K-theta}, we have \eqref{with-kernel}
	\begin{align*}&\lesssim R^{-3}\int_{B_R}\sum_\sigma\sum_{\tau}\sum_{\tilde{U}\in \tilde{\mathcal{U}}_\tau}\int_{\tilde{U}}\left|\sum_{\theta\in\mathbf S_{R^{-1/2}}:\,\theta\subset\tau}\int_{\tilde{U}_{x,t}}|\tilde{U}|^{-1}|\tilde{F}_\theta(u_2,t_2)|^2\,du_2\,dt_2\right|^2\,du_1\,dt_1\,dx\,dt\\&\lesssim R^{-3}\int_{B_R} \sum_\sigma\sum_{\tau}\sum_{\tilde{U}\in \tilde{\mathcal{U}}_\tau}|\tilde{U}|^{-1}\left(\int_{\tilde{U}_{x,t}}\sum_{\theta\in\mathbf S_{R^{-1/2}}:\,\theta\subset\tau}|\tilde{F}_\theta|^2\right)^2\,dx\,dt\\&= R^{-3}\int_{B_R} \sum_\sigma\sum_{\tau}\sum_{\tilde{U}\in \tilde{\mathcal{U}}_\tau}|\tilde{U}|^{-1}\left(\int_{\tilde{U}}\sum_{\theta\in\mathbf S_{R^{-1/2}}:\,\theta\subset\tau}|\tilde{F}_\theta|^2\right)^2\,dx\,dt\\&\approx \sum_\sigma\sum_{\tau}\sum_{\tilde{U}\in \tilde{\mathcal{U}}_\tau}|\tilde{U}|^{-1}\left(\int_{\tilde{U}}\sum_{\theta\in\mathbf S_{R^{-1/2}}:\,\theta\subset\tau}|\tilde{F}_\theta|^2\right)^2,\end{align*}
 where the second line follows from (i), the third line follows from (ii), and the last line follows because nothing in the integrand depends on $(x,t)$. 
 
 Finally, inside the square, we change variables back from $\tilde{F}_\theta(u,t)$ to $F_\theta(x,t)$. Since $|\tilde{U}|\approx|U|$, this quantity is approximately equal to
 $$\sum_\sigma\sum_{\tau}\sum_{U\in\mathcal U_\tau}|U|^{-1}\left(\int_{U}\sum_{\theta\in\mathbf S_{R^{-1/2}}:\,\theta\subset\tau}|F_\theta|^2\right)^2,$$
	as desired.}

	\section{Lorentz Rescaling} \label{S lorentz}

	In this section, we establish the Lorentz rescaling lemma, which plays a crucial role in the induction on scales argument. Lorentz rescaling for the cone is the analogous counterpart of parabolic rescaling for the parabola, and both of them connect estimates from different scales. To proceed, we first  formulate an equivalent yet more quantitative definition of wave envelopes.
	
	\subsection{An equivalent description of $U$}
	Let $0<s\leq  1$ be a dyadic number and fix an arc $\tau\subset S^1$ with center $\eta_\tau\in S^1$ and aperture ${\rm d}(\tau)=s$.
	There exists an orthogonal matrix $A_\tau$ such that $A_\tau {\bf e_2}=\eta_\tau$. For each $m=(m_1,m_2)\in \mathbb{Z}^2$, let $m_\tau=A_\tau(s^{-1}m_1,m_2)\in\R^2$. We define $B_{\tau,m}$ by
	\beqq
	B_{\tau,m}:=\{y=(y_1,y_2)\in \R^2: |\Pi_{\eta_\tau}(y-m_\tau)|\leq C, |\Pi_{\eta_\tau^{\perp}}(y- m_\tau)|\leq Cs^{-1}\}.
	\eeqq
	where $\Pi_{\eta_\tau},\,  \Pi_{\eta_\tau^{\perp}}$  denote the projection onto $\eta_\tau$ and $\eta_\tau^{\perp}$ respectively.
	It is then straightforward to check that the whole space $\mathbb{R}^2$ is covered by $\{B_{\tau,m}\}_m$ with finite overlaps. Define $U_{\tau,m}$ to be
	\beq\label{eq:wave}
 \begin{aligned}
	\{z: |\Pi_{\eta_\tau}(\partial_\eta\phi^\lambda(z,\eta_\tau)-Rs^2m_\tau)|&\leq CRs^2,\\& |\Pi_{\eta_\tau^{\perp}}(\partial_\eta\phi^\lambda(z,\eta_\tau)-Rs^2m_\tau)|\leq CRs,|t|\leq R\}.
 \end{aligned}
	\eeq
	One may check that $U_{\tau,m}$ gives a curved slab of length $R$ and a rectangular cross-section of size $Rs^2\times Rs$.
	
	We claim that the definition of the wave envelop in \eqref{eq:wave} is equivalent to that in \eqref{eq:w1}. Indeed, by  changing variables
	$$x\mapsto x^\lambda_\tau(u,t), t\mapsto t,$$
	the curved tube $U_{\tau,0}$ is mapped to
	\beqq
	\{(u,t):|t|\leq R, |\Pi_{\eta_\tau}(u)|\leq CRs^2, |\Pi_{\eta_\tau^{\perp}}(u)|\leq CRs\},
	\eeqq
	which corresponds to $V_\tau\times (-R,R)$. Each $U_{\tau,m}$ then corresponds to some $V \in \mathcal{V}_\tau$. Therefore our claim is true.

	We cover $B_R$ by $\{U_{\tau,m}\}_m$ so that
	$$B_R\subset \bigcup_m U_{\tau,m}, $$
	with finite overlaps.

	We consider $S(r,R,\lambda)$ as defined in \eqref{def-S}. Using the equivalent definition of the curved tube $U_\tau$, we  have, up to a rapid decay in $\lambda$,
	\begin{equation*}
	\begin{aligned}
	&\sum_{B_r\subset B_R}|B_r|^{-1}\Big\|\Big(\sum_{\theta\in \mathbf{S}_{r^{-1/2}}}|\mathcal{F}^\lambda_\theta f|^2\Big)^{1/2}\Big\|_{L^2(B_r)}^4\\\leq \ & S(r,R,\lambda)\sum_{ R^{-1/2}\leq s\leq 1}\sum_{\tau\in\mathbf S_s}\sum_{m} |U_{\tau,m}|^{-1}\Big\|\Big(\sum_{\theta \subset \tau}|\mathcal{F}^\lambda_\theta f|^2\Big)^{1/2}\Big\|_{L^2(U_{\tau,m})}^4.
	\end{aligned}
	\end{equation*}

	\subsection{Lorentz Rescaling}
	
	With the above ingredients, we are ready to establish the Lorentz rescaling lemma. First, we take $\phi(x,t,\eta)=x\cdot \eta+t|\eta|$ as an example to illustrate the idea. In this case the hypersurface associated with $x\cdot \eta+t|\eta|$ is the standard cone $(\eta, |\eta|)$. To proceed, we apply the (Euclidean) Lorentz transformation to the cone first, which will put us in a better position to perform a parabolic rescaling. 
	By the change of variables
	
	\begin{equation}
	\begin{aligned}
	\eta_1\mapsto \eta_1,\quad  \eta_2\mapsto \frac{1}{\sqrt{2}}(\eta_3-\eta_2),\quad \eta_3\mapsto \frac{1}{\sqrt{2}}(\eta_3+\eta_2),
	\end{aligned}
	\end{equation}
	the cone $(\eta,|\eta|)$ is transformed to the \textit{tilted} cone $\eta_3=\eta_1^2/2\eta_2,$ the formula of which is invariant under parabolic rescaling  in $\eta_2$ and $\eta_3$. Moreover, this cone is tangent to the $\eta_1\eta_2$-plane.
	See Figure \ref{fig2}.
	
	\begin{figure}
		\centering
		\includegraphics[width=.60\textwidth]{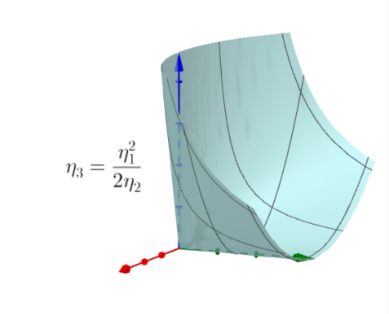}
		\caption{The cone after Lorentz transformation.}
		\label{fig2}
	\end{figure}
	
	For our case, the phase function $\phi(x,t,\eta)$  satisfies the cinematic curvature condition, which may not be translation invariant.  A significant difference is that 
	the associated cone in phase space is given by $(z,\partial_z\phi(z,\eta))$, which varies smoothly depending  on $z$. 
	Therefore the Euclidean Lorentz transformation does not work directly.
	To overcome this difficulty, we perform a change of variables with respect to the $(x,t)$-variables as displayed in Section \ref{changev}. 
	
	Denote \[\tilde{\phi}(u,t,\eta):=\phi(x_\tau(u,t),t,\eta).\]
	One can easily check that $\tilde{\phi}(u,t,\eta)$ also satisfies the cinematic curvature condition. By the discussion in Section \ref{changev}, the normal direction of the cone $ \partial_{u,t}\tilde{\phi}(u,t,\eta)$ is $(0,0,1)$ at $\eta=\eta_\tau$.  This manifests that the above change of variable plays a similar role as the Euclidean Lorentz transformation, since both of which tilt the cone so that it is tangent to the horizontal plane.  
	
	Now we restate the Lorentz rescaling lemma as follows:
	
	\begin{lemma}
		Let $r_1\leq r_2\leq r_3\leq R\leq \lambda^{1-\varepsilon}$, then
		$$S(r_1,r_3,\lambda)\leq \log r_2\, S(r_1,r_2,\lambda)\max_{r_2^{-1/2}\leq s\leq 1}S(s^2r_2,s^2r_3,s^2\lambda).$$
	\end{lemma}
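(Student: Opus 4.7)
The plan is to split the passage from the outer scale $r_3$ down to the inner scale $r_1$ into two nested applications of the square function constant, paralleling the Lorentz rescaling argument of Guth--Wang--Zhang but executed in the change-of-variables framework of Section \ref{changev}. The first factor $S(r_1, r_2, \lambda)$ will handle scales from $r_1$ up to $r_2$ at the original ambient scale $\lambda$, while the rescaled factor $S(s^2 r_2, s^2 r_3, s^2 \lambda)$ will handle scales from $r_2$ up to $r_3$ in Lorentz-rescaled coordinates. The change of variables $(x, t) \mapsto (x_\tau^\lambda(u, t), t)$ flattens the cone at the cap $\tau$ and therefore plays the role of the Euclidean Lorentz transform for this variable-coefficient setting.

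The first step is, for each sub-ball $B_{r_2} \subset B_{r_3}$, to apply the definition of $S(r_1, r_2, \lambda)$, and then sum over $B_{r_2}$. This bounds the LHS of the lemma by $S(r_1, r_2, \lambda)$ times
\begin{equation*}
\sum_{s \in [r_2^{-1/2},\, 1]}\ \sum_{d(\tau)=s}\ \sum_{U \parallel U_\tau \text{ at scale }r_2} |U|^{-1}\|S_U F\|_{L^2(w_{B_{r_3}})}^4,
\end{equation*}
and the remaining task is to absorb this sum into the analogous sum over tubes at scale $r_3$ at the cost of $\max_{s}S(s^2 r_2, s^2 r_3, s^2\lambda)\cdot \log r_2$.

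For this, fix dyadic $s \in [r_2^{-1/2},1]$ and $\tau$ with $d(\tau)=s$. Regroup the $r_2$-scale tubes $U \parallel U_\tau$ by the unique $r_3$-scale tube $U^\ast \parallel U_\tau$ that contains them. On each $U^\ast$, perform the change of variables $(x,t)\mapsto(x_\tau^\lambda(u,t),t)$ followed by the anisotropic dilation $(u_1, u_2, t)\mapsto(u_1, s u_2, s^2 t)$, together with the dual dilation in $\eta$. This composition sends $U^\ast$ to a ball $B_{s^2 r_3}$, sends each inner $r_2$-scale tube $U \subset U^\ast$ to a ball $B_{s^2 r_2}\subset B_{s^2 r_3}$, dilates $\tau$ to a full cap, and dilates each sub-cap $\theta \subset \tau$ of aperture $r_2^{-1/2}$ to a cap of aperture $(s^2 r_2)^{-1/2}$. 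The pullback $\tilde F$ is then a new FIO at ambient scale $s^2\lambda$ with a rescaled phase $\tilde \phi_s$, and $\|S_U F\|_{L^2}$ matches, up to Jacobians, $\|S_{B_{s^2 r_2}}\tilde F\|_{L^2}$ in the rescaled picture. Applying $S(s^2 r_2, s^2 r_3, s^2\lambda)$ to $\tilde F$ on $B_{s^2 r_3}$, undoing the rescaling, summing over $\tau$ with $d(\tau)=s$, and summing dyadically over $s$, which yields the $\log r_2$ factor, closes the estimate.

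The main obstacle is to verify that the rescaled phase $\tilde\phi_s$ still satisfies the non-degeneracy condition \eqref{CC1}, the cone condition \eqref{CC2}, and the quantitative bounds ${\bf C_1}$--${\bf C_3}$ with the same constants, uniformly in $s$, so that $S(s^2 r_2, s^2 r_3, s^2\lambda)$ can actually be applied to $\tilde F$. In the Euclidean case this is automatic because the Lorentz transform is tailor-made for the cone, but here the conic surface in phase space varies smoothly with $(x,t)$, and higher derivatives of $\tilde\phi_s$ could a priori blow up under the anisotropic scaling by $s$. The verification should proceed via a Taylor expansion of $\tilde\phi_\tau(u,t,\eta) = \phi^\lambda(x_\tau^\lambda(u,t),t,\eta)$ in $\eta$ about $\eta = \tau$, together with the geometric observations from Section \ref{S geometry}: in particular Lemmas \ref{down-up} and \ref{wandering-around} provide the precise control on how nearby frequency planks and spatial tubes deviate from their $\tau$-centered counterparts, ensuring that after the rescaling by $s$ the relevant higher-order terms remain $O(1)$ independently of $s$.
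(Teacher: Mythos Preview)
Your outline is the paper's argument: apply $S(r_1,r_2,\lambda)$ first, then for each cap $\tau$ of aperture $s$ straighten the $r_2$-scale tubes $U\parallel U_\tau$ via the change of variables $(x,t)\mapsto(x_\tau^\lambda(u,t),t)$, parabolically rescale so the tubes become $s^2r_2$-cubes, apply $S(s^2r_2,s^2r_3,s^2\lambda)$ at the new ambient scale, and undo the rescaling; the dyadic sum in $s$ gives the $\log r_2$. Two points need correcting.

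First, your anisotropic dilation is written on the wrong axis. After straightening, $\tilde U_\tau$ has dimensions $r_2 s\times r_2 s^2\times r_2$ in the $(u_1,u_2,t)$ directions (short edge parallel to $\tau\approx{\bf e_2}$), so one must contract the \emph{medium} and \emph{long} directions: $u_1\mapsto su_1$, $t\mapsto s^2 t$, with the dual map $\eta_1\mapsto s\eta_1$, $\eta_2\mapsto\eta_2$. Your map $(u_1,u_2,t)\mapsto(u_1,su_2,s^2t)$ does not turn the tubes into cubes.

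Second, the verification that the rescaled phase $\tilde\phi_s$ satisfies \eqref{CC1}, \eqref{CC2}, and ${\bf C_1}$--${\bf C_3}$ does not use Lemmas \ref{down-up} or \ref{wandering-around}; those lemmas track how the maps $u_\theta^\lambda$ vary with $\theta\subset\tau$ and are tailored to the Kakeya argument in Section \ref{S Kakeya}, not to uniform derivative bounds on a single rescaled phase. What is actually needed is the normalization $\nabla_\eta\tilde\phi(u,t,{\bf e_2})=u$ coming from \eqref{change-to-u}, combined with Euler's homogeneity formula and a Taylor expansion in $\eta_1/\eta_2$, which gives
\[
\tilde\phi_s(u,t,\eta)=u\cdot\eta+\tfrac12\,\partial_{\eta_1}^2\tilde\phi(su_1,s^2u_2,t,{\bf e_2})\,\eta_1^2/\eta_2+O(s).
\]
From this explicit form ${\bf C_1}$ is immediate (the $O(s)$ remainder absorbs all potentially dangerous higher derivatives uniformly in $s$), and \eqref{CC1}, \eqref{CC2} follow by directly computing $\partial_\eta\partial_u\tilde\phi_s|_{\eta={\bf e_2}}$ and $\partial_{\eta\eta}^2\partial_t\tilde\phi_s|_{\eta={\bf e_2}}$.
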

	\begin{proof}
		By the definition of $S(r_1, r_2,\lambda)$, we obtain
		\begin{align}\label{eq-Lo0}
		\begin{aligned}&\sum_{B_{r_1\subset B_R}}|B_{r_1}|^{-1}\Big\|\Big(\sum_{\theta\in \mathbf{S}_{r_1^{-1/2}}}|\mathcal F^\lambda_\theta f|^2\Big)^{1/2}\Big\|_{L^2(B_{r_1})}^4\\\leq &  S(r_1,r_2,\lambda)\sum_{r_2^{-1/2}\leq s\leq 1}\sum_{\tau\in\mathbf S_s}\sum_{m}|U_{\tau,m}|^{-1}\Big\|\Big(\sum_{\theta \subset \tau:\theta\in \mathbf{S}_{r_2^{-1/2}}}|\mathcal F^\lambda_\theta f|^2\Big)^{1/2}\Big\|_{L^2(U_{\tau,m})}^4\\&+{\rm RapDec}(\lambda)\|f\|_{L^2}^4.
		\end{aligned}\end{align}
		To complete the proof, it suffices to show that for a fixed $\tau$ with aperture $s$, we have
		\begin{align}\label{eq-Lo1}\begin{aligned}& \sum_{m}|U_{\tau,m}|^{-1}\Big\|\Big(\sum_{\theta\subset \tau:\,\theta\in\mathbf S_{r_2^{-1/2}}}|\mathcal F^\lambda_\theta f|^2\Big)^{1/2}\Big\|_{L^2(U_{\tau,m})}^4\\
		\leq & S(s^2r_2,s^2r_3,s^2\lambda)\sum_{r_3^{-1/2}\leq s'\leq s}\sum_{\substack{\tau'\subset \tau\\\tau'\in \mathbf{S}_{s' }}}\sum_{m}|U_{\tau',m}|^{-1}\Big\|\Big(\sum_{\substack{\theta\subset \tau'\\ \theta\in \mathbf{S}_{r_3^{-1/2}}}}|\mathcal F^\lambda_\theta f|^2\Big)^{1/2}\Big\|_{L^2(U_{\tau',m})}^4\\&+{\ra}(\lambda)\|f\|_{L^2}^4.
		\end{aligned}\end{align}
		In fact, if we plug \eqref{eq-Lo1} into \eqref{eq-Lo0} and notice that the number of dyadic number $s$ located in $(r_2^{-1/2},1)$ is $\log r_2$, the lemma follows.
		
		Now we prove \eqref{eq-Lo1}. For a given $\tau$,  let  $A_\tau$ be  an orthogonal matrix such that $A_\tau {\bf e_2} =\eta_\tau$.  We shall perform the following change of variables
		$$x\mapsto x_\tau^\lambda (A_\tau u,t),\ t\mapsto t, \ \eta \mapsto  A_\tau \eta.$$
		Define
		\begin{align*}
		\tilde{\F}^\lambda_\theta f(u,t):&=\int e^{i\tilde{\phi}^\lambda (u,t,\eta)}\tilde{a}^\lambda_\theta (u,t,\eta)\hat{f}(\eta)\,d\eta,\\
		\tilde{\phi}^\lambda(u,t,\eta):&=\phi^\lambda(x_\tau^\lambda(A_\tau u,t),t,A_\tau\eta), \tilde{a}_{\theta}^\lambda(u,t,\eta)=a_\theta^\lambda( x_\tau^\lambda(A_\tau u,t),t, A_\tau \eta).
		\end{align*}
		Under the new  coordinates the curved tube $U_{\tau, m}$ is straightened and given by 
		\beq\label{eq:ts}
		\tilde{U}_{\tau, m}:=\{(u,t):|t|\leq r_2, |u_2-r_2s^2m_2|\leq r_2s^2, |u_1-r_2sm_1|\leq r_2s\},
		\eeq
		and the left hand side of \eqref{eq-Lo1} is transformed to
		\beq\label{eq:a2}
		\sum_{m} |\tilde{U}_{\tau,m}|^{-1}\Big\|\Big(\sum_{\theta\subset \tau:\theta\in \mathbf{S}_{r_2^{-1/2}}}|\tilde{\F}^\lambda_\theta f|^2\Big)^{\f{1}{2}}\Big)\Big\|_{L^2(\tilde{U}_{\tau,m})}^4.
		\eeq
		
		Next, we perform parabolic rescaling
		$$\eta_1\mapsto s\eta_1,\ \eta_2\mapsto \eta_2,\  u_1\mapsto s^{-1}u_1, \ u_2\mapsto u_2,\  t\mapsto s^{-2}t.$$
		Define $\tilde\phi_s$  and $\bar a$ to be
		\begin{equation*}
		\begin{aligned}
		\tilde\phi_s(u,t,\eta):=s^{-2}\tilde{\phi}(su_1,s^2u_2,t,s\eta_1,\eta_2),\\
		\bar a_\theta(u,t,\eta):=\tilde{a}_\theta (su_1,s^2u_2,t,s\eta_1,\eta_2).
		\end{aligned}
		\end{equation*}

		Correspondingly, by \eqref{eq:ts}, $\tilde{U}_{\tau,m}$ is transformed to a cube $Q_m$ of side-length $ r_2s^2$, namely
		\beqq
		Q_m=\{(u,t), |t|\leq r_2s^2:|u_2-r_2s^2m_2|\leq r_2s^2, |u_1-r_2s^2m_1|\leq r_2s^2 \}.
		\eeqq
		Then \eqref{eq:a2} becomes
		\beqq
		\sum_{m}|Q_m|^{-1}\Big\|\Big(\sum_{\theta\subset \tau:\,\theta\in \mathbf{S}_{r_2^{-1/2}}}|\bar {\F}^{\lambda s^2}_\theta  \bar f|^2\Big)^{\f{1}{2}}\Big)\Big\|_{L^2(Q_m)}^4,
		\eeqq
		where $\bar{\F}^{\lambda s^2}_\theta \bar f$ is defined by 
		
		$$\bar{\F}^{\lambda s^2}_\theta\bar f(u,t):=\int e^{i\tilde\phi_s^{\lambda s^2}(u,t,\eta)}\bar{a}^{\lambda s^2}_\theta (u,t,\eta)\hat{\bar f}(\eta)\,d\eta \quad \text{with}\;\; \hat{\bar f}(\eta):=\hat{f}(s\eta_1,\eta_2).$$

		We claim that the phase function $\tilde\phi_s$ satisfies our assumptions on the phase functions. Assuming this, by the definition of $S(s^2r_2, s^2 r_3, s^2\lambda)$, we have,
		\begin{multline}
		\sum_{m}|Q_m|^{-1}\Big\|\Big(\sum_{\theta\subset \tau:\theta\in \mathbf{S}_{r_2^{-1/2}}}|\bar {\F}^{\lambda s^2}_\theta  \bar f|^2\Big)^{\f{1}{2}}\Big)\Big\|_{L^2(Q_m)}^4\\
		\leq S(s^2r_2, s^2 r_3, s^2\lambda)\sum_{r_3^{-1/2} \leq s'\leq s}\sum_{\substack{\tau'\subset \tau\\\tau'\in \mathbf{S}_{s'}}}\sum_m |U_{\tau',m}|^{-1}\cdot\\\Big\|\Big(\sum_{\theta'\subset \tau':\theta'\in \mathbf{S}_{r_3^{-1/2}}}|\bar{\F}^{\lambda s^2}_{\theta'}  \bar f|^2\Big)^{\f{1}{2}}\Big)\Big\|_{L^2(U_{\tau',m})}^4 +{\rm RapDec}(\lambda)\|f\|_{L^2}^4 .
\end{multline}
		Note that  $U_{\tau',m}$ is defined by
		\begin{multline*}
		U_{\tau',m}:=\{(u,t),|t|\leq r_3s^2: |\Pi_{\eta_{\bar{\tau}'}}(\partial_\eta \tilde\phi_s^{\lambda s^2}(u,t,\eta_{\bar{\tau}'})-r_3s^2m_{\bar \tau'})|\leq r_3(s')^2, \\ |\Pi_{\eta_{\bar{\tau}'}^{\perp}}(\partial_\eta \tilde\phi_s^{\lambda s^2}(u,t,\eta_{\bar{\tau}'})-r_3s^2m_{\bar \tau'})|\leq r_3 s s'\},
		\end{multline*}
		with  $\eta_{\bar{\tau}'} \in S^1$ satisfying
		$$\eta_{{\tau}'}\parallel A_\tau  \begin{pmatrix}s & 0\\0&1\end{pmatrix} \eta_{\bar{\tau}'},$$ 
		and $m_{\bar\tau'}:=A_{\bar \tau'}(s/s'm_1,m_2)$. After changing back to the original variables, we obtain  \eqref{eq-Lo1}.

		To conclude this section, it remains to check that $\tilde\phi_s$ satisfies our assumptions on the phase  and the amplitude functions, that is, $\tilde\phi_s$ satisfies the quantitative conditions ${\bf {\bf C_2}}$ and  the cinematic curvature  condition and the conditions  ${\bf {\bf C_1}}$, ${\bf {\bf C_3}}$ on the amplitude functions.
		
		It is easy to verify the conditions ${\bf {\bf C_1}}$, ${\bf {\bf C_3}}$. Indeed, one can employ the following change of variables
		\beqq
		(u,t)\mapsto (u,t)/A, \eta\mapsto \eta/A,
		\eeqq
		where $A>0$ is a sufficiently large absolute constant, and then decompose the enlarged domain into a finite number of smaller ones. Thus, it suffices to verify the condition ${\bf {\bf C_2}}$.

		For the new phase function $\tilde \phi_s$, by homogeneity, we have   \beq\label{change3}
		s^{-2}\tilde{\phi}(su_1,s^2 u_2,t,s\eta_1,\eta_2)=s^{-2}\eta_2\tilde{\phi}(su_1,s^2 u_2,t,s\eta_1/\eta_2,1).
		\eeq
		Using Taylor's formula, we get 
		\beq\label{eq:change2}
		\begin{aligned}
			\tilde{\phi}(su_1,s^2 u_2,t,s\eta_1/\eta_2,1)&=\tilde{\phi}(su_1,s^2 u_2,t,{\bf e_2})+s\partial_{\eta_1}\tilde{\phi}(su_1,s^2 u_2,t,{\bf e_2})\eta_1/\eta_2\\&
			+1/2s^2\partial_{\eta_1}^2\tilde{\phi}(su_1,s^2 u_2,t,{\bf e_2})(\eta_1/\eta_2)^2+O(s^3(\eta_1/\eta_2)^3).
		\end{aligned}
		\eeq
		By Euler's formula, we obtain 
		\beq\label{eq:change1}
		\tilde{\phi}(su_1,s^2 u_2,t,{\bf e_2})=\partial_{\eta_2}\tilde{\phi}(su_1,s^2 u_2,t,{\bf e_2}).
		\eeq
		Taking \eqref{eq:change1} and \eqref{eq:change2} into
		\eqref{change3} and invoking \eqref{change-to-u}, we have 
		\beqq
		\tilde{\phi}_s=s^{-2}\tilde{\phi}(su_1,s^2 u_2,t,s\eta_1,\eta_2)=u\cdot \eta+\frac{1}{2}\partial_{\eta_1}^2\tilde{\phi}(su_1,s^2 u_2,t,{\bf e_2})\eta_1^2/\eta_2+O(s(\eta_1^3/\eta_2^2)).
		\eeqq
		where the implicit constant in the big $O$ only depends on a finite number of derivatives of $\phi$.  Therefore, the condition ${\bf {\bf C_2}}$ can be ensured. 
		
		Finally, let us check that $\tilde\phi_s$ satisfies the cinematic curvature condition. 
		\begin{equation*}
		\tilde\phi_s(u,t,\eta)=s^{-2}\phi(x_\tau(A_\tau(su_1,s^2u_2),t),t,A_\tau(s\eta_1,\eta_2)).
		\end{equation*}
		By the homogeneity of $\phi$, it is clear that $\tilde\phi_s(u,t,\eta)$ is homogeneous of degree $1$ with respect to $\eta$. Since $A_\tau {\bf e_2}=\eta_\tau$, we have
		\begin{equation}\label{eq:lo12}
		\begin{aligned}
		s^{-2}\partial_\eta \tilde\phi_s(u,t,{\bf e_2})&=s^{-2}\begin{pmatrix}s & 0\\0&1\end{pmatrix}A_\tau^{t}\partial_\eta\phi(x_\tau(A_\tau(su_1,s^2u_2),t),t,\eta_\tau)\\
		&=s^{-2}\begin{pmatrix}s & 0\\0&1\end{pmatrix}A_\tau^t A_\tau \begin{pmatrix}s & 0\\0&s^2\end{pmatrix}\begin{pmatrix}u_1 \\u_2 \end{pmatrix} =\begin{pmatrix}u_1 \\u_2 \end{pmatrix}.
		\end{aligned}
		\end{equation}
		Therefore
		$$s^{-2}\partial_{\eta}\partial_u\tilde\phi_s(u,t,{\bf e_2})=\begin{pmatrix}1 & 0\\0&1\end{pmatrix},$$
		and then the non-degeneracy condition \eqref{CC1} follows from the continuity of the phase function and our assumption of smallness on the support of $a(x,t,\eta)$.

		Now we verify the curvature condition \eqref{CC2} of the phase function $\tilde\phi_s$. The two tangent vectors of the  hypersurface $\{\partial_{(u,t)}\tilde\phi_s(u,t,\eta):\eta\in {\rm supp}\, a^\lambda(z,{}\cdot{})\}$ at ${\bf e_2}$ are $$\partial_{\eta_1}\partial_{(u,t)}\tilde\phi_s(u,t,{\bf e_2}),\ \partial_{\eta_2}\partial_{(u,t)} \tilde\phi_s(u,t,{\bf e_2}).$$ 
		
		Recall from \eqref{eq:lo12} that we have normalized the normal vector to the hypersurface $\{\partial_{(u,t)}\tilde\phi_s(u,t,\eta):\eta\in {\rm supp}\, a^\lambda(z,{}\cdot{})\}$ at ${\bf e_2}$ to be ${\bf e_3}$.  A simple  calculation gives that
		\begin{equation}\label{eq-Lo3}
		\partial_{\eta\eta}^2\langle \partial_{(u,t)}\tilde\phi_s(u,t,\eta),{\bf e_3}\rangle|_{\eta={\bf e_2}}=\partial_{\eta\eta}^2\partial_t\tilde\phi_s(u,t,\eta)|_{\eta={\bf e_2}}.
		\end{equation}
  Since 
  \beqq
\partial_{\eta\eta}^2\partial_t\tilde{\phi}_s(u,t,\eta)|_{\eta={\bf e_2}}=B^{t}\partial_{\eta\eta}^2\partial_t\phi(x_\tau(A_\tau(su_1,s^2u_2),t),t,\eta_\tau) B,
  \eeqq
  where 
  $$B=A_\tau \begin{pmatrix}s & 0\\0&1\end{pmatrix}.$$
		
		Also, note that 
		\beq\label{eq:nond}
		{\rm rank}\,\partial_{\eta\eta}^2\langle\partial_{(u,t)}\phi(x_\tau(u,t),t,\eta_\tau),{\bf e_3}\rangle={\rm rank}\,\partial_{\eta\eta}^2\partial_t\phi(x_\tau(u,t),t,\eta_\tau)=1.
		\eeq
		Finally, by \eqref{eq:nond},\eqref{eq-Lo3} and the non-degeneracy of $B$, we have $${\rm rank}\,\partial_{\eta\eta}^2\partial_t\tilde\phi_s(u,t,\eta)|_{\eta={\bf e_2}}=1.$$ The proof of the claim is complete.
	\end{proof}

	\section{Appendix}

	\subsection{Theorem \ref{sq} implies local smoothing estimate.} For the reader's convenience, we provide 
	details on how to get a local smoothing estimate from  Theorem \ref{sq} via an argument from \cite{MSSJ}. 
	
	Let $\theta \subset S^1$ be an arc of aperture $ \lambda^{-1/2}$. By a standard Littlewood-Paley decomposition, Conjecture \ref{Conj. 3} can be deduced from \eqref{eq:34}, which says
	\begin{align*}
	\|\mathcal{F}^\lambda f\|_{L^{ 4}(\R^{3})}\leq C_\varepsilon \lambda^{1/4+\varepsilon}\|f\|_{L^{4}(\R^2)}.
	\end{align*}

	Assuming the square function estimate
	\beqq
	\Big\|\sum_{\theta} \F_\theta^\lambda  f \Big\|_{L^{ 4}(\mathbb{R}^{3})}\leq_\varepsilon \lambda^{\varepsilon} \Big\|\Big(\sum_\theta |\F^\lambda_\theta f|^2\Big)^{1/2}\Big\|_{L^{4}(\R^{3})}, \eeqq
	one can see that to establish \eqref{eq:Conj. 3} it suffices to prove
	\beqq
	\Big\|\Big(\sum_\theta |\F^\lambda_\theta f|^2\Big)^{1/2}\Big\|_{L^{ 4}(\R^{3})}\leq_\varepsilon \lambda^{1/4+\varepsilon}\|f\|_{L^{ 4}(\R^2)}.
	\eeqq
	
	To this end,  we use $K_\theta^\lambda$ to denote the kernel of the operator $\F^\lambda_\theta $, i.e. 
	\beqq
	K_\theta^\lambda (x,t,y):=\int_{\R^2} e^{i(\phi^\lambda(x,t,\eta)-y\cdot\eta)} \chi_\theta(\eta)\,d\eta.
	\eeqq
	By integration by parts in $\eta$, we have for each $N\in \mathbb N$ 
	\beq \label{eq:Niko}
	|K_\theta^\lambda  (x,t,y)|\lesssim_N\lambda^{-1/2}  \big(1+|\Pi_{\eta_\theta}(\partial_\eta\phi^\lambda(z,\eta_\theta)-y)|+\lambda^{-1/2}|\Pi_{\eta_\theta^{\perp}}(\partial_\eta\phi^\lambda(z,\eta_\theta)-y)|\big)^{-N}.
	\eeq
	
	By  Riesz's representation theorem, we have
	\beqq
	\Big\|\Big(\sum_\theta |\F^\lambda_\theta f|^2\Big)^{1/2}\Big\|_{L^{4}(\R^{3})}^2=\sup_{\|g\|_{L_{x,t}^2(\R^{3})=1}}\int_{\R^{3}} \sum_{\theta}|\F^\lambda_\theta f(x,t)|^2 g(x,t) \,dx\,dt.
	\eeqq
	Then, by H\"older's inequality
	\begin{align*}
	\quad &\int_{\R^{3}} \sum_{\theta }|\F^\lambda_\theta f(x,t)|^2 g(x,t) \,dx\,dt\\
	&=\int_{\R^3} \sum_\theta \Big|\int_{\R^2} K_\theta^\lambda (x,t,y)f_\theta(y)\,dy\Big|^2 g(x,t)\,dx\,dt\\
	&\lesssim \int  \sum_\theta\left( \int_{\R^2}|K_\theta^\lambda(x,t,y)|\,dy\right)\left( \int |K_\theta^\lambda(x,t,y)| |f_\theta(y)|^2 \,dy\right) |g(x,t)|\,dx\,dt\\
	&\lesssim  \left(\sup_{\theta,x,t}\int |K_\theta^\lambda(x,t,y)|\,dy\right)\int_{\R^2}\Big(\sup_\theta\int |K_\theta^\lambda(x,t,y)||g(x,t)|\,dx\,dt\Big)\sum_\theta |f_\theta|^2 \,dy\\
	&\lesssim \int_{\R^2} \left(\sum_{\theta} |f_\theta(y)|^2\right)\left( \sup_{\theta}\int_{\R^{3}}|K_\theta^\lambda (x,t,y)g(x,t)|\,dx\,dt\right)\,dy\\
	&\lesssim \Big\|\sum_{\theta}|f_\theta|^2\Big\|_{L^2(\R^2)}\Big\|\sup_{\theta}\int_{\R^{3}}|K_\theta^\lambda  (x,t,y)g(x,t)|\,dx\,dt\Big\|_{L^2(\R^2)},
	\end{align*}
	where in the second to last line, we use \eqref{eq:Niko} to obtain
	\beqq
	\sup_{\theta,x,t}\int |K_\theta^\lambda(x,t,y)|\,dy\lesssim 1.
	\eeqq

	Then the following two inequalities complete the proof.
	
	\noindent{\bf Square function inequality }
	\beq\label{eqwww}
	\Big\| \Big(\sum_{\theta}|f_\theta|^2\Big)^{\f{1}{2}}\Big\|_{L^p(\R^2)}\lesssim \lambda^\varepsilon \|f\|_{L^p(\R^2)},\;\;\; 2\leq p\leq 4.
	\eeq
	
	\noindent {\bf  Nikodym maximal function }
	\beq \label{w10}
	\Big\|\sup_{\theta}\int_{\R^{3}}|K_\theta^\lambda (x,t,y)g(x,t)|\,dx\,dt\Big\|_{L^2(\R^2)}\lesssim_\varepsilon \lambda^{1/2+\varepsilon}.
	\eeq
	
	\eqref{eqwww} can be found in \cite{Cor}. It remains to check \eqref{w10}. One can see from \eqref{eq:Niko} that, when $\theta, y$ are fixed, the essential support of $K_\theta^\lambda$ is contained in a curved tube $T_{\theta,y}$ defined by 
	\beqq
	T_{\theta,y}:=\{(x,t):|t|\leq \lambda , |\Pi_{\eta_\theta}(\partial_\eta\phi^\lambda(x,t,\eta_\theta)-y)|\leq 1, |\Pi_{\eta_\theta^{\perp}}(\partial_\eta\phi^\lambda(x,t,\eta_\theta)-y)|\leq \lambda^{1/2}\}.
	\eeqq
	Thus, we may view \eqref{w10} as a Nikodym-type maximal function estimate, in the sense that 
	\beq \label{w12a}
	\Big(\int_{\R^2}\sup_\theta \Big|\f{1}{|T_{\theta,y}|}\int_{T_{\theta,y}}|g(x,t)|\,dx\,dt\Big|^2 \,dy\Big)^{\f{1}{2}}\lesssim_\varepsilon  \lambda^{-1/2+\varepsilon }\|g\|_{L^2(\R^3)}.\eeq
	\eqref{w12a}, in turn,  is a direct consequence of the following  Nikodym maximal function estimate from \cite{MSSJ}.

	\begin{theorem}\cite{MSSJ}
		\label{conN}
		Assume $1\leq R\leq \lambda$ and $\phi$ satisfies the \it{cinematic curvature} condition.     Let  tube ~$\mathcal{T}_{\theta,y}$ be defined as follows
		\begin{align*}
		\mathcal{T}_{\theta,y}:=\big\{(x,t)\in B_R(0) \times[0,R]: |(\partial_\eta\phi^\lambda(x,t,\eta_\theta)-y)|\leq
		1\big\},
		\end{align*}
		then
		\beq \label{w12}
		\Big(\int_{\R^2}\sup_\theta \Big|\f{1}{|\mathcal{T}_{\theta,y}|}\int_{\mathcal{T}_{\theta,y}}|g(x,t)|\,dx\,dt\Big|^2 \,dy\Big)^{\f{1}{2}}\lesssim_\varepsilon  R^{-1/2+\varepsilon }\|g\|_{L^2(\R^3)}.
		\eeq
	\end{theorem}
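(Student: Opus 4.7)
The plan is to follow a Córdoba–Fefferman $L^2$ orthogonality argument (in the spirit of the Mockenhaupt–Seeger–Sogge proof for light rays), linearize the supremum and apply $TT^*$, extract pairwise tube intersection bounds from cinematic curvature, and then close with a refined combinatorial counting of bush/hairbrush type.

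First I would linearize by selecting a measurable $\theta\colon \mathbb{R}^2\to S^1$ that (nearly) saturates the inner supremum, so \eqref{w12} reduces to the $L^2$ bound
\[ \Big(\int_{\mathbb R^2} |M_{\theta(y)} g(y)|^2\,dy\Big)^{1/2}\lesssim_\varepsilon R^{-1/2+\varepsilon}\|g\|_{L^2(\mathbb R^3)},\quad M_\theta g(y):=|\mathcal T_{\theta,y}|^{-1}\!\!\int_{\mathcal T_{\theta,y}} |g|. \]
The price of linearization is at most a $\log R$ factor (after discretizing $\theta$ at scale $R^{-1/2}$, with $\sim R^{1/2}$ discrete directions). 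By the $TT^*$ method this is equivalent to controlling the Schur norm of the kernel $K(y_1,y_2):= R^{-2}|\mathcal T_{\theta(y_1),y_1}\cap \mathcal T_{\theta(y_2),y_2}|$ by $R^{-1+2\varepsilon}$.

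Second, I would establish the pairwise intersection bound
\[ |\mathcal T_{\theta_1,y_1}\cap \mathcal T_{\theta_2,y_2}|\lesssim\min\bigl(R,\,|\theta_1-\theta_2|^{-1}\bigr), \]
valid whenever the two tubes are not essentially coincident. Using the non-degeneracy \eqref{CC1} and the implicit function theorem, each tube is parameterized by the time slab $t\in[0,R]$ as $x=x(\theta,y,t)$ solving $\partial_\eta\phi^\lambda(x,t,\theta)=y$. The relative velocity $\dot x(\theta_1,y_1,t)-\dot x(\theta_2,y_2,t)$ is $\sim|\theta_1-\theta_2|$ by first-order Taylor expansion, and the cinematic curvature condition \eqref{CC2} certifies that this rate is genuinely nonzero (not merely accidentally). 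Consequently the cross-section centers separate linearly in $|t-t_0|$ about the collision time $t_0$, forcing the common time interval to have length $\lesssim|\theta_1-\theta_2|^{-1}$ and unit cross-section.

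Third, I would insert this intersection bound into the Schur sum, decompose dyadically by $|\theta(y_1)-\theta(y_2)|\sim 2^{-k}$, and sum. To extract the missing power of $R$, one notes that for fixed $y_1$ the admissible $y_2$'s at angular scale $2^{-k}$ are geometrically constrained: $y_2$ must lie within distance $1$ of the image of $\mathcal T_{\theta(y_1),y_1}$ under the map $(x,t)\mapsto\partial_\eta\phi^\lambda(x,t,\theta_2)$, a set whose measure is controlled by the $2^{-k}$-linearization of this image. A hairbrush/bush argument in the style of Bourgain–Wolff — carried out after first straightening the tubes via the change of variables of Section~\ref{changev}, so that the essential combinatorics reduces to the flat light-ray case — then closes the sum with the desired $R^\varepsilon$ saving.

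\textbf{The main obstacle} lies in Step 3: a naive pairwise Córdoba–Schur sum only yields a Schur norm of size $O(\log R)$ rather than the required $O(R^{-1+\varepsilon})$, so one must exploit a multilinear/combinatorial refinement (bush or hairbrush) to extract an additional $R^{-1}$ saving. Carrying this refinement through in the variable-coefficient setting requires the quantitative derivative bounds ${\bf C_1}$–${\bf C_3}$ to guarantee that the Taylor-linearized geometry faithfully models the straight-tube case uniformly over the class of admissible phases; once this reduction to the translation-invariant setting is in place, the remaining steps — linearization, $TT^*$, and the pairwise bound from \eqref{CC1}, \eqref{CC2} — are essentially mechanical.
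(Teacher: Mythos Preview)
The paper does not prove this theorem at all: it is quoted as a black-box result from Mockenhaupt--Seeger--Sogge \cite{MSSJ} and is only \emph{applied} in the Appendix to deduce \eqref{w12a} from \eqref{w12}. There is therefore no ``paper's own proof'' to compare your proposal against.

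A brief comment on your sketch nonetheless. The first two steps (linearization of the supremum, $TT^*$, and the pairwise intersection bound $|\mathcal{T}_{\theta_1,y_1}\cap \mathcal{T}_{\theta_2,y_2}|\lesssim\min(R,|\theta_1-\theta_2|^{-1})$ coming from the cinematic curvature hypothesis) are indeed the core geometric ingredients of the original MSS argument. You are also right that a completely naive Schur test on the kernel $K(y_1,y_2)$ loses the required factor of $R^{-1}$. However, invoking a Bourgain--Wolff bush/hairbrush mechanism to recover it is substantial overkill: those tools are designed for Kakeya problems with an $(n-1)$-parameter family of directions or for $L^p$ bounds with $p>2$. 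Here the directions lie on a one-parameter curve (the cinematic curvature condition), and the $L^2$ bound follows from a more elementary orthogonality/duality argument already present in \cite{MSSJ}; no multilinear combinatorics is needed. So your Step~3 would work in principle but is a considerably heavier hammer than the problem requires, and the reduction you describe (straightening tubes and importing flat hairbrush bounds) would itself need justification that is more delicate than the direct MSS argument.
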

	
	It is not hard to see that \eqref{w12} implies \eqref{w12a}. Indeed, one may decompose $T_{\theta,y}$ into a collection of finitely-overlapping tubes $\{\mathcal{T}_{\theta, y'}\}_{y'}$ such that 
	$$T_{\theta,y}\subset \bigcup\limits_{y'}\mathcal{T}_{\theta,y'}.$$
	Therefore,
	
	\beq
	\f{1}{|T_{\theta,y}|}\int_{T_{\theta,y}}|g(x,t)|\,dx\,dt\leq {\rm Avg}_{y'} \f{1}{|\mathcal{T}_{\theta,y'}|}\int_{\mathcal{T}_{\theta,y'}}|g(x,t)|\,dx\,dt
	\eeq
	where  ${\rm Avg}_{n}$  is defined by 
	$$ {\rm Avg}_{n}a_n:= \frac{1}{ \#\{n\}} \sum_n a_n.$$
	
	By Minkowski's inequality, \eqref{w12a} follows from \eqref{w12}. 
	
	Finally, we remark that the higher dimensional analog of the above Nikodym maximal function estimate \eqref{w12} is an interesting open problem. Nevertheless, as was shown in \cite{BHS} (\cite{MS} for manifolds), in higher dimensions, we, in general, do not have favorable variable coefficient Nikodym maximal function estimates, and therefore going from square function estimates to local smoothing bounds is more subtle in higher dimensions.
	{ \subsection{Finite overlapping of the planks for a general cone}
Let $f:[0,1]\rightarrow \mathbb{R}$ be 	a smooth function with $|f'|\leq C_1$ and $9/10\leq f''\leq 11/10$, where $C_1>0$ is an absolute constant.  We shall use $f$ to model (a local piece of) a general cone with its curvature bounded above and below. Define the cone $$\mathcal{C}(s,t):=\{(st, sf(t),s): 0\leq s\leq 1 , 0\leq t\leq 1\}.$$
Let \begin{equation*}
	\mathbf{C}(t):=\frac{(t,f(t),1)}{\sqrt{1+t^2+f(t)^2}},\quad \mathbf{T}(t):=\frac{(1,f'(t),0)}{\sqrt{1+|f'(t)|^2}},\quad \mathbf{N}(t):=\frac{\mathbf{C}(t)\times \mathbf{N}(t)}{|\mathbf{C}(t)\times \mathbf{N}(t)|}.
	\end{equation*}
Let $r^{-1}<\sigma \leq 1$ and $0\leq i\leq \tfrac{1}{2}\sigma r$ with $i\in \mathbb{N}$. Define  $t_i:=(i+\tfrac{1}{2})\sigma^{-1}r^{-1},$.  Define the planks 
$$\mathbf{\Theta}(\sigma,t_i):=\{\xi \in \mathbb{R}^3: |\mathbf{C}(t_i)\cdot \xi|\leq \sigma^2, |\mathbf{T}(t_i)\cdot \xi|\leq r^{-1}\sigma, |\mathbf{N}(t_i)\cdot \xi|\leq r^{-2}\}.$$
Let 
$$\mathbf{\Theta}_\sigma:=\bigcup_{i}\mathbf{\Theta}(\sigma,t_i).$$

Now we are ready to state a generalization of \cite[Lemma 4.2] {GWZ} for cone $\mathcal C$.

\begin{lemma}\label{general cone}
	If $\xi \in \mathbf{\Theta}_\sigma \backslash \mathbf{\Theta}_{\sigma/2}$, then there exist at most $C_0$ planks which are contained in $\mathbf{\Theta}_{\sigma}$ overlapping at $\xi$, where $C_0$ is an absolute constant that only depends on $f$.
	\end{lemma}	
	\begin{proof}
It is easy to verify the case when $\xi $ is in  the annulus $$A_\sigma:=\{\xi: \frac{\sigma^2}{4}\leq |\xi|\leq \sigma^2\},$$ the planks $\{\mathbf{\Theta}(\sigma,t_i)\cap A_\sigma\}_i$ are finitely overlapping.  It remains to consider the case when $|\xi|\leq \frac{\sigma^2}{4}$.

If $|\xi|\leq \sigma^2/4$ and $\xi \in \mathbf{\Theta}_\sigma \backslash \mathbf{\Theta}_{\sigma/2}$, then $\xi \in \bigcup\limits_{i} \widetilde{\mathbf{\Theta}}(\sigma,t_i)$,
where $$\widetilde{\mathbf{\Theta}}(\sigma,t_i):=\{\xi \in \mathbb{R}^3: |\mathbf{C}(t_i)\cdot \xi|\leq \sigma^2/4, cr^{-1}\sigma\leq |\mathbf{T}(t_i)\cdot \xi|\leq r^{-1}\sigma, |\mathbf{N}(t_i)\cdot \xi|\leq r^{-2}\},$$
	and $c$ is a fixed small constant.  

 Fix a point $\xi=(\xi_1,\xi_2,s_0)\in\bigcup\limits_{i} \widetilde{\mathbf{\Theta}}(\sigma,t_i)$ on the plane $\xi_3=s_0\approx\sigma^2$. By the strict convexity of the curve $\mathcal{C}(s_0,t)$, we know that there are at most two lines $\ell_j,\ j=1,2$ that are tangent to $\mathcal{C}(s_0,t)$ at $\mathcal{C}(s_0,\tau^\xi_j)$ for some $\tau^\xi_j\in[0,1],\ j=1,2.$ Without loss of generality, by possibly extend the domain of $f$, we may assume that there are exactly two such lines. 
 
 It now suffices to show that if a line $\ell_k$ that is tangent to $\mathcal{C}(s_0,t)$ at $\mathcal{C}(s_0,t_{i_k})$ and it passes through $B_{r^{-2}}(\xi)$, then $|t_{i_k}-\tau^\xi_1|\leq C\sigma^{-1}r^{-1}$ or $|t_{i_k}-\tau^\xi_2|\leq C\sigma^{-1}r^{-1}$ for some absolute constant $C$. In fact, it is elementary to check that if $|\xi- \tilde\xi|\le r^{-2}$, then $|\tau^\xi_j-\tau^{\tilde\xi}_j|\le C \sigma^{-1}r^{-1}$ for an absolute constant $C$ that only depends on the cone. See Figure \ref{fig3}.
  \begin{figure}
		\centering
		\includegraphics[width=0.8\textwidth]{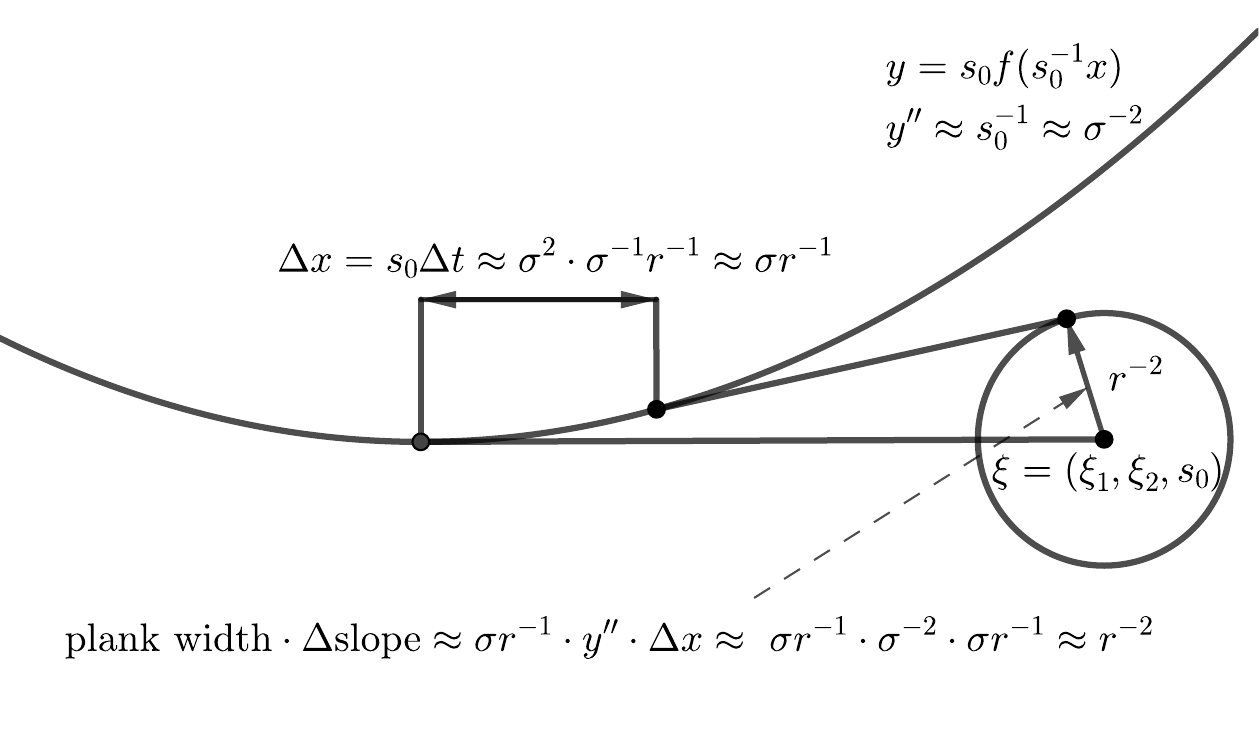}
		\caption{Tangent lines passing through $B_{r^{-2}}(\xi).$}
		\label{fig3}
	\end{figure}
	\end{proof}
	}

	\bibliography{F}{}
	\bibliographystyle{plain}

\end{document}